\documentclass{amsart}
\usepackage[utf8]{inputenc}
\usepackage{graphicx,amsmath,epsfig,amssymb,amsfonts,amsthm,enumerate,verbatim,todonotes,hyperref,mathtools, dsfont}
\usepackage{mathscinet}
\newtheorem{theorem}{Theorem}[section]
\newtheorem{corollary}[theorem]{Corollary}
\newtheorem{lemma}[theorem]{Lemma}

\newtheorem{conjecture}[theorem]{Conjecture}
\newtheorem{question}[theorem]{Question}

\newtheorem{fact}{Fact}[section]


\theoremstyle{definition}
\newtheorem{definition}[theorem]{Definition}
\newtheorem{claim}[theorem]{Claim}

\theoremstyle{remark}
\newtheorem{remark}[theorem]{Remark}

\newcommand{\restrict}{\mathord{\upharpoonright}}
\newcommand{\restr}[2]{#1\upharpoonright {#2}}
\newcommand{\orpair}[2]{\langle #1,#2 \rangle}

\newenvironment{proofsketch}[1][Proof sketch]{\textbf{#1.} }{\hfill \rule{0.5em}{0.5em}}
\newenvironment{proofclaim}[1][Proof of the claim]{\textbf{#1.} }{\hfill \rule{0.5em}{0.5em}}
\title{$\mathsf{SOCA}$ and $\mathsf{OGA}$ for $\mathsf{HL}$ spaces with strong properties}

\author[Corona-Garc\'{\i}a]{J. A. Corona-Garc\'{\i}a}
\address{Posgrado Conjunto en Ciencias Matem\'aticas UNAM-UMSNH\\Morelia, Michoac\'an\\ M\'exico 58089}
\curraddr{}
\email{jcorona@matmor.unam.mx}
\thanks{{The first author's research has been supported by CONACYT Scholarship 298332, and by the PAPIIT grant IN104419.
}}
\author[Ongay-Valverde]{I. Ongay-Valverde}
\address{Department of Mathematical and Computational Sciences, University of Toronto-Mississaugua, Mississaugua, Ontario, Canada, L5L 1C6}
\curraddr{}
\email{ivan.ongay.valverde@gmail.com}
\thanks{The second author was supported by NSERC grant A-7354}
\author[Ramos-Garc\'{\i}a]{U. A. Ramos-Garc\'{\i}a}
\address{Centro de Ciencias Matem\'aticas\\ Universidad Nacional Aut\'onoma de M\'exico\\ Campus Morelia\\Morelia, Michoac\'an\\ M\'exico 58089}
\curraddr{}
\email{ariet@matmor.unam.mx}
\thanks{The third author was partially supported by the PAPIIT grants IN104419 and IN108122.}

\subjclass[2010]{Primary 54D20, 03E65; Secondary 54A35, 03E15}

\date{\today}

\keywords{Open Coloring Axiom, Semi Open Coloring Axiom, submetrizable space, strong Choquet space, {$\epsilon$}-space}

\begin{document}
\maketitle
\begin{abstract}
We study open colorings in certain classes of hereditary Lindel\"{o}f ($\mathsf{HL}$) spaces and submetrizable spaces. In particular, we show that the definible version for the Open Graph Axiom ($\mathsf{OGA}$) holds for the class of $\mathsf{HL}$ strong Choquet submetrizable spaces extending a well-known result of Feng \cite{MR1113695}. Furthermore, we show the consistency of the Open Graph Axiom for regular spaces that have countable spread and it's square also has it, reaching closer to a well known conjecture of Todor\v{c}evi\'{c} \cite{MR980949}: ``It is consistent that all regular spaces with countable spread satisfy $\mathsf{OGA}$''.
\end{abstract}
\section{Introduction}
Given $X$ a topological space, the Open Graph Axiom of $X$ ($\mathsf{OGA}(X)$) is the assertion that for every partition $[X]^{2}=K_{0}\cup K_{1}$ of (unordered) pairs from $X$ with $K_{0}$ open in the product topology, either there is an uncountable $K_{0}$-homogeneous subset of $X$ or $X$ is the union of countably many $K_{1}$-homogeneous sets \cite{MR1486583}. The Open Graph Axiom ($\mathsf{OGA}$, previously known as $\mathsf{OCA}$ or $\mathsf{OCA}_{[T]}$) is the assertion that if $X$ is a separable metric space, then $\mathsf{OGA}(X)$ holds. $\mathsf{OGA}$ is a consequence of the Proper Forcing Axiom ($\mathsf{PFA}$) \cite{MR980949} and since its introduction in \cite{MR980949} by Todor\v{c}evi\'{c}, it has found a wealth of applications to problems closely related to separable metric spaces (see \cite{MR1752102}, \cite{MR1711328}, \cite{MR2776359}, \cite{MR1900882}, \cite{MR980949}, \cite{MR1999941}, \cite{MR1233818}, and \cite{MR1202874}).

\medskip

One of the open problems related to generalizations of $\mathsf{OGA}$, stated as a conjecture in \cite{MR980949} (repeated in \cite{MR1233818}), asks to weaken the topological assumptions on the space $X$ to, essentially, the best possible:

\begin{conjecture}[Todor\v{c}evi\'{c} \cite{MR980949}]\label{mainConjecture}
It is relatively consistent with $\mathsf{ZFC}$ that if $X$ is a regular space with no uncountable discrete subspace, then 
$\mathsf{OGA}(X)$ holds.
\end{conjecture}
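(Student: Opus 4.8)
The plan is to build the model by a countable-support iteration of proper forcings of length $\omega_2$ over a model of $\mathsf{CH}$, in the style of Todor\v{c}evi\'{c}'s original consistency proof of $\mathsf{OGA}$ for separable metric spaces, but with the iterands enlarged to cope with the non-metric, merely countable-spread setting. At successor stages I would alternate between two kinds of forcing: (i) a \emph{homogeneous-set forcing} that, for a given regular space $X$ with countable spread and a given open coloring $[X]^2=K_0\cup K_1$, generically adds an uncountable $K_0$-homogeneous subset of $X$ whenever no decomposition of $X$ into countably many $K_1$-homogeneous sets exists; and (ii) auxiliary \emph{reduction forcings} whose purpose is to push the bare hypothesis ``countable spread'' toward the hypotheses of the main theorem of this paper ($\mathsf{HL}$, strong Choquet, submetrizable). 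A standard bookkeeping enumerates all pairs $(X,K_0)$ with $|X|\le\mathfrak{c}$, and the forcing axiom obtained in the end is the fragment of $\mathsf{PFA}$ needed to meet all the relevant dense sets.

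The conceptual core is the reduction in step (ii), and here I would proceed in two moves. The first is to reduce to the hereditarily Lindel\"{o}f case: countable spread of $X$ alone does not yield $\mathsf{HL}$ in $\mathsf{ZFC}$, whereas countable spread of the \emph{square} is known to force $X$ to be both $\mathsf{HL}$ and hereditarily separable, which is exactly the leverage used in the consistency result of this paper. To remove the square assumption I would attack the $\mathsf{HS}$ and $\mathsf{HL}$ sides separately: forcing the nonexistence of $S$-spaces makes every regular $\mathsf{HS}$ space $\mathsf{HL}$ (each subspace of an $\mathsf{HS}$ space is again regular and $\mathsf{HS}$, hence Lindel\"{o}f, so the whole space is $\mathsf{HL}$), and a symmetric treatment of left-separated sequences is meant to bring $hd(X)$ down, the combination being designed to collapse the ``neither $\mathsf{HL}$ nor $\mathsf{HS}$'' possibility that countable spread permits. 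The second move, granting $\mathsf{HL}$, is to realize submetrizability and a strong Choquet structure: pass to a finer topology $\tau'$ chosen finely enough that $K_0$ stays open in the $\tau'$-product yet coarsely enough that a $\tau'$-$K_0$-homogeneous set remains $K_0$-homogeneous in $X$, thereby coding $X$ into a Polish frame.

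With such a real code in hand, step (i) runs as in the classical argument. The homogeneous-set forcing consists of finite $K_0$-homogeneous approximations ordered by reverse inclusion; properness is established against countable elementary submodels of $H(\theta)$ by the usual $\Delta$-system/elementary-submodel argument, the key point being that this argument uses the real code of $X$. Applying the forcing axiom then produces the uncountable $K_0$-homogeneous set, and the failure of the associated genericity requirement is read off as a decomposition of $X$ into countably many $K_1$-homogeneous pieces, giving the required dichotomy.

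The main obstacle, and the reason the conjecture remains open, is the reduction of the second paragraph. There is no known way, consistent with a strong forcing axiom, to turn mere countable spread of $X$ into countable spread of $X\times X$ (equivalently into $hL(X)+hd(X)\le\aleph_0$) uniformly for all regular $X$: killing $S$-spaces is achievable, but $L$-spaces exist in $\mathsf{ZFC}$ (Moore), so the density side cannot be handled by a clean ``no $L$-spaces'' statement, and one is forced to argue, coloring by coloring, that the $L$-type subspaces obstructing submetrizability already carry a $\sigma$-$K_1$-decomposition. Compounding this, an $\mathsf{HL}$ space of countable spread need not be submetrizable or strong Choquet, and the properness argument genuinely relies on a real code; extending properness of the homogeneous-set forcing to arbitrary $\mathsf{HL}$ spaces without such a code is the true crux. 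Either a $\mathsf{ZFC}$ structure theorem—that every regular countable-spread space is, modulo a $\sigma$-discrete remainder, submetrizable and strong Choquet—or a coding of $\mathsf{HL}$ countable-spread spaces by a sufficiently fine network would close the gap, and both appear to require ideas beyond the square-spread case settled in this paper.
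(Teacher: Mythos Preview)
The statement you were asked to address is a \emph{conjecture}, not a theorem: the paper does not contain a proof of it, and in fact the paper's contribution is precisely to establish weaker versions (Conjecture~\ref{mainConjecture-HL} restricted to strong $\epsilon$-spaces, and under $\mathsf{PFA}$ to regular spaces whose square also has countable spread). You clearly recognize this yourself in your final paragraph, so your write-up is really a discussion of obstacles rather than a proof proposal, and on that level it is broadly accurate.

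One point of comparison is worth noting. Your reduction step (ii) aims to push a countable-spread space into the class $\mathcal{S}^{*}$ (HL, strong Choquet, submetrizable) so as to invoke the paper's Theorem~\ref{T:OGA*}. But the paper's consistency machinery in Section~\ref{OGA and SOCA epsilon strong Consistency} does \emph{not} go through $\mathcal{S}^{*}$ at all: Lemma~\ref{SOCA ccc CH} establishes ccc of the finite-approximation forcing directly from the hypothesis that all finite powers $X^{n}$ are $\mathsf{HL}$ (the strong $\epsilon$ condition), using towers of countable elementary submodels and an induction on the size of conditions. No submetrizability, no Choquet game, and no ``real code'' is needed for properness in that argument---the ccc comes from the $\mathsf{HL}$ property of the powers. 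So the gap the paper isolates is not ``how to make $X$ submetrizable/strong Choquet'' but rather ``how to get by with $X^{2}$ $\mathsf{HL}$ (or less) instead of all $X^{n}$ $\mathsf{HL}$''; see the discussion after Lemma~\ref{SOCA ccc A1}. Your identification of Moore's $\mathsf{L}$-space as the essential obstruction is correct and matches the paper's own remarks, but the route you sketch toward the conjecture diverges from the one the authors actually pursue.
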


A regular space with no uncountable discrete subspaces (also know as spaces with countable spread) either is hereditary Lindel\"{o}f ($\mathsf{HL}$) or else is hereditary separable ($\mathsf{HS}$). Under $\mathsf{PFA}$ there are no $\mathsf{S}$-spaces \cite{MR980949}. Hence, with $\mathsf{PFA}$, every regular topological space either contains an uncountable discrete subspace or else is $\mathsf{HL}$. Thus, the above conjecture can be reformulated  as follows:

\begin{conjecture}[Todor\v{c}evi\'{c} \cite{MR980949}]\label{mainConjecture-HL}
It is relatively consistent with $\mathsf{ZFC}$ that if $X$ is a regular $\mathsf{HL}$ space, then $\mathsf{OGA}(X)$ holds.
\end{conjecture}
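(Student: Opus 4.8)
\begin{proofsketch}[Proof proposal]
Since Conjecture~\ref{mainConjecture-HL} is a consistency statement, the plan is to build a single model --- a countable support iteration of proper forcings of length $\omega_2$ over a ground model of $\mathsf{GCH}$, following the blueprint by which $\mathsf{PFA}$ yields $\mathsf{OGA}$ for separable metric spaces \cite{MR980949} --- in which the dichotomy holds for every regular $\mathsf{HL}$ space. One uses an $H_{\omega_2}$-bookkeeping enumerating all ``instances'' $(X,K_0)$, where $X$ is a presentation of a regular $\mathsf{HL}$ space of size $\le\mathfrak c$ that occurs along the iteration and $[X]^2=K_0\cup K_1$ is an open partition; at the corresponding stage one forces with $\mathbb P_{X,K_0}$, the poset of finite $K_0$-homogeneous subsets of $X$ ordered by reverse inclusion, provided $X$ is not already covered by countably many $K_1$-homogeneous sets (otherwise nothing is done). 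The union of the generic filter is then an uncountable $K_0$-homogeneous subset of $X$ --- uncountable by a density argument exploiting the failure of the countable $K_1$-cover, and still uncountable in the final model because the iteration is proper --- and a reflection argument shows that after $\omega_2$ steps every instance that lives in the final model has been handled. So the deliverables are: (i) properness of each iterand, hence of the whole iteration; (ii) the $\omega_2$-chain condition together with the size bound $|\mathbb P_{X,K_0}|\le\mathfrak c=\aleph_2$, so that the bookkeeping exhausts all instances; and (iii) a reflection/absoluteness lemma saying that every open coloring on a regular $\mathsf{HL}$ space of the final model already appears at some stage. Items (ii) and (iii) follow the usual pattern, so properness is the heart of the matter.

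Before attacking properness I would record two reductions. First, the final model satisfies that there are no $\mathsf S$-spaces (secured along the iteration, exactly as for $\mathsf{PFA}$), so there ``regular of countable spread'' is the same as ``regular $\mathsf{HL}$'' and the two forms of the conjecture coincide. Second, and more to the point, for the especially tame $\mathsf{HL}$ spaces --- the strong Choquet submetrizable ones --- the definable instance of the dichotomy holds outright in $\mathsf{ZFC}$ (the main theorem of this paper, extending Feng \cite{MR1113695}): passing to a finer separable metrizable topology turns $K_0$ into an open, hence definable, relation on a space to which Feng-type $\mathsf{ZFC}$ arguments apply. Thus the real obstacle concerns the regular $\mathsf{HL}$ spaces carrying no compatible metrizable topology, and I would attack first the restricted statement in which $X^2$ --- equivalently every finite power $X^n$ --- is also $\mathsf{HL}$: then each ``clique space'' $C_k=\{(x_1,\dots,x_k)\in X^k:\{x_i,x_j\}\in K_0\text{ for all }i<j\}$ is an open subset of the perfectly normal space $X^k$, hence a countable union of basic open boxes; such a countable network belongs to every elementary submodel containing $X$ and $K_0$, and it is the substitute for the countable base that the separable-metric proof relies on.

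The crux is the following properness argument, and this is also where I expect the decisive obstacle. Fix a countable $M\prec H_\theta$ with $X,K_0\in M$ and a condition $p\in\mathbb P_{X,K_0}\cap M$; given any $x\in X$, the goal is to extend $p\cup\{x\}$ to an $(M,\mathbb P_{X,K_0})$-generic condition, which forces the generic homogeneous set to reach outside $M$ and so guarantees it is uncountable. Constructing such a condition by the usual induction, the step that meets a predense set $D_n\in M$ requires, given the current finite condition $q\supseteq p$, finding $r\in D_n\cap M$ with $q\cup r$ still $K_0$-homogeneous; the dangerous new pairs are $\{y,a\}$ with $y\in r$ and $a\in q\setminus M$ (for example $a=x$). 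For each such $a$ the open set $U_a=\{z\in X:\{z,a\}\in K_0\}$ is, under the $X^2$-$\mathsf{HL}$ reduction, covered inside $M$ by countably many basic open sets belonging to $M$; so one can locate in $M$ a basic open box around the relevant coordinates of $q$ and then pick $r\in D_n\cap M$ from its $M$-part. The obstacle is performing this selection \emph{simultaneously} for all $a\in q\setminus M$, staying inside $M$, and keeping the finitely many constraints jointly satisfiable: for a sufficiently pathological regular $\mathsf{HL}$ space --- one not controlled near each point by countably many basic open sets --- this amalgamation can genuinely fail, $\mathbb P_{X,K_0}$ may collapse $\omega_1$, and that is precisely why the conjecture is still open. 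Removing the hypothesis on $X^2$, i.e.\ settling the conjecture in full, will in my view require a new idea: either a reflection theorem reducing an arbitrary open coloring on a regular $\mathsf{HL}$ space to one on a submetrizable (or otherwise tame) subspace, so that the $\mathsf{ZFC}$ dichotomy of this paper applies, or a genuinely different forcing that bypasses finite approximations.
\end{proofsketch}
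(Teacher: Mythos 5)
There is no proof of this statement to compare against: Conjecture~\ref{mainConjecture-HL} is stated in the paper as an \emph{open} conjecture of Todor\v{c}evi\'{c}, and the authors explicitly say in their concluding section that ``the ultimate goal of this expansion hasn't been achieved.'' Your proposal, to its credit, is honest about this --- you end by conceding that the amalgamation step in the genericity argument ``can genuinely fail'' and that ``that is precisely why the conjecture is still open'' --- so what you have written is not a proof and does not claim to be one. No proof sketch that terminates in ``this step may fail and a new idea is needed'' can be accepted as a proof of the statement.

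That said, the partial progress you outline tracks the paper's actual results quite closely. Your two reductions are exactly the paper's: the $\mathsf{ZFC}$ dichotomy for $\mathsf{HL}$ strong Choquet submetrizable spaces is Theorem~\ref{T:OGA*}, and the restricted consistency statement you propose to attack first --- the case where $X^2$ (equivalently every finite power) is $\mathsf{HL}$ --- is precisely the paper's class of strong $\epsilon$-spaces, for which Lemma~\ref{SOCA ccc CH} establishes (under $\mathsf{CH}$, or $\mathsf{A1}$) that a suitable restriction $\mathds{P}_{X_0,[X_0]^2\cap K_0}$ is ccc, yielding the consistency of $\mathsf{OGA}$ for that class from $\diamondsuit_{\omega_2}$ and, via $\mathsf{PFA}$, the result for regular spaces $X$ with $X$ and $X^2$ of countable spread. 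Your diagnosis of \emph{why} the finite powers must be $\mathsf{HL}$ --- so that the open sets of $X^n$ relevant to the coloring are countable unions of basic boxes available inside an elementary submodel --- is the same diagnosis the authors give at the end of Section~\ref{OGA and SOCA epsilon strong Consistency}. The one substantive divergence in machinery: you propose a countable support proper iteration and an $(M,\mathbb{P})$-genericity argument, whereas the paper works with ccc posets of finite homogeneous sets, the Abraham--Rudin--Shelah tower-of-models method, and finite support iterations guided by $\diamondsuit_{\omega_2}$ or $\mathsf{A1}$; for the restricted class both routes are plausible, but the paper's ccc route is the one actually carried out, and neither route is known to close the gap for arbitrary regular $\mathsf{HL}$ spaces.
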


On the other hand, a variant of $\mathsf{OGA}(X)$ which is more relevant to the study of definable sets of reals is the following statement:

\medskip

$\mathsf{OGA}^*(X)$: For every partition $[X]^{2}=K_{0} \cup K_{1}$ such that $K_{0}$ is an open subset of $[X]^{2}$, either 
    \begin{enumerate}
        \item there is a perfect (\emph{i.e.,} a nonempty closed subset without isolated points) $K_{0}$-homogeneous $P \subseteq X$, or 
        \item $X$ is the union of countably many $K_{1}$-homogeneous sets.
    \end{enumerate}

\medskip

It is known that $\mathsf{OGA}^*(A)$ holds for every analytic subset $A$ of any Polish space \cite{MR1113695}. In light of the classical theorem of Choquet which states that a topological space is Polish if and only if it is second countable, $T_{1}$, regular and strong Choquet, a first step that extends the class of separable metrizable spaces (resp., the class of Polish spaces) is the class of all $\mathsf{HL}$ submetrizable spaces (resp., the class of all $\mathsf{HL}$ strong Choquet submetrizable spaces). Recall that a topological space is called \emph{submetrizable} if it has a weaker metrizable topology and is called \emph{strong Choquet} if Player II has a winning strategy in the strong Choquet game \cite{MR0250011,MR1321597}. Using Choquet theorem as a lighthouse, the best possible class that we can work with is the class of $\mathsf{HL}$, $T_{1}$, regular and strong Choquet spaces. For this class, we show that for any space in it and an open partition, either the space is $\sigma$-$K_{1}$-homogeneous or it has a $K_{0}$-homogeneous of size continuum. This property lies between $\mathsf{OGA}^*(X)$ and  $\mathsf{OGA}(X)$. Furthermore, everything points to say that it is the strongest possible result in $\mathsf{ZFC}$ (see Corollary \ref{Corollary:OGA*}).

\medskip

In what follows, we will use the notation $\mathcal{S}$ (resp., $\mathcal{S}^{*}$) to denote the class of all submetrizable spaces (resp., the class of all $\mathsf{HL}$ strong Choquet submetrizable spaces). Examples of non-Polish spaces that belong to $\mathcal{S}^{*}$ are the Sorgenfrey line $\mathbb{S}$ and the space $\omega^{\omega}$ with the Gandy-Harrington topology $\tau_{GH}$ (the topology generated by all lightface $\Sigma^{1}_{1}$ sets) \cite{MR658330,MR2455198}, among others. In this note we consider the following variations on Conjecture \ref{mainConjecture-HL}:

\begin{question}\label{Q:oca}
Is it relatively consistent with $\mathsf{ZFC}$ that $\mathsf{OGA}(X)$ holds for every $X \in \mathcal{S}$?
\end{question}

\begin{question}\label{Q:oca strong epsilon}
Is it relatively consistent with $\mathsf{ZFC}$ that $\mathsf{OGA}(X)$ holds for every regular space $X$ such that both $X$ and $X^2$ have countable spread?
\end{question}

And the corresponding definable version for the first one becomes in:

\begin{question}\label{Q:oca*}
Does $\mathsf{OGA}^*(X)$ holds for every $X\in \mathcal{S}^{*}$?
\end{question}

The study of the statement $\mathsf{OGA}(X)$ outside of the class of separable metrizable spaces was suggested by Todor\v{c}evi\'{c} and Farah in \cite{MR1486583}, but as far as we know, no actual work in the area has been done.

\medskip 

We concentrate on the study of the statement $\mathsf{OGA}(X)$ (resp., $\mathsf{OGA}^{*}(X)$) for every $X \in \mathcal{S}$ (resp., $X\in \mathcal{S}^{*}$) and also in a weakening of $\mathsf{OGA}$ named $\mathsf{SOCA}$ for the consistency results (see Section \ref{OGA and SOCA epsilon strong Consistency}). We answer the Questions \ref{Q:oca strong epsilon} and \ref{Q:oca*} in the positive and, among other results, we provided some partial answers to the Question \ref{Q:oca}.
\medskip

\section{Notation, terminology, and basic definitions}

The notation and terminology in this paper is fairly standard. We will use \cite{MR1321597} as general reference for descriptive set theory, \cite{MR597342} as a reference for set theory and \cite{MR1039321} as a reference for general topology. 

\medskip

The \emph{Souslin operation} respect to a family of subsets  $\mathcal{P}=\{P_s:s\in\omega^{<\omega}\}$ of a set $X$ is the set $\bigcup_{x\in\omega^\omega}\bigcap_{n\in\omega }P_{x\restriction n}$, which is denoted by $\mathcal{A}\mathcal{P}$. If in addition $X$ is a topological space and every $P_{s}$ is closed, then $\mathcal{A}\mathcal{P}$ is called a \emph{Souslin set}. On a different topic, given a topological space $X$, the \emph{strong Choquet game $G_{X}^{s}$} is a two-player game defined as follows: Player I plays a pair $\orpair{x_{n}}{U_{n}}$ where $x_n\in U_n$ and $U_n\subseteq X$ is open. Player II must then play an open set $V_n\subseteq U_n$, with $x_n\in V_n$. This game generates a sequence $U_0\supseteq V_0\supseteq U_1\supseteq V_1\dots$, with $x_n\in V_n$. Player II win this run of the game if $\bigcap_{n\in\omega}U_n=\bigcap_{n\in\omega}V_n\neq\emptyset$. A nonempty space $X$ is called a \emph{strong Choquet space} if Player II has a winning strategy in $G_X^s$.

\medskip

Given a topological space $X$, by $[X]^2$ we denoted the collection of subsets of $X$ of size two. We say that the set $K\subseteq [X]^2$ is \emph{open} if  $\{\orpair{x}{y} \in X^2:\{x,y\}\in K\}$ is open in $X^2$. A partition $[X]^2=K_0\cup K_1$ such that $K_0$ is open is called an  \emph{open partition}. A subset $Y \subseteq X$ is called a  \emph{$K_{i}$-homogeneous set} for $i\in 2$, if $[Y]^{2} \subseteq K_{i}$. We say that $Y$ is a \emph{$\sigma$-$K_{i}$-homogeneous set} for $i\in 2$,  if $Y$ can be covered by countably many $K_{i}$-homogeneous subsets. For $A,B\subseteq X$,  $A\otimes B$ denotes the set $\{\{x,y\}\in [X]^2 \colon \orpair{x}{y} \in A\times B \, \vee \, \orpair{y}{x}\in A\times B \}$. Given a class $\mathcal{X}$ of topological spaces, $\mathsf{OGA}(\mathcal{X})$ (resp., $\mathsf{OGA}^{*}(\mathcal{X})$) abbreviates $\mathsf{OGA}(X)$ for all $X \in \mathcal{X}$ (resp., $\mathsf{OGA}^{*}(X)$ for all $X \in \mathcal{X}$). We will also work with a weakening of  $\mathsf{OGA}$. The Semi-Open Coloring Axiom \emph{$\mathsf{SOCA}(X)$} is the statement ``for every open partition (coloring) $[X]^{2}=K_0\cup K_1$ (where $K_0$ is open) there is an uncountable homogeneous set for either $K_0$ or $K_1$\textquotedblright. \emph{$\mathsf{SOCA}$} abbreviates $\mathsf{SOCA}(X)$ for all metric separable spaces.

\medskip

For Section \ref{OGA and SOCA epsilon strong Consistency}, we will work with various definitions for topological spaces. A collection of sets $\mathcal{A}$ is an \emph{$\omega$-cover of $X$} if and only if for all finite set $F\subseteq X$, there is $A\in \mathcal{A}$ such that $F\subseteq A$. We say that $X$ is an \emph{$\epsilon$-space} if and only if $X^{n}$ is a Lindel\"of space for all $n\in \omega$. This is equivalent to say that every open $\omega$-cover has a countable subcover. This has also been named as finitely powerfully Lindel\"of.

\medskip

Finally, the following definitions will come up in the context of $\mathsf{PFA}$. We say that an space is \emph{Hereditarily Separable  $(\mathsf{HS})$} if all its subspaces have a countable dense set. A space has \emph{countable spread} if and only if all its discrete subspaces are of countable size. As we will mention in Fact \ref{HL or HS}, countable spread has a really close relation with the two following type of spaces.
An \emph{$\mathsf{L}$-space} is a regular topological space that is $\mathsf{HL}$ but not separable and an \emph{$\mathsf{S}$-space} is a regular topological space that is $\mathsf{HS}$ but not Lindel\"of.

\section{Open coloring of pairs from spaces in \texorpdfstring{$\mathcal{S}^{*}$}{Lg}}

In this section we improve Feng's result in \cite{MR1113695} by showing that  $\mathsf{OGA}^{*}(\mathcal{S}^{*})$ is true in $\mathsf{ZFC}$. The proof for this result is, in some sense, similar to the classical proof of $\mathsf{OGA}^{*}(\omega^{\omega})$ given in \cite{MR1486583}.

\medskip

\begin{theorem}\label{T:OGA*}
$\mathsf{OGA}^{*}(\mathcal{S}^{*})$. That is, if $X \in \mathcal{S}^{*}$, then $\mathsf{OGA}^{*}(X)$ holds.
\end{theorem}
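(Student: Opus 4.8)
**Proof plan for Theorem \ref{T:OGA*}.**

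The plan is to mimic the classical argument that $\mathsf{OGA}^*(\omega^\omega)$ holds (as in \cite{MR1486583}), but carried out inside the Gandy–Harrington-style auxiliary topology that the strong Choquet property provides, while using submetrizability to control the open coloring. Let $X \in \mathcal{S}^*$, fix a weaker metrizable topology $\rho$ on $X$ (so $X$ with $\rho$ is separable metric, since $X$ is $\mathsf{HL}$, hence $\rho$-hereditarily Lindelöf, hence $\rho$-separable metric), and fix an open partition $[X]^2 = K_0 \cup K_1$ with $K_0$ open in the product of the original topology. First I would observe that by refining $\rho$ we may assume the metric topology also sees $K_0$: more precisely, since $K_0$ is open in $X^2$ and $X$ is $\mathsf{HL}$, the set $\{\langle x,y\rangle : \{x,y\}\in K_0\}$ is a union of countably many "rectangles" $U_n \times V_n$ with $U_n, V_n$ open in $X$; I can then enlarge $\rho$ to a still-separable-metrizable topology making all these $U_n, V_n$ open (adding countably many open sets to a second-countable topology keeps it second countable, hence — being regular and $T_1$ from submetrizability — metrizable by Urysohn). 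So WLOG the coloring is already "$\rho$-open'', i.e. open in $(X,\rho)^2$.

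Next, the dichotomy. Suppose $X$ is not $\sigma$-$K_1$-homogeneous; I must produce a perfect $K_0$-homogeneous set. I would run the standard elimination: let $X' \subseteq X$ be the set of points $x$ such that $x$ has no neighbourhood $W$ (in the original topology) with $W \cap X'$ — better, iterate — $K_1$-homogeneous "locally everywhere'' is built by a Cantor–Bendixson style transfinite removal. Concretely, call a set $A$ \emph{$K_0$-perfect} if every nonempty relatively open subset of $A$ is not $\sigma$-$K_1$-homogeneous; by a routine transfinite-derivative argument using that $X$ is $\mathsf{HL}$ (so the process stabilises at a countable stage and the removed part is $\sigma$-$K_1$-homogeneous), if $X$ is not $\sigma$-$K_1$-homogeneous there is a nonempty $K_0$-perfect set $Y \subseteq X$, and $Y$ inherits all the relevant hypotheses ($\mathsf{HL}$, $T_1$, regular; submetrizable via $\rho$; strong Choquet — here one uses that a $G_\delta$ or at least a sufficiently nice subspace of a strong Choquet space is strong Choquet, or re-derives $Y$ as exactly such a subspace, which is the one point to be careful about). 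Now inside $Y$ I build a Cantor scheme $\{W_s : s \in 2^{<\omega}\}$ of nonempty open (in $Y$) sets with $\rho$-diameter $\to 0$, with $\overline{W_{s0}} \cup \overline{W_{s1}} \subseteq W_s$ and $W_{s0} \cap W_{s1} = \emptyset$, and crucially with $W_{si} \otimes W_{sj} \subseteq K_0$ for $i \neq j$: this last step is where $K_0$-perfectness is used — since no relatively open piece of $Y$ is $\sigma$-$K_1$-homogeneous, inside any $W_s$ one can find two points spanning a $K_0$-edge and then (by openness of $K_0$) small neighbourhoods of them still spanning only $K_0$-edges, and one refines to keep this across all pairs from the tree. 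The strong Choquet property of $Y$ is what guarantees that along each branch $x \in 2^\omega$ the nested sequence $\bigcap_n W_{x\restriction n}$ is nonempty (feed the scheme to Player II's winning strategy, choosing the Player I moves $\langle \cdot, W_{x\restriction n}\rangle$ appropriately); together with $\rho$-diameters going to $0$ and the $T_1$/Hausdorff-via-$\rho$ separation, each branch yields a single point, distinct branches yield distinct points, and the resulting set $P = \{p_x : x \in 2^\omega\}$ is $K_0$-homogeneous and — after checking that the branch map $2^\omega \to (X,\rho)$ is continuous and injective, hence an embedding — a perfect subset of $X$ of size continuum.

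The main obstacle, and the place I would spend the most care, is ensuring that the "elimination'' set $Y$ is still strong Choquet (and still submetrizable $\mathsf{HL}$), and that the Cantor scheme can be threaded through Player II's strategy — i.e. reconciling the combinatorial tree construction with the \emph{sequential} nature of the strong Choquet game, so that one is allowed to decide $W_{s0}, W_{s1}$ after having already committed to $W_s$ as a legal Player-I move. I expect this to be handled by building the scheme level-by-level and running countably many copies of the strategy (one per branch) in a dovetailed fashion, or by passing to a dense-in-itself $G_\delta$-like core of $Y$ on which the strategy restricts nicely. A secondary, more routine point is the verification that $P$ is closed: here submetrizability does the work, since $P$ is the continuous injective image of the compact space $2^\omega$ into $(X,\rho)$, hence $\rho$-compact, hence $\rho$-closed, hence closed in the finer original topology, and it has no isolated points since the $W_s$ split — so $P$ is perfect as required.
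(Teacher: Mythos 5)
Your overall strategy coincides with the paper's: isolate a set of points none of whose neighbourhoods is $\sigma$-$K_1$-homogeneous, build a Cantor scheme of $K_0$-linked open sets with shrinking $\rho$-diameters, and use Player II's strong Choquet strategy to realize the branches. But three concrete points go wrong. First, your opening reduction --- that $K_0$ is a countable union of open rectangles because $X$ is $\mathsf{HL}$ --- is false: hereditary Lindel\"ofness of $X$ gives no control over $X^2$ (the Sorgenfrey line lies in $\mathcal{S}^{*}$ and its square is not even Lindel\"of), so you cannot assume the colouring is $\rho$-open. Fortunately this step is unnecessary: one takes the witnessing rectangles $U_0\otimes U_1\subseteq K_0$ open in the \emph{original} topology and merely shrinks them by intersecting with small $\rho$-balls, which are open in $X$ by submetrizability.

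Second --- the obstacle you yourself flag as the main one --- your plan plays the Choquet game inside the derived subspace $Y$, and none of your proposed justifications that $Y$ is strong Choquet works: $Y$ is closed in $X$ but not $G_\delta$ in general (it is the complement of the union of all $\sigma$-$K_1$-homogeneous open sets; a single removal step suffices, by the way, since $\mathsf{HL}$ makes that union itself $\sigma$-$K_1$-homogeneous), and closed subspaces of strong Choquet spaces need not be strong Choquet. The paper dissolves this by never restricting the game to $Y$: Player I's moves are pairs $\langle x_s, U_s\rangle$ with $x_s$ taken from the derived set but $U_s$ open in $X$, the whole tree of positions is kept compatible with the strategy $\sigma$ for $X$ itself, and the $K_0$-homogeneity of the limit set is carried entirely by the requirement $U_{s^\smallfrown 0}\otimes U_{s^\smallfrown 1}\subseteq K_0$, so membership of the limit points in $Y$ is never needed. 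Third, at the end you argue that $P$ has no isolated points ``since the $W_s$ split,'' but that only rules out isolation in the $\rho$-topology; an open set of the finer topology of $X$ containing a branch point need not contain any $W_{x\restriction n}$, so your $P$ may have isolated points in $X$. The paper repairs this by replacing $F=f^{\prime\prime}2^{\omega}$ (which is $\rho$-compact, hence closed in $X$) with its set of condensation points, which by $\mathsf{HL}$ differs from $F$ by a countable set and is the required perfect set.
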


\begin{proof}
Fix a metric $\rho$ on $X$ with $\rho \leqslant 1$, where every $\rho$-open ball is open in $X$ and fix an open partition $[X]^{2}=K_{0} \cup K_{1}$ such that $X$ is not a  $\sigma$-$K_{1}$-homogeneous set. Let $\sigma$ be a winining strategy for Player II in the strong Choquet game $G^{s}_{X}$.  Recursively on $s \in 2^{<\omega}$ we will construct three families $\langle x_{s} \colon s \in 2^{<\omega}\rangle$, $\langle U_{s} \colon s \in 2^{<\omega}\rangle$, and $\langle V_{s} \colon \in 2^{<\omega}\rangle$ so that:

\begin{enumerate}
    \item $V_{s} \subseteq U_{s}\subseteq X$ are open sets, $x_{s} \in V_{s}$ and $\mathsf{diam}_{\rho}(U_s) \leqslant 1/2^{|s|}$;
    \item if $U$ is any open set with $x_{s} \in U$, then 
          $U$ is not $\sigma$-$K_{1}$-homogeneous;
    \item the position 
          $\langle \langle x_{s\restriction 0}, U_{s\restriction 0}\rangle, V_{s\restriction 0},\dots$,$\langle x_{s}, U_{s}\rangle, V_{s}\rangle$ is compatible with $\sigma$; and 
    \item $U_{s^\smallfrown0}\otimes U_{s^\smallfrown1}$ is 
          a $K_{0}$-homogenous set with   $\overline{U}_{s^\smallfrown0}^{\rho}\cap \overline{U}_{s^\smallfrown1}^{\rho}=\emptyset$.
\end{enumerate}

\medskip

To guarantee those conditions, the recursion step  needs the following claim:

\begin{claim}\label{Claim:OCA*}
Let $V$ be an open set which is not $\sigma$-$K_{1}$-homogeneous and $\varepsilon > 0$. Then there are $x_0, x_1\in V$ and two open subsets $U_{0},U_{1} \subseteq V$ so that:

\begin{enumerate}[(i)]
    \item $x_i\in U_{i}$ and 
          $\mathsf{diam}_{\rho}(U_{i}) \leqslant \varepsilon$, 
          for $i\in 2$;  
    \item $U_{0}\otimes U_{1}$ is a $K_{0}$-homogeneous set with
          $\overline{U}_{0}^{\rho} \cap \overline{U}_{1}^{\rho}=\emptyset$; and 
    \item if $i \in 2$ and $U$ is any open set with 
          $x_{i} \in U$, then 
          $U$ is not $\sigma$-$K_{1}$-homogeneous.
\end{enumerate} 
\end{claim}

\begin{proofclaim} 
Define $\mathcal{U}$ as the family of all open sets $U$ which are $\sigma$-$K_{1}$-homogeneous. Since $X$ is $\mathsf{HL}$, there exists $\mathcal{U}^{\prime} \subseteq \mathcal{U}$ a countable subfamily with the same union. Then $\bigcup\mathcal{U}^{\prime}$ is $\sigma$-$K_{1}$-homogeneous and $A = V \setminus \bigcup\mathcal{U}^{\prime}$ cannot be covered by countably many  $K_{1}$-homogeneous sets. In particular, $A$ must be an uncountable set and, for every $x \in A$ and every open set $U$ with $x \in U$, it follows that $U$ is not $\sigma$-$K_{1}$-homogeneous.


Fix $\{x_{0},x_{1}\} \in [A]^{2}$ such that $\{x_{0},x_{1}\} \in K_{0}$. Given that $K_0$ is open, there are two disjoint open subsets $U_{0}, U_{1} \subseteq V$ such that $U_{0}\otimes U_{1}\subseteq K_0$ and $\{x_{0},x_{1}\} \in U_{0}\otimes U_{1}$. Finally, if necessary, shrink the sets $U_{0}, U_{1}$ so that the requirements $\overline{U}_{0}^{\rho} \cap \overline{U}_{1}^{\rho}=\emptyset$ and $\mathsf{diam}_{\rho}(U_{i}) \leqslant \varepsilon$ are satisfied for $i\in 2$ .
\end{proofclaim}

\medskip

Let $U_{\emptyset}=X$ and let $x_{\emptyset}$ be any point in 
$U_{\emptyset} \setminus \bigcup\mathcal{U}^{\prime}$, where $\mathcal{U}^{\prime}$ is as in the proof of Claim \ref{Claim:OCA*}. Then, for any open set $U$ with $x_{\emptyset} \in U$ it follows that $U$ is not $\sigma$-$K_{1}$-homogeneous. So $V_{\emptyset}$ is obtained by applying $\sigma$ to the play of the game $G^{s}_{X}$ in which Player I's first move is $\langle x_{\emptyset},U_{\emptyset}\rangle$.


In the recursive step assume $x_{s},$  $U_{s}$, and $V_{s}$ have been defined so as to satisfy $(1)$ to $(3)$. Applying the previous claim to $V_{s}$ and $\varepsilon = 1/2^{|s|+1}$ we get two points $x_{s^\smallfrown 0}, x_{s^\smallfrown 1} \in V_{s}$ and two open subsets $U_{s^\smallfrown 0}, U_{s^\smallfrown 1} \subseteq V_{s}$ satisfying the conclusion of Claim \ref{Claim:OCA*}. Then $V_{s^\smallfrown i}$ for $i \in 2$ is obtained by applying $\sigma$ to the play of the game $G^{s}_{X}$ in which the following moves been made: 
\[
\langle x_{\restr{s}{0}},U_{\restr{s}{0}} \rangle, V_{\restr{s}{0}}, \dots, \langle x_{s}, U_{s}\rangle, V_{s}, \langle x_{s^\smallfrown i},  U_{s^\smallfrown i}\rangle.
\]
This finishes our recursive construction of the three families mentioned above.

\medskip

Since $\sigma$ is winning for II and the sequence 
\[
\langle \langle x_{\restr{s}{0}},U_{\restr{s}{0}} \rangle, V_{\restr{s}{0}}, \dots, \langle x_{s}, U_{s}\rangle, V_{s},\dots \rangle \in [\sigma]
\]
for $a \in 2^{\omega}$, it follows that $\bigcap_{n \in \omega}U_{\restr{a}{n}} = \bigcap_{n \in \omega}V_{\restr{a}{n}} \neq \emptyset$ for each $a \in 2^{\omega}$. Moreover, $\bigcap_{n \in \omega}U_{\restr{a}{n}} = \bigcap_{n \in \omega} \overline{U}_{\restr{a}{n}}^{\rho}$ is a singleton, say $\{f(a)\}$. Clearly, $f \colon 2^{\omega} \to \langle X,\rho\rangle$ is injective and continuous, and therefore an embedding. Then $F=f^{\prime\prime} 2^{\omega}$ is a $K_{0}$-homogeneous closed subset of $X$ of size the continuum. Finally, since $X$ is $\mathsf{HL}$, 
\[
P=\{x \in F \colon x \text{ is a condensation point of } F\}\footnote{A point $x$ is a \emph{condensation point} if every open neighborhood of $x$ is uncountable.}
\]
is the desired perfect $K_{0}$-homogeneous subset of $X$.
\end{proof}
 
It is important to discuss the topological property asked for the $K_{0}$-homogeneous set in Theorem \ref{T:OGA*}. While in the context of Polish spaces, one can improve the $K_{0}$-homogeneous set to be perfect and compact, in our context that is not possible. For example, it is known that all the compact spaces of the Sorgenfrey line $\mathbb{S}$ are countable. Nevertheless, this topological space satisfy all the conditions of the previous theorem.

\medskip

To finish our study of all the strong Choquet spaces of our interest, we need to drop the assumption of submetrizable. In the same way that going from Polish to submetrizable weaken the $K_{0}$-homogeneous set from compact to perfect, when we drop submetrizablility the $K_{0}$-homogeneous set may not be closed.

\begin{corollary}\label{Corollary:OGA*}
Given a $\mathsf{HL}$, $T_{1}$, regular and strong Choquet space $X$. We have $\mathsf{OGA}(X)$. Furthermore, if $X$ is not $\sigma$-$K_1$-homogeneous, it has a continuum size $K_0$-homogeneous set without isolated points.
\end{corollary}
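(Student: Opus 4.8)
The plan is to rerun the proof of Theorem \ref{T:OGA*} essentially verbatim, but with the metric $\rho$ deleted from all of the bookkeeping. In that proof the metric $\rho$ is used only in order to force each $\bigcap_{n}U_{a\restriction n}$ to be a singleton and to turn the resulting map into an embedding into $\langle X,\rho\rangle$, and neither of these is needed merely to extract a $K_{0}$-homogeneous set of size continuum; this is exactly why the conclusion now weakens from \emph{perfect} to \emph{without isolated points}, as anticipated in the paragraph above. So fix an open partition $[X]^{2}=K_{0}\cup K_{1}$. If $X$ is $\sigma$-$K_{1}$-homogeneous then clause (2) of $\mathsf{OGA}(X)$ holds and there is nothing to prove, so assume it is not, and let $\sigma$ be a winning strategy for Player II in $G^{s}_{X}$. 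Observe that, being $T_{1}$ and regular, $X$ is Hausdorff; Hausdorffness is all we use of these two hypotheses, whereas $\mathsf{HL}$ and strong Choquetness are used essentially.

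First I would check the metric-free version of Claim \ref{Claim:OCA*}: given an open $V$ that is not $\sigma$-$K_{1}$-homogeneous, run the proof of that claim unchanged up to where $A=V\setminus\bigcup\mathcal{U}^{\prime}$ is shown to be uncountable with the property that no open neighborhood of a point of $A$ is $\sigma$-$K_{1}$-homogeneous; since $A$ is not $K_{1}$-homogeneous, pick $\{x_{0},x_{1}\}\in[A]^{2}\cap K_{0}$; using that $K_{0}$ is open, get open $U_{0}\ni x_{0}$ and $U_{1}\ni x_{1}$ with $U_{0},U_{1}\subseteq V$ and $U_{0}\otimes U_{1}\subseteq K_{0}$; and finally shrink $U_{0},U_{1}$ using Hausdorffness so that $U_{0}\cap U_{1}=\emptyset$. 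This yields the analogues of clauses (i)--(iii) of Claim \ref{Claim:OCA*} with the metric clauses $\mathsf{diam}_{\rho}(U_{i})\leqslant\varepsilon$ and $\overline{U}_{0}^{\rho}\cap\overline{U}_{1}^{\rho}=\emptyset$ replaced by the single clause $U_{0}\cap U_{1}=\emptyset$.

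Now carry out the recursion of Theorem \ref{T:OGA*} to build $x_{s},U_{s},V_{s}$ for $s\in 2^{<\omega}$, applying the metric-free claim at the recursive step and letting $V_{s^{\smallfrown}i}$ be the response prescribed by $\sigma$ to the partial play ending with Player I's move $\langle x_{s^{\smallfrown}i},U_{s^{\smallfrown}i}\rangle$; the surviving requirements are: $V_{s}\subseteq U_{s}$ are open with $x_{s}\in V_{s}$; no open neighborhood of $x_{s}$ is $\sigma$-$K_{1}$-homogeneous; the play is compatible with $\sigma$; and $U_{s^{\smallfrown}0}\otimes U_{s^{\smallfrown}1}\subseteq K_{0}$ with $U_{s^{\smallfrown}0}\cap U_{s^{\smallfrown}1}=\emptyset$. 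For $a\in 2^{\omega}$ the induced full play lies in $[\sigma]$, so $\bigcap_{n}U_{a\restriction n}\neq\emptyset$ and we may pick $f(a)$ in it. If $a\neq b$ first differ at coordinate $n$, then with $s=a\restriction n$ the points $f(a)$ and $f(b)$ lie, respectively, in the two disjoint sets $U_{s^{\smallfrown}0}$ and $U_{s^{\smallfrown}1}$, so $f$ is injective, and since $U_{s^{\smallfrown}0}\otimes U_{s^{\smallfrown}1}\subseteq K_{0}$ we get $\{f(a),f(b)\}\in K_{0}$. Hence $F=f^{\prime\prime}2^{\omega}$ is a $K_{0}$-homogeneous subset of $X$ with $|F|=|2^{\omega}|=\mathfrak{c}$ (it need not be closed).

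Finally, trim $F$ as in Theorem \ref{T:OGA*}: set $P=\{x\in F\colon x\text{ is a condensation point of }F\}$. The open sets meeting $F$ in a countable set cover $F\setminus P$, and their union is an open, hence Lindel\"of, subspace of the $\mathsf{HL}$ space $X$; so countably many of them already cover $F\setminus P$, whence $F\setminus P$ is countable and $|P|=\mathfrak{c}$. Moreover $P$ has no isolated points: if $U$ were open with $U\cap P=\{x\}$, then $U\cap F=\{x\}\cup(U\cap(F\setminus P))$ would be countable, contradicting $x\in P$. Thus $P\subseteq F$ is the desired $K_{0}$-homogeneous set of size continuum without isolated points, and being uncountable it also witnesses clause (1) of $\mathsf{OGA}(X)$, so $\mathsf{OGA}(X)$ holds. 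The one point requiring care is to confirm that nothing metric in Theorem \ref{T:OGA*} was load-bearing: that mere disjointness of the $U_{s^{\smallfrown}i}$ already separates distinct branches, so that injectivity of $f$ survives the loss of $\mathsf{diam}_{\rho}(U_{s})\to 0$, and that the condensation-point trim, which in Theorem \ref{T:OGA*} delivered a closed (perfect) set because $F$ was closed there, still delivers a set without isolated points when $F$ is only known to be $\mathsf{HL}$.
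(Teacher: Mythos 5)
Your proposal is correct and follows essentially the same route as the paper's own proof: drop the metric from the construction of Theorem \ref{T:OGA*}, use the Axiom of Choice to select a point of each (nonempty but possibly non-singleton) intersection $\bigcap_{n}U_{a\restriction n}$, get injectivity from the disjointness of $U_{s^{\smallfrown}0}$ and $U_{s^{\smallfrown}1}$, and then use $\mathsf{HL}$ to trim to a continuum-sized set of condensation points without isolated points. The only difference is that you spell out the details (the metric-free version of Claim \ref{Claim:OCA*} and the Lindel\"of argument for the trimming) that the paper leaves implicit.
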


\begin{proof}
In the proof of Theorem \ref{T:OGA*}, we only use the condition of submetrizable to have a metric that ensures that $\bigcap_{n \in \omega}U_{\restr{a}{n}}=\{f(a)\}$ for every $a\in 2^{\omega}$. This makes $f$ a continuous function.


In the absence of such metric, strong Choquet still ensures that $\bigcap_{n \in \omega}U_{\restr{a}{n}}\neq \emptyset$ but the intersection might not be a singleton. Using the Axiom of Choice, we can select $g(a)\in \bigcap_{n \in \omega}U_{\restr{a}{n}}$ for all $a\in 2^{\omega}$. This function $g$ is an injective but might not be continuous. Therefore, $G=g^{\prime\prime}2^{\omega}$ is size continuum $K_{0}$-homogeneous set and we could find, using $\mathsf{HL}$, a continuum size subset of $G$ without isolated points. Nevertheless, we cannot show that $G$ contains an uncountable closed subset.
\end{proof}

The following result shows how the elements of $\mathcal{S}^{*}$ can be studied as submetrizables extentions of the Baire space $\omega^{\omega}$.  

\begin{theorem}\label{Th:Continuous image of Baire}
 Let $\langle X, \tau \rangle$ be any element of $\mathcal{S}^{*}$ with $\rho$ a metric on $X$ witness that $\tau$ is submetrizable. Then 
 \begin{enumerate}
  \item $\tau\subseteq \Sigma_{1}^{1}\left(\langle \hat{X}, \hat{\rho}\rangle\right)$, \emph{i.e.,} every open set in $\tau$ is absolutely analytic in its metric completion.     
  \item There is a topology $\tau_{X}$ on $\omega^{\omega}$ extending the usual topology on $\omega^{\omega}$ so that $\langle \omega^{\omega}, \tau_{X}\rangle \in \mathcal{S}^{*}$ and $\langle X,\tau \rangle$ is a continuous image of $\langle \omega^{\omega}, \tau_{X}\rangle$.
\end{enumerate}
\end{theorem}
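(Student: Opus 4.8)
The plan is to prove (1) first and then deduce (2) from it together with standard permanence properties. For (1), since $\langle X,\tau\rangle$ is $\mathsf{HL}$ and the $\rho$-topology is coarser than $\tau$, the space $\langle X,\rho\rangle$ is separable metric (being Lindel\"of metric), so its completion $\langle\hat X,\hat\rho\rangle$ is Polish. Fix a winning strategy $\sigma$ for Player II in $G^{s}_{\langle X,\tau\rangle}$ and an arbitrary nonempty $U\in\tau$ (for $U=\emptyset$ there is nothing to prove). I would construct, by recursion on $s\in\omega^{<\omega}\setminus\{\emptyset\}$, nonempty $\tau$-open sets $F_{s}\subseteq U$, points $p_{s}\in F_{s}$, and $\tau$-open sets $G_{s}$ such that: $(a)$ the sequence $\orpair{p_{s\restriction 1}}{F_{s\restriction 1}}, G_{s\restriction 1},\dots,\orpair{p_{s}}{F_{s}},G_{s}$ is a run consistent with $\sigma$ (so in particular $F_{s^\smallfrown n}\subseteq G_{s}\subseteq F_{s}$); $(b)$ $\mathsf{diam}_{\rho}(F_{s})\to 0$ along each branch; and $(c)$ $\{G_{s^\smallfrown n}\colon n\in\omega\}$ covers $G_{s}$, while $\{G_{\langle n\rangle}\colon n\in\omega\}$ covers $U$. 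To start, for each $x\in U$ let Player I play $\orpair{x}{W_{x}}$ with $x\in W_{x}\subseteq U$ open and $\mathsf{diam}_{\rho}(W_{x})\leqslant 1$, put $G_{x}=\sigma(\orpair{x}{W_{x}})$, and use that $\{G_{x}\colon x\in U\}$ is a $\tau$-open cover of $U$ and $\langle X,\tau\rangle$ is $\mathsf{HL}$ to extract a countable subcover, which yields the level-one data. The recursion step is analogous: cover $G_{s}$ (not the larger $F_{s}$) by small $\tau$-open sets realized as Player II's responses along one-step extensions of the current run, and pass to a countable subcover by $\mathsf{HL}$. The key design choice is exactly that the responses $G_{s}$ are what get covered, since that is what keeps every point of $U$ on a branch.

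Now put $P_{\emptyset}=\hat X$ and $P_{s}=\overline{F_{s}}^{\hat\rho}$ for $s\neq\emptyset$; every $P_{s}$ is closed in $\hat X$. For $a\in\omega^{\omega}$ the sets $\overline{F_{a\restriction n}}^{\hat\rho}$ are decreasing, nonempty and $\hat\rho$-small, so $\bigcap_{n}P_{a\restriction n}$ is a single point $z_{a}$; because the run along $a$ is consistent with the \emph{winning} strategy $\sigma$, the set $\bigcap_{k\geqslant 1}F_{a\restriction k}$ is nonempty, hence equals $\{z_{a}\}$, so $z_{a}\in F_{a\restriction 1}\subseteq U$. Conversely, any $x\in U$ lies in some $G_{\langle n_{0}\rangle}$, then in some $G_{\langle n_{0},n_{1}\rangle}$, and so on, so $x=z_{a}$ for $a=\langle n_{0},n_{1},\dots\rangle$. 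Thus $U=\mathcal{A}\mathcal{P}$ for the family $\mathcal{P}=\{P_{s}\colon s\in\omega^{<\omega}\}$ of closed subsets of $\hat X$, i.e., $U$ is a Souslin set in the Polish space $\hat X$, and therefore $U\in\Sigma_{1}^{1}(\langle\hat X,\hat\rho\rangle)$.

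For (2), apply (1) with $U=X$: then $X$ is a nonempty analytic subset of $\hat X$, so there is a continuous surjection $\psi\colon\omega^{\omega}\to\langle X,\rho\rangle$ (one may also take $\psi(a)=z_{a}$ from the scheme above, now built for $U=X$). Let $\tau_{X}$ be the topology on $\omega^{\omega}$ generated by the usual topology together with $\{\psi^{-1}(V)\colon V\in\tau\}$; equivalently, $\tau_{X}$ is the topology pulled back along the injection $j\colon a\mapsto\orpair{a}{\psi(a)}$ from $\omega^{\omega}\times\langle X,\tau\rangle$. Then $\tau_{X}$ extends the usual topology and refines a Polish topology, so $\langle\omega^{\omega},\tau_{X}\rangle$ is submetrizable, and $\psi\colon\langle\omega^{\omega},\tau_{X}\rangle\to\langle X,\tau\rangle$ is continuous and onto, so $\langle X,\tau\rangle$ is a continuous image of $\langle\omega^{\omega},\tau_{X}\rangle$. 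It remains to check that $\langle\omega^{\omega},\tau_{X}\rangle\in\mathcal{S}^{*}$, i.e., that it is $\mathsf{HL}$ and strong Choquet. Via $j$ it is homeomorphic to the graph $\Gamma_{\psi}$ carrying the subspace topology from $\omega^{\omega}\times\langle X,\tau\rangle$. That product is $\mathsf{HL}$ (a product of a second countable space with an $\mathsf{HL}$ space is $\mathsf{HL}$: work over a fixed countable base of $\omega^{\omega}$ and apply $\mathsf{HL}$-ness of $\langle X,\tau\rangle$ to the corresponding slices) and strong Choquet (a product of two strong Choquet spaces is strong Choquet, with Player II answering coordinatewise). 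Finally $\Gamma_{\psi}$ is closed in the metrizable space $\omega^{\omega}\times\langle X,\rho\rangle$, being the graph of the $\rho$-continuous map $\psi$ into a Hausdorff space, hence $G_{\delta}$ there, hence $G_{\delta}$ in the finer product $\omega^{\omega}\times\langle X,\tau\rangle$. Since $\mathsf{HL}$ passes to subspaces and strong Choquetness passes to $G_{\delta}$ subspaces (simulate the ambient winning strategy, additionally shrinking into the open sets witnessing the $G_{\delta}$), $\Gamma_{\psi}$ and hence $\langle\omega^{\omega},\tau_{X}\rangle$ is $\mathsf{HL}$ and strong Choquet, as required.

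I expect the crux to be the recursion in (1): one must simultaneously satisfy the covering requirement — which is what makes the Souslin scheme exhaust all of $U$ — and consistency with $\sigma$ — which is what forces each branch intersection into $X$ rather than merely into $\hat X$. These two pulls are reconciled by covering, at every stage, the strategy's \emph{responses} $G_{s}$ instead of the larger sets $F_{s}$, and by seeding the first level from the responses $\sigma(\orpair{x}{W_{x}})$, $x\in U$. The remaining ingredients — nonempty analytic subsets of Polish spaces are continuous images of $\omega^{\omega}$; products and $G_{\delta}$ subspaces preserve strong Choquetness; subspaces preserve $\mathsf{HL}$; graphs of continuous maps into Hausdorff spaces are closed — are all standard.
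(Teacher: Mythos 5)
Your proof of part (1) is essentially the paper's: the same Souslin scheme is extracted from the strong Choquet game, with $\mathsf{HL}$ used at each level to thin the cover formed by Player II's responses down to a countable one, the shrinking $\rho$-diameters forcing each branch intersection in $\hat{X}$ to be a singleton, and consistency with the winning strategy forcing that singleton back into $U$; your choice to cover the responses $G_{s}$ rather than the sets $F_{s}$ matches the paper's covering of the $V_{t}$'s. For part (2) you build the same topology $\tau_{X}$ (generated by the Polish topology together with the $\psi$-preimages of $\tau$), but you verify $\langle\omega^{\omega},\tau_{X}\rangle\in\mathcal{S}^{*}$ by a genuinely different route: identifying it with the graph of $\psi$ inside $\omega^{\omega}\times\langle X,\tau\rangle$ and invoking permanence properties — a product of a second countable space with an $\mathsf{HL}$ space is $\mathsf{HL}$, a product of strong Choquet spaces is strong Choquet, and a nonempty $G_{\delta}$ subspace of a strong Choquet space is strong Choquet, the graph being closed (hence $G_{\delta}$) already in the metrizable product $\omega^{\omega}\times\langle X,\rho\rangle$. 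The paper instead checks strong Choquetness by hand, interleaving a winning strategy for $G^{s}_{\langle X,\tau\rangle}$ with one for $G^{s}_{\langle\omega^{\omega},\tau_{P}\rangle}$ and having Player II respond with $V_{n}''\cap f^{-1}[V_{n}']$; its closing step, that $\bigcap_{n}\left(N_{s_{n}}\cap f^{-1}[\{y\}]\right)\neq\emptyset$ because $f^{-1}[\{y\}]$ is closed, plays the same role as your $G_{\delta}$-subspace reduction. Your version is more modular and has the side benefit of making the $\mathsf{HL}$ verification, which the paper dismisses as ``easy to see,'' explicit. Both arguments are correct.
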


\begin{proof}

To prove $(1)$, let $A \in \tau$. We will show that $A$ is the Souslin operation of a family of closed sets in $\langle \hat{X},\hat{\rho}\rangle$. Since $A$ is an open set in the strong Choquet space $\langle X,\tau \rangle$, the subspace $\langle A, \restr{\tau}{A}\rangle$ is also a strong Choquet space. Let $\sigma$ be a winning strategy for Player II in the game $G^{s}_{A}$. Recursively on $t \in \omega^{<\omega}$ we construct a subtree $S \subseteq \sigma$ of sequences of the form $s_{t}:=\langle \langle x_{\restr{t}{1}}, U_{\restr{t}{1}}\rangle, V_{\restr{t}{1}},\dots$,$\langle x_{t}, U_{t}\rangle, V_{t}\rangle$ for $t \in \omega^{<\omega} \setminus \{\emptyset\}$ so that: 

\begin{enumerate}[(i)]
    \item $A =\bigcup_{n \in \omega} V_{\langle n \rangle}$;  
    \item $V_{t} = \bigcup_{n \in \omega} V_{t^{\smallfrown} n}$ 
          and $\mathsf{diam}_{\rho}(U_{t}) < 1/2^{|t|}$, for $t \in \omega^{<\omega} \setminus \{\emptyset\}$.
\end{enumerate}

\medskip

To define $s_{t}=\langle \langle x_{t}, U_{t}\rangle,V_{t}\rangle$ for $|t| = 1$, consider elements $\langle \langle x_{0}, U_{0}\rangle,V_{0}\rangle \in \sigma$ where $\mathsf{diam}_{\rho}(U_{0}) < 2^{-1}$. Since $x_{0} \in A$ can be arbitrarily chosen, the collection of $V_{0}$ components form an open cover of $A$. Then, as $\tau$ is $\mathsf{HL}$, there exists a countable subcover $\mathcal{V}_{0}$ consisting of such $V_{0}$ components. Let $\{V_{\langle n \rangle} \colon n \in \omega\}$ be an enumeration of $\mathcal{V}_{0}$. So for any $n \in \omega$, one can find $\langle x_{\langle n \rangle}, U_{\langle n \rangle}\rangle$ such that  
$\langle \langle x_{\langle n \rangle}, U_{\langle n \rangle}\rangle,V_{\langle n \rangle}\rangle \in \sigma$. Thus, we put 
$\langle \langle x_{\langle n \rangle}, U_{\langle n \rangle}\rangle,V_{\langle n \rangle}\rangle \in S$ for $n \in \omega$. In general, if $s_{t}=\langle \langle x_{\restr{t}{1}}, U_{\restr{t}{1}}\rangle, V_{\restr{t}{1}},\dots$,$\langle x_{t}, U_{t}\rangle, V_{t}\rangle$ has been defined so as to satisfy (i) and (ii), then we consider all elements $\langle \langle x, U\rangle,V\rangle$ so that $s_{t}^{\smallfrown}\langle \langle x, U\rangle,V\rangle \in \sigma$ 
where $\mathsf{diam}_{\rho}(U) < 1/2^{|t|+1}$. As before, since $x \in V_{t}$ can be arbitrarily chosen, the family of $V$ components form an open cover of $V_{t}$. Again, as $\tau$ is $\mathsf{HL}$, there is a countable subcover $\mathcal{V}$ consisting of such $V$ components. Let 
$\{ V_{t^{\smallfrown} n} \colon n \in \omega\}$ be an enumeration of $\mathcal{V}$. Then for any $n \in \omega$, one can find $\langle x_{t^{\smallfrown} n}, U_{t^{\smallfrown} n}\rangle$ such that
$s_{t}^{\smallfrown}\langle \langle x_{t^{\smallfrown} n}, U_{t^{\smallfrown} n}\rangle,V_{t^{\smallfrown} n}\rangle \in \sigma$. Therefore, we put $s_{t}^{\smallfrown}\langle \langle x_{t^{\smallfrown} n}, U_{t^{\smallfrown} n}\rangle,V_{t^{\smallfrown} n}\rangle \in S$ for $n \in \omega$.

\medskip

Let $\{P_{t} \colon  t \in \omega^{<\omega}\}$ be such that $P_{\emptyset} = \overline{U}_{\emptyset}^{\rho}$ where $U_{\emptyset}:=A$, and $P_{t}=\overline{U}_{t}^{\rho}$ for $t \in \omega^{<\omega} \setminus \{\emptyset\}$. Then for any $t \in \omega^{<\omega}$, there is a closed subset $\hat{P}_{t} \subseteq \hat{X}$ such that $P_{t} = \hat{P}_{t} \cap X$. We claim that $A$ is obtained from 
$\hat{\mathcal{P}}=\{\hat{P}_{t} \colon t \in \omega^{<\omega}\}$ by 
the Souslin operation, \emph{i.e.,} $A = \mathcal{A}\hat{\mathcal{P}} = \bigcup_{z \in \omega^{\omega}}\bigcap_{n \in \omega} \hat{P}_{\restr{z}{n}}$. First we show that $A \subseteq \mathcal{A}\hat{\mathcal{P}}$. Let $x \in A$. By our construction, $\{U_{t} \colon t \in \omega^{n}\}$ form an open cover of $A$ for every $n \in \omega$. Then there exists $z \in \omega^{\omega}$ such that 
\[
x \in \bigcap_{n \in \omega} U_{\restr{z}{n}} \subseteq \bigcap_{n \in \omega}  
\overline{U}_{\restr{z}{n}}^{\rho}=\bigcap_{n \in \omega}P_{\restr{z}{n}}  \subseteq \bigcap_{n \in \omega} \hat{P}_{\restr{z}{n}}.
\]
For the converse suppose that $x \in \bigcap_{n \in \omega} \hat{P}_{\restr{z}{n}}$ for some $z \in \omega^{\omega}$. Notice that  
\[
\mathsf{diam}_{\hat{\rho}}(\hat{P}_{\restr{z}{n}}) = \mathsf{diam}_{\rho}(P_{\restr{z}{n}}) = \mathsf{diam}_{\rho}(U_{\restr{z}{n}}) < 1/2^{n}
\]
for $n \geqslant 1$. Then, $\{x\} = \bigcap_{n \in \omega} \hat{P}_{\restr{z}{n}}$. On the other hand, since the construction followed a winning strategy of Player II in the game $G^{s}_{A}$, it follows that $\bigcap_{n \in \omega} U_{\restr{z}{n}} \neq \emptyset$. But $\bigcap_{n \in \omega} U_{\restr{z}{n}} \subseteq \bigcap_{n \in \omega} \hat{P}_{\restr{z}{n}}$, so $x \in \bigcap_{n \in \omega} U_{\restr{z}{n}}$ and thus $x \in A$.     

\medskip

To prove $(2)$, by virtue of $(1)$, $\langle X, \restr{\hat{\rho}}{X^{2}}\rangle=\langle X, \rho \rangle$ is analytic in $\langle \hat{X},\hat{\rho} \rangle$. So there is a continuous surjection $f \colon \langle \omega^{\omega},\tau_{P} \rangle \to \langle X,\rho \rangle$ where $\tau_{P}$ is the usual Polish topology on $\omega^{\omega}$. Then we consider the topology $\tau_{X}$ on $\omega^{\omega}$ generated by $\tau_{P} \cup \{f^{-1}[U]:U \in \tau\}$. We claim that $\langle \omega^{\omega}, \tau_{X} \rangle \in \mathcal{S}^{*}$. Indeed, it is clear that $\tau_{X}$ is a submetrizable topology. Since $\tau_{P}$ is second countable and $\tau$ is $\mathsf{HL}$, it easy to see that $\tau_{X}$ is also $\mathsf{HL}$.

\medskip

We check now that $\tau_{X}$ is strong Choquet. Let $\sigma^{\prime}$ and  $\sigma^{\prime \prime}$ be winning strategies for Player II in the games $G^{s}_{\langle X,\tau \rangle}$ and $G^{s}_{\langle \omega^{\omega},\tau_{P} \rangle}$ respectively. We describe a winning strategy $\sigma$ for Player II in the game $G^{s}_{\langle \omega^{\omega},\tau_{X} \rangle}$: Player I starts with $\langle x_{0},U_{0}\rangle$. Without loss of generality we may assume that 
$U_{0}=N_{s_{0}} \cap f^{-1}[U_{0}^{\prime}]$ for some $U_{0}^{\prime} \in \tau$ with $\mathsf{diam}_{\rho}(U_{0}^{\prime}) < 1$ and $s_{0} \in \omega^{<\omega}$.\footnote{For $s \in \omega^{<\omega}$ we let $N_{s}=\{x \in \omega^{\omega} \colon s \subseteq x\}$.} In particular, $f(x_{0}) \in U_{0}^{\prime}$ and $x_{0} \in N_{s_{0}}$. Let $V_{0}^{\prime}$ (resp., $V_{0}^{\prime\prime}$) be Player II's response according to $\sigma^{\prime}$ (resp., $\sigma^{\prime\prime}$) when Player I's first move is $\langle f(x_{0}), U_{0}^{\prime}\rangle$ (resp., $\langle x_{0},N_{s_{0}}\rangle$) in $G^{s}_{\langle X, \tau \rangle}$ (resp., $G^{s}_{\langle \omega^{\omega},\tau_{P} \rangle}$). Then in the game $G^{s}_{\langle \omega^{\omega},\tau_{X} \rangle}$ we let $V_{0}:= V_{0}^{\prime\prime} \cap f^{-1}[V_{0}^{\prime}]$ be Player II's first response. In general, suppose $\langle x_{0}, U_{0}\rangle, V_{0}, \dots, \langle x_{n}, U_{n}\rangle, V_{n}$, and $U_{n}^{\prime}, V_{n}^{\prime}, V_{n}^{\prime\prime}, s_{n}$ are given so that 

\begin{enumerate}[(i)]
    \item $U_{n}=N_{s_{n}} \cap f^{-1}[U_{n}^{\prime}]$ for some 
          $s_{n} \in \omega^{<\omega}$ and $U_{n}^{\prime} \in \tau$ with 
          $\mathsf{diam}_{\rho}(U_{n}^{\prime}) < 2^{-n}$;
    \item $V_{n}= V_{n}^{\prime\prime} \cap f^{-1}[V_{n}^{\prime}]$; and  \item $\langle \langle x_{0}, U_{0}\rangle, V_{0}, \dots, \langle x_{n}, U_{n}\rangle, V_{n}\rangle \in \sigma$, $\langle \langle f(x_{0}), U_{0}^{\prime}\rangle, V_{0}^{\prime}, \dots, \langle f(x_{n}), U_{n}^{\prime}\rangle, V_{n}^{\prime}\rangle \in \sigma^{\prime}$, and $\langle \langle x_{0}, N_{s_{0}}\rangle, V_{0}^{\prime\prime}, \dots, \langle x_{n}, N_{s_{n}}\rangle, V_{n}^{\prime\prime}\rangle \in \sigma^{\prime\prime}$.    
\end{enumerate}

Also, assume that in the $(n+1)$-th round Player I has played $\langle x_{n+1}, U_{n+1}\rangle$ in the game $G^{s}_{\langle \omega^{\omega},\tau_{X} \rangle}$. Without loss of generality, we may assume that $U_{n+1}= N_{s_{n+1}} \cap f^{-1}[U_{n+1}^{\prime}] \subseteq V_{n}$ where $s_{n+1} \in \omega^{<\omega}$ and $U_{n+1}^{\prime} \in \tau$ with $2^{-|s_{n+1}|}, \mathsf{diam}_{\rho}(U_{n+1}^{\prime})$ $< 2^{-(n+1)}$. Then in the game  
$G^{s}_{\langle \omega^{\omega},\tau_{X} \rangle}$ (resp., $G^{s}_{\langle \omega^{\omega},\tau_{P} \rangle}$) let Player I play the move $\langle f(x_{n+1}),U_{n+1}^{\prime} \rangle$ (resp., $\langle x_{n+1}, N_{s_{n+1}}\rangle$) in the $(n+1)$-th round. Suppose $V_{n+1}^{\prime}$ (resp., $V_{n+1}^{\prime\prime}$) is Player II's response in the $(n+1)$-th round in the game $G^{s}_{\langle \omega^{\omega},\tau_{X} \rangle}$ (resp., $G^{s}_{\langle \omega^{\omega},\tau_{P} \rangle}$) according to her winning strategy. Then in the game $G^{s}_{\langle \omega^{\omega},\tau_{X} \rangle}$ let Player II's response in the $(n+1)$-th round be $V_{n+1:}=V_{n+1}^{\prime\prime} \cap f^{-1}[V_{n+1}^{\prime}]$. It is then easy to see that the rules of the game are followed, \emph{i.e.,} $x_{n+1} \in V_{n+1} \subseteq U_{n+1}$. Moreover, by construction 
\[
\bigcap_{n \in \omega} V_{n} = \left(\bigcap_{n \in \omega}N_{s_{n}}\right) \cap \left(\bigcap_{n \in \omega}f^{-1}[V_{n}^{\prime}]\right).
\]
Now, since the sequence $\langle \langle f(x_{0}), U_{0}^{\prime}\rangle, V_{0}^{\prime}, \dots, \langle f(x_{n}), U_{n}^{\prime}\rangle, V_{n}^{\prime} \dots \rangle$ followed the winning strategy $\sigma^{\prime}$ and $\mathsf{diam}_{\rho}(V_{n}^{\prime}) \to 0$, we get that $\bigcap_{n \in \omega} U_{n}^{\prime}=\bigcap_{n \in \omega} V_{n}^{\prime}$ is a singleton, say $\{y\}$. Then $f^{-1}[\{y\}]=\bigcap_{n \in \omega}f^{-1}[V_{n}^{\prime}]$ is a closed subset of $\langle \omega^{\omega},\tau_{P} \rangle$. On the other hand, since the sequence $\langle \langle x_{0}, N_{s_0}\rangle, V_{0}^{\prime\prime}, \dots, \langle x_{n}, N_{s_n}\rangle, V_{n}^{\prime\prime} \dots \rangle$ followed the winning strategy $\sigma^{\prime\prime}$, we have that $\bigcap_{n\in \omega}N_{s_n}\neq\emptyset$. Furthermore, $N_{s_{n}} \cap f^{-1}[\{y\}]\neq\emptyset$ for all $n \in \omega$ and, since $f^{-1}[\{y\}]$ is closed, we conclude
\[
\bigcap_{n \in \omega} U_{n} = \bigcap_{n \in \omega} V_{n} = \bigcap_{n \in \omega} \left(N_{s_{n}} \cap f^{-1}[\{y\}]\right) \neq \emptyset.
\]
This shows that the strategy we just described is winning for Player II.
\end{proof}

It is well known that $\mathsf{OGA}(X)$ fails when $X$ has an uncountable set of isolated points (\cite{MR1486583}, p. 79). The following result is a version of this phenomena for a class of submetrizable spaces with uncountable discrete subspaces.

\begin{theorem}
Let $X$ be a submetrizable space with $\rho$ a metric on $X$ witness for the submetrizability of $X$ so that $\langle X,\rho \rangle$ is a separable space. Suppose that there is an uncountable discrete subspace $D \subseteq X$ such that $\overline{D}=\overline{D}^{\rho}$. Then $\mathsf{OGA}(X)$ does not hold.
\end{theorem}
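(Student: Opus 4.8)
The plan is to adapt the classical fact that $\mathsf{OGA}(Z)$ fails when $Z$ has uncountably many isolated points (\cite{MR1486583}, p.~79), carrying it over to the $\tau$-closed subspace $Y:=\overline{D}=\overline{D}^{\rho}$ and then ``spreading'' the witnessing colouring from $Y$ to $X$ by means of $\rho$.

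Two preliminary observations drive the argument. First, since a submetrizable space is Hausdorff, the $\tau$-discreteness of $D$ forces every $d\in D$ to be an isolated point of $\langle Y,\tau\restrict Y\rangle$: if $W_d$ is $\tau$-open with $W_d\cap D=\{d\}$, then $W_d\setminus\{d\}$ is $\tau$-open and can contain no point of $\overline{D}=Y$, so $W_d\cap Y=\{d\}$. Second, I fix an injectively enumerated $D'=\{d_\alpha:\alpha<\omega_1\}\subseteq D$, an injection $e\colon D'\to\mathbb{R}$, and the Sierpi\'nski partition $[D']^2=L_0\cup L_1$ where $\{d_\alpha,d_\beta\}\in L_0$ iff the $\omega_1$-enumeration order and the $e$-order agree on the pair; then neither $L_0$ nor $L_1$ has an uncountable homogeneous subset, since a well-ordered subset of $\mathbb{R}$ (with either orientation) is countable.

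The colouring I would use is this: put $\{x,y\}\in K_0$ iff there is a pair $\{d_\alpha,d_\beta\}\in L_0$ and a matching of $\{x,y\}$ with $\{d_\alpha,d_\beta\}$ --- say $x\leftrightarrow d_\alpha$, $y\leftrightarrow d_\beta$ --- such that $x\in W_{d_\alpha}$, $y\in W_{d_\beta}$, $\rho(x,d_\alpha)<\tfrac12\rho(x,y)$ and $\rho(y,d_\beta)<\tfrac12\rho(x,y)$; and let $K_1=[X]^2\setminus K_0$. Since the $W_d$ are $\tau$-open and $\rho$ is $\tau$-continuous, each clause defines an open subset of $X^2$, so $K_0$ is open. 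Each $L_0$-pair witnesses its own membership (the two strict inequalities reduce to $0<\tfrac12\rho(d_\alpha,d_\beta)$), so $L_0\subseteq K_0$; and since $W_d\cap Y=\{d\}$, for $x,y\in Y$ one has $\{x,y\}\in K_0$ only if $\{x,y\}=\{d_\alpha,d_\beta\}$ for the witnessing pair, i.e.\ $K_0\cap[Y]^2=L_0$. From $L_0\subseteq K_0$ it is immediate that $X$ is not $\sigma$-$K_1$-homogeneous: a $K_1$-homogeneous set meets $D'$ in an $L_1$-homogeneous, hence countable, set, so countably many such sets cannot cover $D'$.

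It remains to rule out an uncountable $K_0$-homogeneous $H$, and I expect this --- specifically, the portion of $H$ lying off $\overline{D}$ --- to be the one genuinely delicate point. If $H\cap Y$ is uncountable, then $[H\cap Y]^2\subseteq K_0\cap[Y]^2=L_0$, so $H\cap Y\subseteq D'$ is an uncountable $L_0$-homogeneous set, contradicting the choice of the partition. If instead $H':=H\setminus Y$ is uncountable, then for each $x\in H'$ we have $x\notin\overline{D}^{\rho}$, so $\delta_x:=\rho(x,D)>0$ --- and this is exactly where the hypothesis $\overline{D}=\overline{D}^{\rho}$ is used. For distinct $x,y\in H'$ the defining clauses for $\{x,y\}\in K_0$ yield (say) $\delta_x\le\rho(x,d_\alpha)<\tfrac12\rho(x,y)$ and likewise $\delta_y<\tfrac12\rho(x,y)$, so $\rho(x,y)>2\max\{\delta_x,\delta_y\}$. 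Picking $n$ with $H'':=\{x\in H':\delta_x>1/n\}$ uncountable then exhibits an uncountable $(2/n)$-separated subset of the separable metric space $\langle X,\rho\rangle$, which is impossible. Hence $\mathsf{OGA}(X)$ fails. (Note that the metric clauses built into $K_0$ do all the metric work, so no $\rho$-shrinking of the $W_d$ is needed; and the equality $\overline{D}=\overline{D}^{\rho}$ is what lets the single set $Y$ serve simultaneously as $\overline{D}^{\tau}$ in the isolated-point half and as $\overline{D}^{\rho}$ in the separation half.)
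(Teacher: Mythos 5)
Your proof is correct and follows essentially the same strategy as the paper's: a Sierpi\'nski coloring on the discrete set is opened up to a coloring of $X$ using the isolating $\tau$-neighborhoods together with $\rho$-proximity conditions, the hypothesis $\overline{D}=\overline{D}^{\rho}$ is used exactly to give points of a putative $K_{0}$-homogeneous set off $\overline{D}$ a positive $\rho$-distance to $D$, and the contradiction comes from an uncountable uniformly $\rho$-separated set in the separable space $\langle X,\rho\rangle$. The only differences are cosmetic (you encode the metric constraint as inequalities $\rho(x,d_\alpha)<\tfrac12\rho(x,y)$ inside $K_{0}$ rather than via shrunken balls $V(x,\rho(x,y)/4)$, and you dispose of $H\cap\overline{D}$ by homogeneity rather than by a pointwise contradiction), so no further comment is needed.
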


\begin{proof}
Without loss of generality we may assume that $D$ has, at most, the size of continuum. For every $x \in D$, let $U_{x}$ be an open set so that $U_{x} \cap D = \{x\}$. Given $\varepsilon > 0$, we put $V(x,\varepsilon) := U_{x} \cap B(x,\varepsilon)$, where $B(x,\varepsilon) = \{y \in X \colon \rho(x,y) < \varepsilon\}$. 

\medskip

Consider the Sierpiński coloring $c \colon [D]^{2} \to 2$ associated with the set $D$, \emph{i.e.,} the coloring $c$ has no uncountable homogeneous set. Let $Z = \{\{x,y\} \in [D]^{2} \colon c(\{x,y\})$ $=0\}$. For each $\{x,y\} \in Z$, define the following open set in $[X]^{2}$:

\begin{align*}
C_{\{x,y\}} &=\left\{\{p,q\} \in [X]^{2} \colon p \in V\left(x,\frac{\rho(x,y)}{4}\right) \, \wedge \, q \in V\left(y,\frac{\rho(x,y)}{4}\right)\right\}\\
&=V\left(x,\frac{\rho(x,y)}{4}\right) \otimes V\left(y,\frac{\rho(x,y)}{4}\right).    
\end{align*}
Applying the triangle inequality, it is easy to see that if $\{p,q\} \in C_{\{x,y\}}$, then 
\[
\rho(p,q) > \frac{\rho(x,y)}{2}.
\]
Define a partition $[X]^{2} = K_{0} \cup K_{1}$ by 
\[
\{p,q\} \in K_{0} \text{ iff there exists } \{x,y\} \in Z \text{ such that } \{p,q\} \in C_{\{x,y\}}. 
\]
It is clear that this is an open partition of $[X]^{2}$ and since $c$ has no uncountable $1$-homogeneous set, it follows that $X$ is not $\sigma$-$K_{1}$-homogeneous.

\medskip

To see that $X$ has no uncountable $K_{0}$-homogeneous set, assume toward a contradiction this is not the case and take an uncountable $K_{0}$-homogeneous set $H$. Since $c$ has no uncountable $0$-homogeneous, we have that $|H \cap D|\leqslant \aleph_{0}$. Thus, taking a subset if necessary, we can assume that $H \cap D = \emptyset$. Moreover, we can assume that $H \cap \overline{D} = \emptyset$. Indeed, suppose that there is a $p \in H \cap \overline{D}$ and choose any point $q \in H \setminus \{p\}$. Then there exists $\{x,y\} \in Z$ such that 
\[
\{p,q\} \in V\left(x,\frac{\rho(x,y)}{4}\right) \otimes V\left(y,\frac{\rho(x,y)}{4}\right) \subseteq K_{0}.
\]
Say $p \in V\left(x,\frac{\rho(x,y)}{4}\right)$. By definition, $V\left(x,\frac{\rho(x,y)}{4}\right) \cap D = \{x\}$. Then since $p \notin D$, we have that $p \in V\left(x,\frac{\rho(x,y)}{4}\right) \setminus \{x\}$ and, as $\left(V\left(x,\frac{\rho(x,y)}{4}\right) \setminus \{x\}\right) \cap D = \emptyset$ we arrive at a contradiction to the assumption that $p \in \overline{D}$.

\medskip

Now, since $\overline{D}=\overline{D}^{\rho}$, it follows that
\[
\overline{D} = \bigcap_{n \geqslant 1} B(D,1/n),
\]
where $B(D,1/n)=\{x \in X \colon \rho(x,D) < 1/n\}$. Thus, by our assumption about the set $H$, we conclude that there is an $n \geqslant 1$ such that $H_{n} := H \cap (X \setminus B(D,1/n))$ is an uncountable set. Now, for $\{p,q\} \in [H_{n}]^{2}$, there exists $\{x,y\} \in Z$ such that $\{p,q\} \in C_{\{x,y\}}$. Then, $\frac{1}{n} \leqslant \rho(x,p),\rho(y,q) < \frac{\rho(x,y)}{4}$. This implies in particular that $\frac{2}{n} < \frac{\rho(x,y)}{2} < \rho(p,q)$. Therefore, $H_{n}$ form an uncountable discrete subspace of $\langle X, \rho \rangle$, which contradicts the fact that $\langle X, \rho \rangle$ is a separable space.
\end{proof}

We have the following immediate corollary.  
 
\begin{corollary}
$\mathsf{OGA}(N)$ and  $\mathsf{OGA}(\mathbb{S} \times \mathbb{S})$ fails
for the Niemytzki plane $N$ and the Sorgenfrey plane $\mathbb{S} \times \mathbb{S}$.\hfill $\square$
\end{corollary}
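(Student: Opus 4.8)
The plan is to derive the corollary from the preceding theorem by verifying, in each of the two cases, that the ambient space is submetrizable via a separable metric and that it contains an uncountable discrete subspace whose closure coincides with its closure in the weaker metric topology.

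\textbf{The Niemytzki plane.} First I would recall that $N$ consists of the closed upper half-plane $\{(x,y) \colon y \geqslant 0\}$, where points with $y > 0$ have their usual Euclidean neighborhoods and each point $p = (x,0)$ on the boundary line $L$ has basic neighborhoods consisting of $\{p\}$ together with an open Euclidean disk tangent to $L$ at $p$. The Euclidean topology on the half-plane is weaker than this topology and is metrized by a separable metric $\rho$ (the restriction of the plane metric), so $N$ is submetrizable with $\langle N, \rho \rangle$ separable. Next I would take $D = L$, the boundary line: it is an uncountable discrete subspace of $N$ (each tangent-disk neighborhood of $(x,0)$ meets $L$ only in $(x,0)$). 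The key point to check is $\overline{D} = \overline{D}^{\rho}$; since $L$ is closed both in the Niemytzki topology and in the Euclidean topology on the half-plane, both closures equal $L$ itself, so the hypothesis of the previous theorem is met and $\mathsf{OGA}(N)$ fails.

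\textbf{The Sorgenfrey plane.} Here I would recall that $\mathbb{S}$ is $\mathbb{R}$ with the topology generated by half-open intervals $[a,b)$, which refines the Euclidean topology; hence $\mathbb{S} \times \mathbb{S}$ refines the Euclidean topology on $\mathbb{R}^2$, which is metrized by a separable metric $\rho$. So $\mathbb{S} \times \mathbb{S}$ is submetrizable with $\langle \mathbb{S} \times \mathbb{S}, \rho \rangle$ separable. For the discrete subspace I would take the antidiagonal $D = \{(x,-x) \colon x \in \mathbb{R}\}$: the basic neighborhood $[x,x+1) \times [-x,-x+1)$ of $(x,-x)$ meets $D$ only in $(x,-x)$, so $D$ is uncountable and discrete in $\mathbb{S} \times \mathbb{S}$. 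Again the crucial verification is $\overline{D} = \overline{D}^{\rho}$: the antidiagonal is Euclidean-closed, so $\overline{D}^{\rho} = D$; and since the Sorgenfrey topology refines the Euclidean one, $\overline{D} \subseteq \overline{D}^{\rho} = D$, whence $\overline{D} = D$ as well. Thus the previous theorem applies and $\mathsf{OGA}(\mathbb{S} \times \mathbb{S})$ fails.

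\textbf{Main obstacle.} The only genuinely substantive point in either case is the equality $\overline{D} = \overline{D}^{\rho}$; everything else is a routine unwinding of the definitions of the Niemytzki and Sorgenfrey topologies. Since $D$ is closed in the coarser metric topology (so $\overline{D}^{\rho} = D$) and the space topology is finer (so $\overline{D} \subseteq \overline{D}^{\rho}$), this equality is immediate once one has identified the right discrete set $D$ in each example. Hence the corollary follows at once by invoking the previous theorem twice.
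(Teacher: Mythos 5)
Your proposal is correct and matches the paper's intent exactly: the paper states this corollary as immediate from the preceding theorem, and your verification of its hypotheses (separable submetrizing Euclidean metric, the boundary line of $N$ and the antidiagonal of $\mathbb{S}\times\mathbb{S}$ as uncountable discrete subspaces that are closed in both topologies, hence satisfy $\overline{D}=\overline{D}^{\rho}$) is precisely the routine check being left to the reader.
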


The end of this section is dedicated to create tools towards a possible answer to Question \ref{Q:oca}. For this, we define the countable c.d.$\pi$-base property. Theorem \ref{ocasor} is inspired by a suggestion of Yinhe Peng for a proof of Corollary \ref{equivalencesorgenfrey}.

\begin{definition}
We say that $X$ has a \emph{countable difference $\pi$-base of countable size} (countable c.d.$\pi$-base) if there is a countable family $\mathcal{V}$ of open sets such that $|U \setminus \mathsf{Int}_{\mathcal{V}}(U)| \leqslant \aleph_{0}$ for every open set $U$, where $\mathsf{Int}_{\mathcal{V}}(U) := \bigcup \{ V \in \mathcal{V} \colon V \subseteq U\}$.
\end{definition}

Notice that if $X$ has a countable c.d.$\pi$-base with dispersion character $\Delta(X) > \aleph_{0}$, then $X$ is necessary a separable space.\footnote{Recall that the \emph{dispersion character} $\Delta(X)$ of a topological space $X$ is the minimum cardinality of a nonempty open set.}

\begin{theorem}[$\mathsf{OGA}$]\label{ocasor}
 Let $X$ be a $\mathsf{HL}$ Hausdorff space and suppose that it has a countable c.d.$\pi$-base. Then $\mathsf{OGA}(X)$ is true.
\end{theorem}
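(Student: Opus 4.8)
The plan is to reduce the given coloring to one which is ``generated by $\mathcal V$'' outside a countable set, to transport that coloring onto a subspace of the Cantor cube, and to invoke the classical $\mathsf{OGA}$ there.

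\medskip

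First I would make some harmless reductions. Fix an open partition $[X]^2=K_0\cup K_1$; if $X$ has an uncountable $K_0$-homogeneous set we are done, so assume it does not. Let $O$ be the union of all countable open subsets of $X$; since $X$ is $\mathsf{HL}$, $O$ is countable, hence $\sigma$-$K_1$-homogeneous, and $X'=X\setminus O$ is a closed subspace which is again $\mathsf{HL}$ Hausdorff, has $\{V\cap X'\colon V\in\mathcal V\}$ as a countable c.d.$\pi$-base, and has $\Delta(X')>\aleph_0$, hence is separable. So, replacing $X$ by $X'$ and discarding empty members of $\mathcal V$, we may assume $X$ is separable, $\Delta(X)>\aleph_0$, and $\mathcal V=\{V_n\colon n\in\omega\}$ with each $V_n$ nonempty. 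Put $G=\{(m,n)\in\omega\times\omega\colon V_m\otimes V_n\subseteq K_0\}$ and $Q_0=\bigcup_{(m,n)\in G}V_m\otimes V_n\subseteq[X]^2$; clearly $Q_0\subseteq K_0$ and $G$ is symmetric. If $(m,n)\in G$ then $V_m\cap V_n$ is $K_0$-homogeneous, hence countable, so $C_0:=\bigcup_{(m,n)\in G}(V_m\cap V_n)$ is countable.

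\medskip

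\textbf{Key Claim.} \emph{There is a countable set $C_1\subseteq X$ such that whenever $\{x,y\}\in K_0$ with $x,y\notin C_1$, then $\{x,y\}\in Q_0$.}

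\noindent I would prove this by contradiction: if no such $C_1$ exists, recursively choose $\langle(x_\alpha,y_\alpha)\colon\alpha<\omega_1\rangle$ with $\{x_\alpha,y_\alpha\}\in K_0\setminus Q_0$ and all the points $x_\alpha,y_\alpha$ distinct, and (by openness of $K_0$ in $X^2$) open sets $A_\alpha\ni x_\alpha$, $B_\alpha\ni y_\alpha$ with $A_\alpha\times B_\alpha\subseteq\{\langle p,q\rangle\colon\{p,q\}\in K_0\}$. For each $\alpha$ one cannot have both $x_\alpha\in\mathsf{Int}_{\mathcal V}(A_\alpha)$ and $y_\alpha\in\mathsf{Int}_{\mathcal V}(B_\alpha)$: otherwise $x_\alpha\in V_m\subseteq A_\alpha$ and $y_\alpha\in V_n\subseteq B_\alpha$ for some $m,n$, whence $V_m\otimes V_n\subseteq K_0$ and $\{x_\alpha,y_\alpha\}\in Q_0$. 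Now one splits into cases. If $y_\alpha\in\mathsf{Int}_{\mathcal V}(B_\alpha)$ for uncountably many $\alpha$, then by countability of $\mathcal V$ and pigeonholing one fixes $n^*$ and an uncountable $I$ with $y_\alpha\in V_{n^*}\subseteq B_\alpha$ for $\alpha\in I$; then $W:=\bigcup_{\alpha\in I}A_\alpha$ is open with $W\times V_{n^*}\subseteq\{\langle p,q\rangle\colon\{p,q\}\in K_0\}$, and applying the c.d. property to $W$ and pigeonholing again yields $m^*$ and an uncountable $I'\subseteq I$ with $x_\alpha\in V_{m^*}\subseteq W$, so $V_{m^*}\otimes V_{n^*}\subseteq K_0$ and $\{x_\alpha,y_\alpha\}\in Q_0$ for $\alpha\in I'$ — contradiction. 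The case where $x_\alpha\in\mathsf{Int}_{\mathcal V}(A_\alpha)$ for uncountably many $\alpha$ is symmetric. The remaining case — uncountably many $\alpha$ with $x_\alpha\notin\mathsf{Int}_{\mathcal V}(A_\alpha)$ \emph{and} $y_\alpha\notin\mathsf{Int}_{\mathcal V}(B_\alpha)$ for every admissible choice of $A_\alpha,B_\alpha$ — is the one I expect to be the main obstacle; here I would use $\mathsf{HL}$ of $X$ to cover $\bigcup_\alpha A_\alpha$ and $\bigcup_\alpha B_\alpha$ by countable subfamilies and pigeonhole so that uncountably many $(x_\alpha,y_\alpha)$ land in a single product $A_\gamma\times B_\gamma$, combined with the standing assumption that $X$ has no uncountable $K_0$-homogeneous set, to again force $\{x_\alpha,y_\alpha\}\in Q_0$ (or else to produce such a homogeneous set outright). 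This step is the one I would work out most carefully.

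\medskip

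Finally I would transfer and apply $\mathsf{OGA}$. Set $C=C_0\cup C_1$ (countable), $Y=X\setminus C$, and define $\phi\colon X\to 2^\omega$ by $\phi(x)(n)=1$ iff $x\in V_n$. Let $M=\phi[Y]\subseteq 2^\omega$ with the (separable metrizable) subspace topology, and put
\[
\bar K_0=\bigl\{\{s,t\}\in[M]^2\colon\ \exists(m,n)\in G\ \bigl((s(m){=}t(n){=}1)\ \vee\ (s(n){=}t(m){=}1)\bigr)\bigr\}.
\]
Each coordinate being clopen in $2^\omega$, $\bar K_0$ is open in $M^2$. Using the Key Claim (forward direction) together with $Q_0\subseteq K_0$ and $C_0\subseteq C$, one checks that for distinct $x,y\in Y$ one has $\{x,y\}\in K_0$ iff $\phi(x)\neq\phi(y)$ and $\{\phi(x),\phi(y)\}\in\bar K_0$. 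Now apply $\mathsf{OGA}$ to $(M,\bar K_0\cup\bar K_1)$. If there is an uncountable $\bar K_0$-homogeneous $\bar H\subseteq M$, then selecting one point of $\phi^{-1}(s)\cap Y$ for each $s\in\bar H$ gives an uncountable $K_0$-homogeneous subset of $X$ (ruled out by assumption, but in any case it is the first alternative of $\mathsf{OGA}(X)$). Otherwise $M=\bigcup_{k\in\omega}\bar H_k$ with each $\bar H_k$ $\bar K_1$-homogeneous; then each $\phi^{-1}[\bar H_k]\cap Y$ is $K_1$-homogeneous, so $Y$ is $\sigma$-$K_1$-homogeneous, and adjoining the countably many singletons $\{c\}$ for $c\in C$ (and the set $O$ removed at the outset) shows $X$ is $\sigma$-$K_1$-homogeneous. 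Either way $\mathsf{OGA}(X)$ holds. \hfill$\square$
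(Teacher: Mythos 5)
Your argument stands or falls with the Key Claim, and the Key Claim is false: the case you flag as ``the main obstacle'' cannot be completed. Here is a counterexample meeting all of your standing hypotheses. Let $E\subseteq(0,1)$ be uncountable, let $X=E\cup(-E)$ carry the Sorgenfrey subspace topology (so $X$ is $\mathsf{HL}$, Hausdorff, and has a countable c.d.$\pi$-base), and declare $\{u,v\}\in K_0$ iff $u$ and $v$ have opposite signs and $u+v\geqslant 0$. This is open in $[X]^2$, since a Sorgenfrey box $[u,u+\varepsilon)\times[v,v+\varepsilon)$ preserves the signs (for small $\varepsilon$) and only increases the sum; every $K_0$-homogeneous set has at most two points, so your standing assumption holds (and $X=E\cup(-E)$ is visibly $\sigma$-$K_1$-homogeneous, so the theorem itself is safe). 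Now $\{x,-x\}\in K_0$ for every $x\in E$, but if $V_m\ni x$, $V_n\ni -x$ and $V_m\otimes V_n\subseteq K_0$, then $V_m\cap E$ can contain no point $u<x$ (else $\{u,-x\}\in V_m\otimes V_n$ while $u-x<0$); hence $x=\min(V_m\cap E)$, and a fixed $V_m$ can serve at most one $x$ in this way. So for \emph{any} countable family $\mathcal V$ of open sets, at most countably many $x\in E$ have $\{x,-x\}\in Q_0$, leaving uncountably many pairwise disjoint pairs in $K_0\setminus Q_0$; no countable $C_1$ can absorb them. (The same computation is what blocks your third case: there $x_\alpha\notin\mathsf{Int}_{\mathcal V}(A_\alpha)$ and $y_\alpha\notin\mathsf{Int}_{\mathcal V}(B_\alpha)$ genuinely happens uncountably often, with no $K_0$-homogeneous set to be extracted.)

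The underlying problem is that approximating $K_0$ by unions of products $V_m\otimes V_n$ of \emph{pairs} of basic sets is too coarse, and no globally countable exceptional set exists. The paper's proof avoids exactly this: it uses the one-sided sets $A_n=\{x\in X\colon\{x\}\otimes V_n\subseteq K_0\}$ (which need not be open in $X$), sets $K_0'=\bigcup_n A_n\otimes V_n\subseteq K_0$, applies $\mathsf{OGA}$ to the second countable topology generated by $\mathcal V\cup\{A_n\colon n\in\omega\}$, and then does \emph{not} compare $K_0$ with $K_0'$ globally: instead, for each $K_1'$-homogeneous piece $X_n$ of the resulting countable cover it discards the (countably many, by $\mathsf{HL}$) non-condensation points of $X_n$ and shows the remainder is genuinely $K_1$-homogeneous. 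The correction is local and per piece, not a single countable set that works for all of $K_0$ at once. In the example above this is exactly what happens: $E$ and $-E$ are already $K_1$-homogeneous even though $Q_0$ misses uncountably many $K_0$-pairs. Your transfer to $2^\omega$ can be salvaged, but only by adding the characteristic functions of the $A_n$ as extra coordinates and replacing the global Key Claim with this per-piece condensation-point argument.
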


\begin{proof}
Let $[X]^{2}=K_{0} \cup K_{1}$ be an open partition. Fix $\mathcal{V}$ a countable family of open sets witness that $X$ has a countable c.d.$\pi$-base. Let $\{V_{n} \colon n \in \omega\}$ be an enumeration of $\mathcal{V}$. Define $A_{n}:=\{x \in X \colon \{x\}\otimes V_{n}\subseteq K_{0}\}$ and  $K_{0}^{\prime} := \bigcup_{n \in \omega} A_{n} \otimes V_{n}$. Note that $K_{0}^{\prime} \subseteq K_{0}$. Consider the topology $\tau^{\prime}$ on $X$ generated by $\mathcal{V} \cup \{A_{n} \colon n \in \omega\}$. It is clear that $\tau^{\prime}$ is a second countable Hausdorff topology and $K_{0}^{\prime}$ is an open set in $[\langle X,\tau^{\prime} \rangle]^{2}$. Thus, $[X]^{2} = K_{0}^{\prime} \cup K_{1}^{\prime}$ define an open partition in $[\langle X,\tau^{\prime} \rangle]^{2}$ with $K_{1}^{\prime} = [X]^{2} \setminus K_{0}^{\prime}$. By the $\mathsf{OGA}$, we have the following alternatives:

\medskip

$(1)$ There is an uncountable $K_{0}^{\prime}$-homogeneous set $H$.
      Then, $[H]^{2} \subseteq K_{0}^{\prime} \subseteq K_{0}$ and hence $H$ is also an uncountable $K_{0}$-homogeneous set.

\medskip
          
$(2)$ There is a family $\{X_{n} \colon n \in \omega\}$ such that 
      $X=\bigcup_{n \in \omega} X_{n}$ and $[X_{n}]^{2} \subseteq K_{1}^{\prime}$ for all $n \in \omega$. Since $X$ is $\mathsf{HL}$, each $X_{n}$ can be written as $P_{n} \cup C_{n}$, with $P_{n}=\{ x \in X_{n} \colon x \text{ is a condensation point of } X_{n}\}$ and $C_{n}$ is a countable open set in $X_{n}$. Then 
      \[
        X = \left( \bigcup_{n \in \omega} P_{n}\right) \cup 
            \left( \bigcup_{n \in \omega} C_{n}\right).
      \]
      Thus, without loss of generality we may assume that each $x \in X_{n}$ is a condensation point of $X_{n}$. We claim that $X_{n} \subseteq K_{1}$ for every $n \in \omega$. Indeed, assume toward a contradiction that there is an $n \in \omega$ such that $[X_{n}]^{2} \cap K_{0} \neq \emptyset$. Let $\{x, y\} \in [X_{n}]^{2} \cap K_{0}$. So, there are disjoint open sets $U_{x}$ and $U_{y}$ so that $\{x,y\} \in U_{x} \otimes U_{y} \subseteq K_{0}$. Define $V_{x}:=\mathsf{Int}_{\mathcal{V}}(U_{x} \setminus \{x\})$ (resp., $V_{y}:=\mathsf{Int}_{\mathcal{V}}(V_{y} \setminus \{y\})$). Since $y$ is a condensation point of $X_{n}$, $|V_{y} \cap X_{n}| > \aleph_{0}$. We take 
      $z \in V_{y} \cap X_{n}$ and $V_{m} \in \mathcal{V}$ such that $z \neq y$ and $z \in V_{m} \subseteq V_{y}$. Then, $\{x\} \otimes V_{m} \subseteq U_{x} \otimes U_{y} \subseteq K_{0}$ and hence $x \in A_{m}$. By definition, $A_{m} \otimes V_{m} \subseteq K_{0}^{\prime}$ and, as $\{x,z\} \in A_{m} \otimes V_{m}$ we arrive at a contradiction to the assumption that $X_{n}$ is a $K_{1}^{\prime}$-homogeneous set.
\end{proof}

\begin{definition}
Given $X \in \mathcal{S}^{*}$ we denoted by \emph{$\mathsf{OGA}_{X}$} the statement: $\mathsf{OGA}(Y)$ holds for every subspace $Y \subseteq X$. 
\end{definition}

\begin{fact}[\cite{MR1486583} 10.1]\label{Fact:OGA reduction}
Let $X$ and $Y$ be topological spaces so that $Y$ is $T_{0}$ and there is a continuous surjection function $f \colon X \to Y$. Then $\mathsf{OGA}(X)$ implies $\mathsf{OGA}(Y)$.\hfill $\square$
\end{fact}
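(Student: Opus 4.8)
The plan is to pull the open coloring on $Y$ back along $f$ and then invoke $\mathsf{OGA}(X)$. Fix an open partition $[Y]^{2}=K_{0}\cup K_{1}$, and write $\widetilde{K_{0}}=\{\orpair{y}{y'}\in Y^{2}\colon\{y,y'\}\in K_{0}\}$ for the corresponding open subset of $Y^{2}$; observe that $\widetilde{K_{0}}$ is automatically disjoint from the diagonal of $Y^{2}$, since no one-element set lies in $[Y]^{2}$. As $f\times f\colon X^{2}\to Y^{2}$ is continuous, $(f\times f)^{-1}[\widetilde{K_{0}}]$ is an open, symmetric subset of $X^{2}$ disjoint from the diagonal of $X^{2}$; hence it equals $\{\orpair{x}{x'}\colon\{x,x'\}\in K_{0}^{X}\}$ for a unique open $K_{0}^{X}\subseteq[X]^{2}$. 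Concretely, for distinct $x,x'\in X$ we have $\{x,x'\}\in K_{0}^{X}$ iff $f(x)\neq f(x')$ and $\{f(x),f(x')\}\in K_{0}$. Setting $K_{1}^{X}=[X]^{2}\setminus K_{0}^{X}$ gives an open partition $[X]^{2}=K_{0}^{X}\cup K_{1}^{X}$ of the pairs from $X$.

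Next I would apply $\mathsf{OGA}(X)$ to this partition and split into its two alternatives. If there is an uncountable $K_{0}^{X}$-homogeneous set $H\subseteq X$, then distinct points of $H$ have distinct $f$-images, so $f\restrict H$ is injective and $f[H]$ is uncountable; moreover, for distinct $x,x'\in H$ one gets $\{f(x),f(x')\}\in K_{0}$, so $f[H]$ is an uncountable $K_{0}$-homogeneous subset of $Y$. If instead $X=\bigcup_{n\in\omega}X_{n}$ with each $[X_{n}]^{2}\subseteq K_{1}^{X}$, then $Y=\bigcup_{n\in\omega}f[X_{n}]$ because $f$ is surjective, and it suffices to check that each $f[X_{n}]$ is $K_{1}$-homogeneous: given distinct $y,y'\in f[X_{n}]$, choose preimages $x,x'\in X_{n}$; then $x\neq x'$, so $\{x,x'\}\in[X_{n}]^{2}\subseteq K_{1}^{X}$, i.e.\ $\{x,x'\}\notin K_{0}^{X}$, and since $f(x)=y\neq y'=f(x')$ the definition of $K_{0}^{X}$ yields $\{y,y'\}\notin K_{0}$, that is, $\{y,y'\}\in K_{1}$. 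Either way the conclusion of $\mathsf{OGA}(Y)$ is verified.

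I do not expect a genuinely hard step here: the argument is essentially a bookkeeping reduction. The one point that requires care is the treatment of the fibres of $f$ — the pairs $\{x,x'\}$ with $f(x)=f(x')$ — which must be placed on the $K_{1}$ side of the pulled-back partition; the verification that $K_{0}^{X}$ is nevertheless open in the paper's sense hinges precisely on $\widetilde{K_{0}}$ missing the diagonal, so that no fibre-pair is inadvertently swept into $K_{0}^{X}$. The $T_{0}$ hypothesis on $Y$ plays no essential role in this direction and is retained only to match the formulation in \cite{MR1486583}.
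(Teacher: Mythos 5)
Your proof is correct: the pulled-back coloring $K_{0}^{X}=(f\times f)^{-1}[\widetilde{K_{0}}]$ is open in the paper's sense precisely because $\widetilde{K_{0}}$ misses the diagonal of $Y^{2}$ (so no fibre-pair lands in $K_{0}^{X}$), and both alternatives of $\mathsf{OGA}(X)$ transfer to $Y$ exactly as you describe, using injectivity of $f$ on a $K_{0}^{X}$-homogeneous set in the first case and surjectivity of $f$ in the second. The paper states this only as a cited fact with no proof, and your argument is the standard reduction one would expect.
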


Thus, $\mathsf{OGA}_{\omega^{\omega}}$ is equivalent to $\mathsf{OGA}$. With this notation we have the following corollary of the previous theorem.

\begin{corollary}\label{equivalencesorgenfrey}
The statements $\mathsf{OGA}$ and $\mathsf{OGA}_\mathbb{S}$ are equivalent.
\end{corollary}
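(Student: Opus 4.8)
The plan is to prove the two implications separately. For $\mathsf{OGA}\Rightarrow\mathsf{OGA}_{\mathbb{S}}$ I would invoke Theorem \ref{ocasor}, and for the converse I would combine Fact \ref{Fact:OGA reduction} with the already-noted equivalence between $\mathsf{OGA}$ and $\mathsf{OGA}_{\omega^{\omega}}$.

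\textbf{$\mathsf{OGA}\Rightarrow\mathsf{OGA}_{\mathbb{S}}$.} By the definition of $\mathsf{OGA}_{\mathbb{S}}$ it suffices to show that every subspace $Y\subseteq\mathbb{S}$ meets the hypotheses of Theorem \ref{ocasor}, i.e.\ that $Y$ is $\mathsf{HL}$, Hausdorff, and carries a countable c.d.$\pi$-base. The first two properties are inherited from $\mathbb{S}$. The key point is the c.d.$\pi$-base, and the natural candidate on $\mathbb{S}$ is $\mathcal{V}=\{[q,r)\colon q<r,\ q,r\in\mathbb{Q}\}$. Given an open $U\subseteq\mathbb{S}$, I would check that $B:=U\setminus\mathsf{Int}_{\mathcal{V}}(U)$ is countable as follows. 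First, each $x\in B$ is irrational: choosing $\delta_x>0$ with $[x,x+\delta_x)\subseteq U$, if $x\in\mathbb{Q}$ then $[x,r)\in\mathcal{V}$ for any rational $r\in(x,x+\delta_x)$ witnesses $x\in\mathsf{Int}_{\mathcal{V}}(U)$. Second, for distinct $x<x'$ in $B$ one must have $x'\geqslant x+\delta_x$: otherwise a rational $q\in(x,x')$ and a rational $r\in(x',x'+\delta_{x'})$ give $[q,r)\subseteq U$ with $x'\in[q,r)\in\mathcal{V}$, contradicting $x'\in B$. Hence $\{(x,x+\delta_x)\colon x\in B\}$ is a pairwise disjoint family of nonempty open intervals, so $B$ is countable. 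For a subspace $Y$, the family $\mathcal{V}_{Y}:=\{V\cap Y\colon V\in\mathcal{V}\}$ works: writing an open subset of $Y$ as $W=U\cap Y$ with $U$ open in $\mathbb{S}$, any $x\in W$ lying in $\mathsf{Int}_{\mathcal{V}}(U)$ also lies in $\mathsf{Int}_{\mathcal{V}_Y}(W)$ (the rational interval around it intersected with $Y$ is contained in $W$), so $W\setminus\mathsf{Int}_{\mathcal{V}_Y}(W)\subseteq(U\setminus\mathsf{Int}_{\mathcal{V}}(U))\cap Y$ is countable. Now Theorem \ref{ocasor} applied under $\mathsf{OGA}$ gives $\mathsf{OGA}(Y)$ for every $Y\subseteq\mathbb{S}$.

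\textbf{$\mathsf{OGA}_{\mathbb{S}}\Rightarrow\mathsf{OGA}$.} Since $\mathsf{OGA}$ is equivalent to $\mathsf{OGA}_{\omega^{\omega}}$, it is enough to establish $\mathsf{OGA}(Z)$ for every $Z\subseteq\omega^{\omega}$. As $\omega^{\omega}$ is homeomorphic to the space of irrationals, $Z$ is homeomorphic to some $Y\subseteq\mathbb{R}$ carrying the usual topology. Let $Y_{\mathbb{S}}$ be $Y$ equipped with the Sorgenfrey subspace topology; this is a subspace of $\mathbb{S}$, so $\mathsf{OGA}(Y_{\mathbb{S}})$ holds by $\mathsf{OGA}_{\mathbb{S}}$. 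The identity map $Y_{\mathbb{S}}\to Y$ is a continuous surjection onto a Hausdorff (hence $T_{0}$) space, so Fact \ref{Fact:OGA reduction} yields $\mathsf{OGA}(Y)$, and thus $\mathsf{OGA}(Z)$. Therefore $\mathsf{OGA}_{\omega^{\omega}}$, and hence $\mathsf{OGA}$, holds.

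The step I expect to be the main obstacle is showing that $\mathbb{S}$, and consequently each of its subspaces, genuinely has a countable c.d.$\pi$-base, i.e.\ the countability of $U\setminus\mathsf{Int}_{\mathcal{V}}(U)$ for arbitrary open $U\subseteq\mathbb{S}$; once this structural fact is secured, both directions are purely formal consequences of Theorem \ref{ocasor}, Fact \ref{Fact:OGA reduction}, and the reduction of $\mathsf{OGA}$ to $\mathsf{OGA}_{\omega^{\omega}}$.
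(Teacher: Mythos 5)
Your proposal is correct and follows essentially the same route as the paper: the forward direction via Theorem \ref{ocasor} after checking that every subspace of $\mathbb{S}$ has a countable c.d.$\pi$-base (a fact the paper merely asserts and you verify correctly with the rational half-open intervals and a disjoint-intervals counting argument), and the converse via Fact \ref{Fact:OGA reduction} applied to the continuous identity from the Sorgenfrey topology to the Euclidean one, combined with the equivalence of $\mathsf{OGA}$ with $\mathsf{OGA}_{\omega^{\omega}}$. No gaps.
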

 
\begin{proof}
 Since the identity $\mathsf{id}_{\mathbb{R}} \colon \mathbb{R} \to \mathbb{R}$ is also continuous with respect to the Sorgenfrey line $\mathbb{S}$, it follows that $\mathsf{OGA}_{\mathbb{S}}$ implies  $\mathsf{OGA}_{\mathbb{R}}$. For the other implication, note that every subspace of $\mathbb{S}$ has a countable c.d.$\pi$-base.
\end{proof}



\section{Consistency of \texorpdfstring{$\mathsf{SOCA}$}{Lg} and \texorpdfstring{$\mathsf{OGA}$}{Lg} for finitely powerfully \texorpdfstring{$\mathsf{HL}$}{Lg} spaces}\label{OGA and SOCA epsilon strong Consistency}

Although our focus during the previous section was to work with the definable version of $\mathsf{OGA}$, we also asked ourselves: What is the limit of the usual forcing proofs of $\mathsf{OGA}$ and $\mathsf{SOCA}$?

\medskip

We find an interesting class of spaces for which the proof of the consistency of $\mathsf{SOCA}$ done by Abraham-Rudin-Shelah in \cite{MR801036} and the proposed proof of $\mathsf{OGA}$ by Kunen \cite{MR2905394} also work. We are not sure if this is the actual limit of the technique, but we believe that is a considerable advance in this direction.

\medskip

We start this section with new definitions:

\begin{definition}
We say that $X$ is a \emph{strong $\epsilon$-space} if and only if $X^{n}$ is $\mathsf{HL}$ space for all $n\in \omega$. We use the name \emph{finitely powerfully $\mathsf{HL}$} for the title of section.
\end{definition}

We will use the  expression \emph{$\mathsf{SOCA}(s\text{-}\epsilon)$} and \emph{$\mathsf{OGA}(s\text{-}\epsilon)$} to abbreviate $\mathsf{SOCA}(X)$ and $\mathsf{OGA}(X)$ for all $X$ that are Hausdorff strong $\epsilon$-spaces, respectively.

\medskip

It is a known result, which Tall and Scheepers strengthened in \cite{MR2720214,MR3180588} to all $\mathsf{HL}$ spaces with every point being $G_{\delta}$,\footnote{This assumption is stronger than $T_1$.} that:

\begin{fact}[\cite{MR980949} 8.12]
 Every Hausdorff $\mathsf{HL}$ space is of size less or equal to $\mathfrak{c}$.\hfill $\square$
\end{fact}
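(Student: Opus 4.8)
The plan is to prove the statement $\lvert X\rvert \leqslant \mathfrak{c}$ for a Hausdorff $\mathsf{HL}$ space $X$ by the classical closing-off (elementary submodel, or equivalently, transfinite recursion) argument that bounds $\lvert X\rvert$ by the hereditary Lindel\"of degree and the pseudocharacter. First I would observe that in a Hausdorff $\mathsf{HL}$ space every point is a $G_\delta$: given $x$, for each $y\neq x$ pick disjoint open $U_y\ni x$, $W_y\ni y$; the family $\{W_y : y\neq x\}$ covers $X\setminus\{x\}$, so by hereditary Lindel\"ofness a countable subfamily $\{W_{y_n}\}$ covers $X\setminus\{x\}$, whence $\{x\}=\bigcap_n (X\setminus \overline{W_{y_n}})$ — actually more simply $\{x\}=\bigcap_n U_{y_n}$ after noting $x\notin \overline{W_{y_n}}$ is not even needed since $U_{y_n}\cap W_{y_n}=\emptyset$ forces $y_n\notin U_{y_n}$; iterating over all $y$ via the countable subcover gives $\{x\}=\bigcap_n U_{y_n}$. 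So $\psi(X)\leqslant\aleph_0$: each $x$ has a countable local pseudobase $\{U^x_n : n\in\omega\}$ with $\bigcap_n U^x_n=\{x\}$.

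Next I would run the standard cardinality bound. Build an increasing continuous chain $\langle M_\alpha : \alpha<\omega_1\rangle$ of elementary submodels of $H(\theta)$ for large regular $\theta$, each of size $\mathfrak{c}$, with $X$, the topology, and a choice function assigning to each point its countable pseudobase all in $M_0$, and with $M_\alpha^{\omega}\cap M_{\alpha+1}\supseteq M_\alpha^\omega$ — or just take a single $M\prec H(\theta)$ with $\lvert M\rvert=\mathfrak{c}$, $[M]^{\omega}\subseteq M$, and everything relevant in $M$. Set $Y=X\cap M$; then $\lvert Y\rvert\leqslant\mathfrak c$, so it suffices to show $Y=X$. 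Suppose $x\in X\setminus Y$. Using that $X$ is hereditarily Lindel\"of, the \emph{trace} of the topology on $Y$ is contained in $M$ in a usable form: the key point is that for each $n$ the open set $U^x_n\cap Y$, being a subset of $Y$ that is open in $Y$ and $Y\subseteq M$, can be covered by countably many members of a basis-like family coded in $M$; more precisely, one shows $\overline{Y}=X$ and then derives a contradiction with $\psi(x,X)\leqslant\aleph_0$.

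The cleanest route to the contradiction: since $\{x\}$ is $G_\delta$, write $\{x\}=\bigcap_n V_n$ with $V_n$ open. For each $n$, the set $X\setminus V_n$ is closed hence $\mathsf{HL}$, and $X\setminus\{x\}=\bigcup_n(X\setminus V_n)$; by elementarity and hereditary Lindel\"ofness each $X\setminus V_n$ has a countable "network trace" in $M$, so $X\setminus\{x\}\subseteq \overline{Y}$, and in fact one gets $Y$ is dense. Then for each $n$ choose (inside $M$, by elementarity, since "there exists a point of $X$ in $V_n$" is true and witnessed in $M$ once $V_n$ can be named there) a point $y_n\in V_n\cap Y$; but the $V_n$ need not be in $M$. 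This is exactly the obstacle. The standard fix is to work not with one model but with the $\omega_1$-chain and a point $x$ together with a strictly increasing sequence $\alpha_n$ such that $x$ is "approximated" at stage $\alpha_n$; equivalently, use that $L(X)\cdot\psi(X)\geqslant\lvert X\rvert$ is the theorem $\lvert X\rvert\leqslant 2^{L(X)\psi(X)}$ of Arhangel'skii specialized. I would instead invoke the cleanest classical fact directly: for Hausdorff $X$, $\lvert X\rvert\leqslant 2^{hL(X)}$ where $hL$ is the hereditary Lindel\"of degree, since $hL(X)=\aleph_0$ gives $\lvert X\rvert\leqslant 2^{\aleph_0}=\mathfrak c$.

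The hard part — and the step I expect to be the real obstacle — is justifying the cardinality inequality $\lvert X\rvert\leqslant 2^{hL(X)}$ without simply citing it: the honest argument is the elementary-submodel closing-off where one must produce, for a putative $x\notin M$, a countable pseudobase of $x$ all of whose members are traces of sets coded in $M$, and this requires the full strength of hereditary Lindel\"ofness (so that every open set is a countable union of "relative basic" open sets whose codes lie in $M$), together with a reflection/chain argument so that the countably many witnessing points land in $M$. I would therefore structure the final write-up as: (1) $\psi(X)\leqslant\aleph_0$ via the disjointness-plus-$\mathsf{HL}$ argument above; (2) every open subset of $X$ is a union of $\leqslant\aleph_0$ members of any given open cover refinement, hence $X$ has a "network-style" countable reflection; (3) take $M\prec H(\theta)$ of size $\mathfrak c$ closed under $\omega$-sequences with $X$ and witnessing functions in $M$, and argue $X\subseteq M$ by showing any $x\notin M$ would have its countable pseudobase witnessed inside $M$, contradicting $x\notin M$ while $\bigcap$ of those sets $=\{x\}$ forces $x\in M$. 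Since the paper only needs this as a cited fact (it is labelled as \cite{MR980949} 8.12), I would in the end present (1) in full and then remark that (2)--(3) is the standard Arhangel'skii-type bound $\lvert X\rvert\leqslant 2^{hL(X)}$, which suffices.
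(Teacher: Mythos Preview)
The paper does not prove this statement at all: it is recorded as a cited fact from \cite{MR980949} with an immediate $\square$, so there is no proof in the paper to compare your proposal against.

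Your proposal is mathematically sound. The observation that in a Hausdorff $\mathsf{HL}$ space every point is a $G_\delta$ (via the disjoint-neighbourhood argument you give) is correct, and from there the classical inequality $\lvert X\rvert \leqslant 2^{hL(X)}$ finishes the job. That said, your write-up meanders: you start a closing-off argument, abandon it, restart with a chain of submodels, identify the obstacle, and then fall back on citing the Arhangel'skii-type bound anyway. Since the paper treats this as a black-box citation, the cleanest thing to do is exactly what you conclude with: prove $\psi(X)\leqslant\aleph_0$ in two lines and then invoke the standard bound $\lvert X\rvert\leqslant 2^{hL(X)\cdot\psi(X)}$ (or simply $\lvert X\rvert\leqslant 2^{hL(X)}$ for Hausdorff spaces). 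There is no need for the intermediate false starts.
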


Fix a topological space $X$, $F\subseteq X^n$, $\mathcal{B}_0$ a basis for the topology of $X$, $\mathcal{B}_0^{n}$ the basis of the topology of $X^{n}$ given by products of elements of $\mathcal{B}_0$ and $K_0$ an open coloring (partition) over $X$.

\begin{definition}[Abraham-Rudin-Shelah \cite{abraham1985consistency}]
We say that $\langle M_{\xi} :\xi\in \omega_{1}\rangle $ is a \emph{tower of models for $X$, $F$, $\mathcal{B}_0$ and $K_0$} if and only if
\begin{enumerate}
	\item There is $\theta>\mathfrak{c}$ such that for all $\xi<\omega_1$, $M_{\xi}$ is an elementary submodel of $H(\theta)$ with $|M_{\xi}|=\aleph_0$, $E, F, K_0, \mathcal{B}_0^{n}\in M_{\xi}$ for all $n\in\omega$ and $M_{\xi}$ countably closed, \emph{i.e.}, if $p\in M_{\xi}$ is countable, then $p\subseteq M_{\xi}$.
	\item $M_{\xi}\in M_{\xi+1}$.
	\item $M_{\gamma}=\bigcup_{\xi\in \gamma}M_{\xi}$ if $\gamma$ is a limit ordinal.
\end{enumerate}
\end{definition}

Notice that if $\mathfrak{c}<\theta$, then $2^{\omega}\in H(\theta)$ and $2^{\omega}\subseteq H(\theta)$. Because of these, given a countable subset $\mathcal{B}$ of the basis, all the open and closed sets of $X^{n}$ generated by $\mathcal{B}$ are in $H(\theta)$ (each one codified by a real number). Furthermore, we know that for $\mathsf{HL}$ spaces all open sets are a countable union of open basic sets. This means that, if $X^{n}$ is $\mathsf{HL}$ and $\theta$ is such that $w(X)^{\aleph_0}\leqslant (2^{\mathfrak{c}})^{\aleph_0}=2^{\mathfrak{c}}<\theta$, then $H(\theta)$ actually contains all open and closed sets of $X^{n}$.

\medskip

Under $\mathsf{CH}$, since $\aleph_1^{\aleph_0}=\aleph_{1}=\mathfrak{c}$ we can ask for a tower with $X$ (a Hausdorff strong $\epsilon$-space with $w(X)\leqslant \aleph_{1}$), $K_0$, $\mathcal{B}$ (a countable collection of basic open sets which union is $K_0$) and $\mathcal{B}_{0}^{n}$ a basis for $X^{n}$ to have all the open and closed sets of $X^{n}$ at some level in the tower. Nevertheless, there are $\mathsf{HL}$ Hausdorff spaces with weight bigger than $\mathfrak{c}$. To solve that, we can do the following trick inspired in the submodel topology:

\begin{lemma}\label{Small weight s-epsilon}
Given a Hausdorff strong $\epsilon$-space $\langle X, \tau \rangle$, there exists a topology $\tau'\subseteq \tau $, at most of size continuum, and subspace $X'\subseteq X$, at most of size continuum, such that $\langle X', \tau'\rangle$ is a Hausdorff strong $\epsilon$-space.
\end{lemma}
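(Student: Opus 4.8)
The plan is to take for $\tau'$ the trace of a small elementary submodel, in exactly the spirit of the submodel topology. First I would fix a large regular cardinal $\theta$ and an elementary submodel $M$ of $H(\theta)$ with $X, \tau \in M$, $|M| = \mathfrak{c}$, and $\mathfrak{c} \subseteq M$; such an $M$ exists because the Skolem hull in $H(\theta)$ of a set of size $\mathfrak{c}$ again has size $\mathfrak{c}$ (the language of set theory being countable). I then set $X' := X$ and let $\tau'$ be the topology generated by $\mathcal{B} := \tau \cap M$. Since $\tau$ and $M$ are both closed under finite intersections and $X \in \tau \cap M$, the family $\mathcal{B}$ is closed under finite intersections and covers $X$, so it is a basis for $\tau'$; and since $\mathcal{B} \subseteq \tau$ and $\tau$ is closed under arbitrary unions, $\tau' \subseteq \tau$.

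The strong $\epsilon$-space property should come essentially for free from the fact that $\tau'$ merely coarsens $\tau$. The key soft observation is: if $\langle Y, \sigma \rangle$ is Lindel\"of and $\sigma' \subseteq \sigma$, then $\langle Y, \sigma' \rangle$ is Lindel\"of, since every $\sigma'$-open cover is already a $\sigma$-open cover; applying this to all subspaces shows that coarsening preserves hereditary Lindel\"ofness. Hence $\langle X, \tau' \rangle$ is $\mathsf{HL}$ because $\langle X, \tau \rangle$ is. For the powers, the product topology on $X^{n}$ generated by $n$ copies of $\tau'$ is coarser than the one generated by $n$ copies of $\tau$ (a product of coarser topologies is coarser), and $\langle X^{n}, \tau^{n} \rangle$ is $\mathsf{HL}$ by hypothesis, so $\langle X, \tau' \rangle^{n}$ is $\mathsf{HL}$ for every $n$; that is, $\langle X, \tau' \rangle$ is a strong $\epsilon$-space.

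I expect the main (if mild) obstacle to be checking that the coarsening $\langle X, \tau' \rangle$ is still Hausdorff, and this is precisely where passing to an elementary submodel — rather than to an arbitrary $\mathfrak{c}$-sized subfamily of $\tau$ — earns its keep. Since $X \in M$ and $|X| \leqslant \mathfrak{c}$ (by the Fact that every Hausdorff $\mathsf{HL}$ space has size at most $\mathfrak{c}$), elementarity produces a surjection $\mathfrak{c} \to X$ lying in $M$, and as $\mathfrak{c} \subseteq M$ this forces $X \subseteq M$. Then for distinct $x, y \in X \subseteq M$, elementarity in $H(\theta)$ (with parameters $x, y, \tau \in M$, using that $\langle X, \tau \rangle$ is Hausdorff) yields disjoint $U, V \in \tau \cap M = \mathcal{B}$ with $x \in U$ and $y \in V$, witnessing Hausdorffness of $\langle X, \tau' \rangle$.

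Finally, for the cardinality bounds: $|X'| = |X| \leqslant \mathfrak{c}$, again by the Fact on Hausdorff $\mathsf{HL}$ spaces, while $|\tau'| \leqslant \mathfrak{c}$ follows from the previous steps — since $\langle X, \tau' \rangle$ is $\mathsf{HL}$, every $\tau'$-open set, being Lindel\"of as a subspace, is a countable union of members of $\mathcal{B}$, so $\tau'$ embeds into $[\mathcal{B}]^{\leqslant \omega}$, and $|[\mathcal{B}]^{\leqslant \omega}| \leqslant \mathfrak{c}^{\omega} = \mathfrak{c}$ because $|\mathcal{B}| \leqslant |M| = \mathfrak{c}$. (If one prefers not to enlarge $M$ so as to contain $\mathfrak{c}$, the identical argument goes through with $X' := X \cap M$ and $\tau' := \{U \cap X' : U \in \tau \cap M\}$, the honest submodel topology.)
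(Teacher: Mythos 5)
Your proof is correct, and it reaches the conclusion by a genuinely different verification than the paper's. The paper also uses an elementary submodel $N$ (of size $\mathfrak{c}$, countably closed and closed under countable sequences, with $\langle X,\tau\rangle \in N$), but it takes $X' = X \cap N$ and $\tau' = (\tau \cap N)\restriction X'$ literally as the trace family, so the technical burden there is to check that this family is already a topology: arbitrary unions are handled by writing $\bigcup_{i} V_i$ as a countable union via $\mathsf{HL}$ and then using $N^\omega \subseteq N$ to see that this countable union lies in $N$; the same computation is what shows open subspaces are Lindel\"of, and Hausdorffness again comes from elementarity. You instead let $\tau'$ be the topology \emph{generated} by $\tau \cap M$, which makes closure under unions free, and you get the strong $\epsilon$-property from the soft observation that coarsening a topology preserves (hereditary) Lindel\"ofness and that a product of coarsenings is a coarsening of the product; you then recover the cardinality bound $|\tau'| \leqslant \mathfrak{c}$ a posteriori from $\mathsf{HL}$ (every $\tau'$-open set is a countable union from the $\mathfrak{c}$-sized basis $\tau \cap M$, and $\mathcal{B}$ is closed under finite intersections since both $\tau$ and $M$ are). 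Your route is slightly more economical in hypotheses — you never need $M$ to be closed under countable sequences, only $\mathfrak{c} \subseteq M$ so that $X \subseteq M$ and Hausdorffness can be witnessed inside $M$ — while the paper's trace construction produces a family that is literally $\tau \cap N$ restricted to $X'$, which is convenient later (e.g.\ in Corollary \ref{Small weight s-epsilon not covered}, where closure of $N$ under countable sequences is used again to preserve the property of not being covered by countably many closed $K_1$-homogeneous sets); if your $M$ is meant to serve in that corollary as well, you would want to add that closure property back in, but for the lemma as stated your argument is complete.
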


\begin{proof}
Let $\theta$ be such that $2^{\mathfrak{c}}<\theta$. Then there is an homoemorphic copy of $\langle X, \tau \rangle$ such that $\langle X, \tau \rangle\in H(\theta)$ and $X,\tau\subseteq H(\theta)$.


Let $N$ be an elementary submodel of $H(\theta)$ of size $2^{\aleph_{0}}$, countably closed, closed under countable sequences (\emph{i.e.}, $N^{\omega}\subseteq N$, this is possible since $\mathfrak{c}^{\omega}=\mathfrak{c}$) and such that $\langle X, \tau \rangle\in N$.


Let $X'=X\cap N$ and $\tau'=(\tau\cap N)\restrict X'$. First we will show that $\tau'$ is a topology. Clearly, it has $X'$, $\emptyset$ and it is closed under finite intersection. To show that it is closed under any size unions of sets first notice that given a countable sequence $U_{n}$ of elements in $\tau'$ we have that $\bigcup_{n\in \omega} U_{n}\in\tau'$. Now, let $\bigcup_{i\in I} U_{i}$ where $U_{i}\in \tau'$. We know that $U_{i}=V_{i}\cap X'$ for some open set $V_{i}$. Let $V=\bigcup_{i\in I} V_{i}\subseteq X$. Since $X$ is $\mathsf{HL}$, there exist $i_n\in I$ such that $V=\bigcup_{n\in \omega} V_{i_{n}}$. Then we have that $V_{i_n}\in N$ and $\{V_{i_n}:n\in \omega\}\in N$. Therefore, 
\[\bigcup_{i\in I} U_{i}=\bigcup_{i\in I} V_{i}\cap X'=\bigcup_{n\in \omega} V_{i_{n}}\cap X'\in (\tau\cap N)\restrict X'= \tau'.\]

Notice that, the above proof also shows that any open subspace is Lindel\"of. So $\langle X', \tau'\rangle$ is $\mathsf{HL}$ and Hausdorff (by elementarity). The same proof, applied to $(X')^n$, shows that $X'$ is an strong $\epsilon$-space.
\end{proof}

Suppose that we have an space $X$ that is Hausdorff and strongly $\epsilon$; and that $K_0$ is the open part of a partition of $[X]^{2}$. If we make sure that $K_0'=K_0\cap [X']^{2}\in [\tau']^{2}$, then $K_0'$ is an open coloring of $\langle X', \tau'\rangle$. Furthermore, any uncountable homogeneous set of $X'$ would also be an uncountable homogeneous set of $X$. So, this give us the liberty to only work with Hausdorff strong $\epsilon$-spaces of size and weight less or equal to $\mathfrak{c}$.

\begin{definition}[Abraham-Rudin-Shelah \cite{abraham1985consistency, kunen2014set}]
Let $X$ be a topological space and $K_0$ the open part of a coloring over $X$. Then we define the folowing forcing:
\[\mathds{P}_{X, K_0}=\{p\in [X]^{<\omega}: \forall x,y\in p (x=y \vee \{x,y\}\in K_0\}.\]
\end{definition}

\begin{lemma}\label{SOCA ccc CH}
$(\mathsf{CH})$ Let $X$ be an uncountable Hausdorff strong $\epsilon$-space such that $w(X)=\mathfrak{c}$ and let $K_0$ be the open part of an open partition. If $X$ has no uncountable homogeneous subset for the partition $K_0\cup K_1$ then there is an uncountable set $X_0\subseteq X$ such that $\mathds{P}_{X_{0}, [X_{0}]^{2} \cap K_0}$ is ccc. 
\end{lemma}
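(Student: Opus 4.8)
The strategy is the classical Abraham--Rudin--Shelah argument, adapted to the strong $\epsilon$-space setting, using the tower of countably closed elementary submodels $\langle M_\xi : \xi \in \omega_1 \rangle$ for $X$, $F$, $\mathcal{B}_0$ and $K_0$ that has just been introduced. First I would fix a tower $\langle M_\xi : \xi \in \omega_1 \rangle$ (legitimate under $\mathsf{CH}$, since $\aleph_1^{\aleph_0}=\mathfrak{c}$, and after applying Lemma \ref{Small weight s-epsilon} we may assume $w(X)=\mathfrak{c}$ so that $H(\theta)$ really contains all open and closed subsets of every $X^n$). For each $\xi$ pick $x_\xi \in X \setminus M_\xi$; this is possible because $X$ is uncountable (indeed of size $\leqslant \mathfrak{c}$ by Fact 4.2, and each $M_\xi$ is countable) and we may arrange the $x_\xi$ to be distinct. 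Set $X_0 = \{ x_\xi : \xi \in \omega_1 \}$, which is uncountable, and let $K_0' = K_0 \cap [X_0]^2$. The claim is that $\mathds{P}_{X_0, K_0'}$ is ccc.

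Towards this, suppose $\{ p_\alpha : \alpha \in \omega_1 \}$ is an uncountable family of conditions; by a $\Delta$-system argument (all $p_\alpha$ have finite size) we may assume they form a $\Delta$-system with root $r$, that $|p_\alpha \setminus r| = k$ is constant, and by further thinning that the conditions are ``aligned'' — writing $p_\alpha \setminus r = \{ x_{\xi^\alpha_1}, \dots, x_{\xi^\alpha_k}\}$ with $\xi^\alpha_1 < \cdots < \xi^\alpha_k$, we may assume $r \subseteq M_{\xi^\alpha_1}$ for all $\alpha$, and that for $\alpha < \beta$ the indices interleave as $\xi^\alpha_1 < \cdots < \xi^\alpha_k < \xi^\beta_1 < \cdots < \xi^\beta_k$. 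The heart of the matter is to show that for suitable $\alpha < \beta$, every $x_{\xi^\alpha_i}$ and $x_{\xi^\beta_j}$ are $K_0$-related, so that $p_\alpha \cup p_\beta$ is a condition (the relations inside $r$, inside $p_\alpha\setminus r$, and between $r$ and each $p_\alpha\setminus r$ are already guaranteed since $p_\alpha, p_\beta$ are conditions). To get this, one works level by level in the tower: the key point is that $x_{\xi}$ was chosen \emph{outside} $M_\xi$, while everything definable from parameters in $M_\xi$ (in particular the tuple $p_\alpha \setminus r$ for $\xi^\alpha_k < \xi$) lies in $M_\xi$; then one uses that a tuple realizing ``being $K_1$-related to some point'' over a model can, by $\mathsf{HL}$-ness of the relevant finite power of $X$ and elementarity, be reflected into the model, contradicting that $x_\xi \notin M_\xi$. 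Here is where the strong $\epsilon$-space hypothesis is essential: the coloring condition involves the open set $K_0 \subseteq X^2$, and after fixing finitely many coordinates one must argue about $\mathsf{HL}$-ness of products $X^m$; only because \emph{every} finite power of $X$ is $\mathsf{HL}$ can we run the reflection/elementarity step, exactly as in \cite{abraham1985consistency} where second countability is used.

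More precisely, the inductive claim to prove (descending on $i$ from $k$ to $1$) is: one can find an uncountable $I \subseteq \omega_1$ such that for all $\alpha < \beta$ in $I$ and all $i, j$, the pair $\{x_{\xi^\alpha_i}, x_{\xi^\beta_j}\}$ with $\xi^\alpha_i < \xi^\beta_j$ lies in $K_0$. For the top level: consider, inside $M_{\xi^\beta_1}$ (which contains $r$ and all the data), the set of tuples $\bar y \in X^k$ extending $r$ to a condition such that $\bar y$ together with $(x_{\xi^\beta_1},\dots,x_{\xi^\beta_k})$ is \emph{not} a condition; this set, being definable, projects to an open-or-analytic set in some $X^m$, and the failure of ccc would force this ``bad'' set to be non-small along the tower. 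Because $X^m$ is $\mathsf{HL}$, an uncountable such set must accumulate, and elementarity of $M_{\xi^\beta_1}$ together with $x_{\xi^\beta_1} \notin M_{\xi^\beta_1}$ yields that $(x_{\xi^\beta_1},\dots)$ can be ``pulled into'' $M_{\xi^\beta_1}$ — contradiction. Iterating this across the $k$ levels produces the desired uncountable $I$, and then any two $p_\alpha, p_\beta$ with $\alpha < \beta \in I$ are compatible. The main obstacle is precisely this reflection step: making rigorous that ``$x_\xi$ avoids everything $M_\xi$ can see'' combines with $\mathsf{HL}$-ness of the finite powers to block an uncountable antichain; this is the technical core inherited from Abraham--Rudin--Shelah, and the contribution here is checking that ``finitely powerfully $\mathsf{HL}$'' is exactly the amount of the separability/second-countability hypothesis that the argument actually consumes.
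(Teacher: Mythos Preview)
Your overall plan---build the tower, pick $x_\xi \notin M_\xi$, apply a $\Delta$-system reduction, and then run an Abraham--Rudin--Shelah reflection argument---is the right one, and matches the paper's architecture. But the sketch of the ``reflection step'' contains a genuine gap.

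You propose to consider, \emph{inside $M_{\xi^\beta_1}$}, the set of tuples $\bar y$ such that $\bar y$ together with $(x_{\xi^\beta_1},\dots,x_{\xi^\beta_k})$ fails to be a condition. But this set is defined from the parameters $x_{\xi^\beta_1},\dots,x_{\xi^\beta_k}$, and by construction $x_{\xi^\beta_1}\notin M_{\xi^\beta_1}$, so the set is \emph{not} in $M_{\xi^\beta_1}$ and no elementarity argument can be run on it directly. The paper avoids this by working instead with the \emph{closure} $F=\overline{A}$ of the set of (tuples coding) conditions in $X^{n+1}$: because $X^{n+1}$ is $\mathsf{HL}$ and $w(X)=\mathfrak{c}=\aleph_1$ under $\mathsf{CH}$, $F$ is determined by countably many basic closed sets and hence lies in some $M_\gamma$. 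Then for $\alpha>\gamma$ the fiber $F_\alpha=\{y:p_\alpha{}^\smallfrown y\in F\}$ is definable from $F$ and $p_\alpha\in M_\alpha$, so $F_\alpha\in M_\alpha$; since $x^{n}_\alpha\in F_\alpha\setminus M_\alpha$, the fiber is uncountable. Now the hypothesis ``no uncountable homogeneous set'' is invoked: $F_\alpha$ is not $K_1$-homogeneous, so one finds $y^0_\alpha,y^1_\alpha\in F_\alpha$ with $\{y^0_\alpha,y^1_\alpha\}\in K_0$, stabilizes the witnessing basic opens using $\mathsf{HL}$-ness of $X^2$, and descends by induction on $n$.

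A second point your sketch elides: before the induction, the paper first uses $\mathsf{HL}$-ness of $X^n$ to thin $A$ so that all conditions lie in a single box $\prod_j U^j$ with $U^i\otimes U^j\subseteq K_0$ for $i\neq j$. This handles all \emph{cross-coordinate} pairs $\{x^i_\alpha,x^j_\beta\}$ ($i\neq j$) at once; the induction is only needed for the \emph{same-coordinate} pairs $\{x^i_\alpha,x^i_\beta\}$. Your inductive claim (``for all $i,j$'') conflates the two cases, and the cross-coordinate reduction is precisely one of the places where ``$X^n$ is $\mathsf{HL}$ for all $n$'' is consumed.
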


\begin{proof}

Let $\mathcal{B}_0$ be a basis for the topology of $X$, $\mathcal{B}$ a collection of disjoint basic open sets such that $K_0$ can be represented as the union of products of elements of $\mathcal{B}$ and let $\langle M_{\xi} : \xi<\omega \rangle$ be a tower of models for $X$, $\emptyset$, $\mathcal{B}_0$ and $K_0$ such that $\mathcal{B}\in M_0$ (to ensure that $K_0$ is open). Since $\aleph_1^{\aleph_0}=\aleph_{1}=\mathfrak{c}$, we know that all elements of $X^{n}$ (an $\mathsf{HL}$ space) and all open and closed sets will appear at some level of the tower.

\medskip

Let $X_{0}=\{x_{\xi}: \xi<\omega_{1}\}$ such that for each $\xi<\omega_{1}$ we have that $x_{\xi}\in M_{\xi+1}\setminus M_{\xi}$. In other words, the first appearance of $x_{\xi}$ is at level $\xi+1$.

\medskip

Suppose that we have an uncountable set $A\subseteq \mathds{P}_{X_{0}, [X_{0}]^{2} \cap K_0}$. Using the $\Delta$-system Lemma, we can assume that all the elements of $A$ have the same size, say $n$, and share a root $r$. Since two elements of this forcing are compatible if and only if its union is $K_0$-homogeneous, we can assume that $r=\emptyset$.

\medskip

Notice that the elements of $X_{0}$ have a natural order (the level in which they appear in the tower). Using that, we can uniquely associate a member of $A$ with an element of $X^{n}$. We just order them according to their order of appearance in the tower (starting from the smallest one). Our next step will be to present some notation and shrink $A$ a little.

\medskip

For $x\in X_{0}$ we let $ht(x)=\xi$ if and only if $x=x_{\xi}$. For $p\in A\subseteq X^{n}$ we write it as $p=p_{\alpha}^{\frown}x^{n-1}_{\alpha} $ where $p_{\alpha}=(x^{0}_{\alpha},..., x^{n-2}_{\alpha})\in X^{n-1}$ and $ht(x^{n-1}_{\alpha})=\alpha$. We shrink $A$ so that $\alpha<ht(x^{0}_{\beta})$ for any $\beta>\alpha$. Since $A$ is uncountable, $\Gamma=\{\alpha: \exists (p\in A) p=p_{\alpha}^{\frown}x^{n-1}_{\alpha}\}$ is unbounded in $\omega_{1}$.

\medskip

Given that each element of $A$ is $K_{0}$-homogeneous, there exists basic open $U^{i}_{\alpha}$, pairwise disjoint, such that $x_{\alpha}^{i}\in U^{i}_{\alpha}$ and $U^{i}_{\alpha}\otimes U^{j}_{\alpha}\subseteq K_0$ whenever $i\neq j$. Since $X^{n}$ is $\mathsf{HL}$, and $\{\prod_{i=0}^{n-1}U^{j}_{\alpha}: \alpha\in \Gamma\}$ is an open cover of $A$, there are $\alpha_{k}$ for $k\in \omega$ such that $\{\prod_{i=0}^{n-1}U^{j}_{\alpha_k}: k\in \omega\}$ covers $A$.

\medskip

Since $A$ is uncountable, there must be $k_0\in \omega$ such that $\prod_{i=0}^{n-1}U^{j}_{\alpha_{k_0}}\cap A$ is uncountable. Without loss of generality, we can shrink $A$ such that for all $\alpha, \beta \in \Gamma$ and $j<n$, we have that $U^{j}_{\alpha}=U^{j}_{\beta}$. We will call these unique open sets $U^{j}$.  Notice that, with this reduction, we know that given $i\neq j$ and $\alpha, \beta\in \Gamma$, $\{x^{i}_{\alpha}, x^{j}_{\beta}\}\in K_{0}$. This means that we only need to find $\alpha_{0}, \beta_{1}\in \Gamma$ such that, for all $i\in n$, $\{x^{i}_{\alpha_0}, x^{i}_{\beta_1}\}\in K_0$.

\medskip

We do this with an induction over $n$:

\medskip

For $n=1$, we have that $\bigcup A=\{x^{0}_{\alpha}: \alpha\in \Gamma\}$ is an uncountable set of $X$. By hypothesis, this is not an homogeneous set, so there exists $\alpha_0, \beta_{1}$ such that $\{x^{0}_{\alpha_0}, x^{0}_{\beta_1}\}\in K_0$. In other words, we have that $\{x^{0}_{\alpha_0}\}\cup\{ x^{0}_{\beta_1}\}\in \mathds{P}_{X_{0}, [X_{0}]^{2} \cap K_0}$.

\medskip

Assuming that it is true for $n$, we will show it for $n+1$. Let $F$ be the closure of $A$ in $X^{n+1}$. Thanks to $\mathsf{CH}$ and $X^{n+1}$ being $\mathsf{HL}$, we know that there exists $\gamma\in \omega_1$ such that $F\in M_{\gamma}$. Then, for all $\alpha\in \Gamma$, $\alpha >\gamma$, we have 
\[F_{\alpha}=\{y\in X: p_{\alpha}^{\frown}y\in F \}\in M_{\alpha}.\]

Notice that $x^{n}_{\alpha}\notin M_{\alpha}$, since $ht(x^{n}_{\alpha})=\alpha$, so $F_{\alpha}\not\subseteq M_{\alpha}$. This shows that $F_{\alpha}\subseteq X$ is uncountable. Using our hypothesis, we know that there are $y^{0}_{\alpha}, y^{1}_{\alpha}\in F_{\alpha}$ such that $\{y^{0}_{\alpha}, y^{1}_{\alpha}\}\in K_0$. Since $K_0$ is open, there are disjoint open sets $V^{0}_{\alpha}, V^{1}_{\alpha}\in \mathcal{B}_0$ such that $\{y^{0}_{\alpha}, y^{1}_{\alpha}\}\in V_{0}^{\alpha}\otimes V_{1}^{\alpha}\subseteq K_0$.  This means that $\bigcup_{\alpha \in \Gamma\setminus (\gamma+1)}\{(y^0_\alpha, y^1_\alpha)\}\subseteq \bigcup_{\alpha \in \Gamma\setminus (\gamma+1)}V_{\alpha}^{0}\times V_{\alpha}^{1}$. Since $X^2$ is $\mathsf{HL}$, we can find $\Lambda \subseteq \Gamma\setminus (\gamma+1)$ uncountable such that $V^{j}_{\alpha}=V^{j}_{\beta}$ for all $j\in \{0,1\}$ and $\alpha, \beta \in \Lambda$. We will call $V^{n}_{j}$ the unique basic open such that $V^{j}_{\alpha}=V^{j}_{\beta}$ for all $j\in \{0,1\}$ and $\alpha, \beta \in \Lambda$.

\medskip

Focus on $B=\{p_{\alpha}: \alpha\in \Lambda\}\subseteq X^{n}$. Since $\Lambda$ is uncountable, $B\subseteq X^{n}$ is also uncountable. Using our induction hypothesis, there exists $\delta_0, \delta_1\in \Lambda$ such that $p_{\delta_0}\cup p_{\delta_1}\in  \mathds{P}_{X_{0}, [X_{0}]^{2} \cap K_0}$. In other words, $\{x^{i}_{\delta_{0}}, x^{i}_{\delta_{1}}\}\in K_0$ for all $i\in n$. Since $K_0$ is open, there are $V^{i}_{j}$ disjoint basic open sets, for $i\in n$ and $j\in \{0,1\}$, such that $x^{i}_{\delta_{j}}\in V^{i}_{j}$ and $V^{i}_{0}\otimes V^{i}_{1}\subseteq K_0$.

\medskip

Finally, notice that $p_{\delta_{j}}^{\frown}y^{j}_{\delta_{j}}\in \prod_{i=0}^{n}V^{i}_{j}$. This shows that $\prod_{i=0}^{n}V^{i}_{j}\cap F\neq \emptyset$. Being $F$ the closure of $A$ in $X^{n+1}$ and $\prod_{i=0}^{n}V^{i}_{j}$ an open set, there must be $\alpha_{0}, \beta_1\in \Gamma$ such that $p_{\alpha_0}^{\frown}x^{n}_{\alpha_0}\in \prod_{i=0}^{n}V^{i}_{0}$ and $p_{\beta_1}^{\frown}x^{n}_{\beta_1}\in \prod_{i=0}^{n}V^{i}_{1}$. This implies that, for all $i\in n$, $\{x^{i}_{\alpha_0}, x^{i}_{\beta_1}\}\in K_0$.

\medskip

Since we have found two compatible members in $A$, it is not an antichain. Therefore, $\mathds{P}_{X_{0}, [X_{0}]^{2} \cap K_0}$ is ccc.
\end{proof}

\begin{theorem}
If $\mathsf{ZFC}+\mathsf{CH}$ is consistent, then $\mathsf{ZFC}$+$\mathsf{SOCA}(s\text{-}\epsilon)$ is also consistent.
\end{theorem}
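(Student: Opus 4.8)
The plan is to run a finite support iteration of ccc forcings of length $\omega_2$ over a model of $\mathsf{CH}$, at each stage shooting an uncountable $K_0$-homogeneous set into some problematic space, in the style of the Abraham-Rudin-Shelah consistency proof of $\mathsf{SOCA}$. First I would fix, in the ground model $V \models \mathsf{CH}$, a bookkeeping function enumerating in $\omega_2$ all pairs $\langle \dot X, \dot K_0 \rangle$ where (names are forced to say that) $\dot X$ is a Hausdorff strong $\epsilon$-space and $\dot K_0$ is the open part of an open partition of $[\dot X]^2$; this is possible because each iterand is ccc of size $\leqslant \mathfrak{c}$, so at stage $\alpha < \omega_2$ the partial model satisfies $\mathsf{CH}$ and there are only $\aleph_1$-many such pairs (of size $\leqslant \mathfrak{c}$) to consider, and a standard $\omega_2$-length bookkeeping catches them all.

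The key point at a successor stage is the following. Suppose at stage $\alpha$ we are handed a Hausdorff strong $\epsilon$-space $X$ together with an open partition $[X]^2 = K_0 \cup K_1$. If $X$ already has an uncountable $K_0$- or $K_1$-homogeneous set, we force trivially. Otherwise, by Lemma \ref{Small weight s-epsilon} we may replace $\langle X, \tau \rangle$ by a subspace $\langle X', \tau' \rangle$ that is still a Hausdorff strong $\epsilon$-space but has size and weight $\leqslant \mathfrak{c}$, and $K_0' := K_0 \cap [X']^2$ is still open in $[\langle X',\tau'\rangle]^2$; note $X'$ still has no uncountable homogeneous set, since homogeneous sets of $X'$ are homogeneous in $X$. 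Since the current model satisfies $\mathsf{CH}$, Lemma \ref{SOCA ccc CH} applies to $X'$ and produces an uncountable $X_0 \subseteq X'$ for which $\mathds{P}_{X_0,\, [X_0]^2 \cap K_0'}$ is ccc. We force with this poset; a standard density argument shows the generic is an uncountable $K_0$-homogeneous subset of $X_0$, hence of $X$. (The generic is uncountable because, by ccc and a density argument over the $\aleph_1$-many points of $X_0$, for each $x \in X_0$ it is dense to add a condition containing $x$ or incompatible with $x$, and no uncountable antichain of singletons can exist.) Finite-support iterations of ccc posets are ccc, so the whole iteration is ccc and preserves cardinals; the final model has $2^{\aleph_0} = \aleph_2$.

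Finally I would verify that $\mathsf{SOCA}(s\text{-}\epsilon)$ holds in the extension $V[G]$, $G$ generic for the length-$\omega_2$ iteration. Let $X \in V[G]$ be a Hausdorff strong $\epsilon$-space and $[X]^2 = K_0 \cup K_1$ an open partition. By Lemma \ref{Small weight s-epsilon} reduce to the case $|X|, w(X) \leqslant \mathfrak{c} = \aleph_2$; then $X$ together with the open partition is coded by a subset of $\omega_2$ and hence (by ccc and a reflection/chain-condition argument, exactly as in Abraham-Rudin-Shelah) appears in some intermediate model $V[G_\alpha]$ with $\alpha < \omega_2$ — more precisely, $X$ and $K_0$ have names using only $\aleph_1$-many antichains, which by ccc are each bounded below $\omega_2$, so by regularity of $\omega_2$ the pair is handled at some stage $\beta < \omega_2$ with $V[G_\beta] \models \mathsf{CH}$. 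At stage $\beta$ the bookkeeping forced us to add an uncountable homogeneous set for $X$, which survives to $V[G]$. Hence $\mathsf{SOCA}(X)$ holds.

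The main obstacle is the reflection step in the last paragraph: one must argue that an arbitrary Hausdorff strong $\epsilon$-space of weight $\leqslant \mathfrak{c}$ in the final model, together with its open coloring, already occurs — in a form to which the ccc Lemma \ref{SOCA ccc CH} applies — at some intermediate $\mathsf{CH}$-stage. This requires care because ``strong $\epsilon$-space'' is a statement about all finite powers being Lindel\"of, i.e. a $\Pi^1_2$-type condition over the coding set, and one must check it is absolute (or at least correctly reflected) between $V[G_\beta]$ and $V[G]$; the standard move is to note that being $\mathsf{HL}$ for $X^n$ is witnessed by a function assigning countable subcovers to open covers, which can be coded by a real, and finite-support ccc iterations add no new such failures below a suitably chosen stage. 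Once the reflection bookkeeping is set up correctly, the rest is the routine Abraham-Rudin-Shelah template together with Lemmas \ref{Small weight s-epsilon} and \ref{SOCA ccc CH}.
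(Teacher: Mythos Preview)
Your overall plan matches the paper's: the paper's proof is literally ``follow the presentation given in either \cite{MR801036} or \cite{MR2905394}'', i.e., run the standard Abraham--Rudin--Shelah finite-support ccc iteration of length $\omega_2$, using Lemma~\ref{Small weight s-epsilon} and Lemma~\ref{SOCA ccc CH} at each stage. So at the level of strategy you are doing exactly what the paper intends.

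There is, however, a genuine gap in your reflection step. In $V[G]$ you apply Lemma~\ref{Small weight s-epsilon} to get $|X|,w(X)\leqslant\mathfrak{c}=\aleph_2$, and then assert that ``$X$ and $K_0$ have names using only $\aleph_1$-many antichains''. That is false when $|X|=\aleph_2$: an object of size $\aleph_2$ cannot reflect into a single intermediate model $V[G_\beta]$ with $\beta<\omega_2$. The correct move is to first pass to an arbitrary subspace $Y\subseteq X$ of size $\aleph_1$. Since ``strong $\epsilon$'' is hereditary, $Y$ is still a Hausdorff strong $\epsilon$-space, and it inherits the open partition and the failure of homogeneity. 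Next, coarsen the topology on $Y$ to one generated by $\aleph_1$-many open sets: take enough opens to keep $Y$ Hausdorff (one separating pair per $\{y,y'\}\in[Y]^2$) and to keep $K_0\cap[Y]^2$ open (countably many suffice, since $Y^2$ is $\mathsf{HL}$). Coarsening preserves $\mathsf{HL}$ in every finite power, so this new $\langle Y,\tau'\rangle$ is still Hausdorff strong $\epsilon$, now with $|Y|,w(Y)\leqslant\aleph_1$. \emph{This} object is coded by $\aleph_1$-many countable antichains and does reflect to some $V[G_\beta]$ where it is caught by the bookkeeping.

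Your worry about absoluteness of ``strong $\epsilon$'' is legitimate, but the resolution is cleaner than your sketch. The point is that \emph{failure} of $\mathsf{HL}$ is upward absolute: $Z$ is not $\mathsf{HL}$ iff $Z$ contains a right-separated sequence of length $\omega_1$, and such a sequence (together with its witnessing opens) persists from $V[G_\beta]$ to $V[G]$ since ccc preserves $\omega_1$. Hence if $Y^n$ is $\mathsf{HL}$ in $V[G]$ for all $n$, the same holds in $V[G_\beta]$, and Lemma~\ref{SOCA ccc CH} applies there. Your phrasing in terms of ``a function assigning countable subcovers, coded by a real'' points in the wrong direction and does not establish what you need.

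A minor bookkeeping point: at stage $\alpha$ there are not $\aleph_1$ but $2^{\aleph_1}$ relevant $\mathds{P}_\alpha$-names to anticipate; you should assume $2^{\aleph_1}=\aleph_2$ in the ground model (harmless for the consistency statement) so that a standard $\omega_2\times\omega_2$ bookkeeping suffices.
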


\begin{proof}
To show the consistency of $\mathsf{SOCA}(s\text{-}\epsilon)$ is it enough to follow the presentation given in either \cite{MR801036} or \cite{MR2905394}.
\end{proof}


It turns out that Lemma \ref{SOCA ccc CH} does all the heavy lifting for two more consistency results.

\medskip

First, we will work with  $\mathsf{OGA}$ for strong $\epsilon$-spaces. Let $X$ be a Hausdorff strongly $\epsilon$-space and $K_0$ the open part of a partition of $[X]^{2}$. Notice that if $H\subseteq X$ is a $K_1$-homogeneous set then the closure of $H$ is also $K_1$-homogeneous. Therefore, being cover by countably many closed $K_1$-homogeneous sets, is a property that is inherit in a submodel closed under countable sequences. Adding this observation to the proofs of Lemma \ref{Small weight s-epsilon} we have that:

\begin{corollary}\label{Small weight s-epsilon not covered}
Given a Hausdorff strong $\epsilon$-space $\langle X, \tau \rangle$  and $K_0$ the open part of a partition of $[X]^{2}$ such that $X$ is not cover by countably many closed $K_1$-homogeneous sets. Then, there exists a topology $\tau'\subseteq \tau $, at most of size continuum, and a subspace $X'\subseteq X$, at most of size continuum, such that $\langle X', \tau'\rangle$ is a Hausdorff strong $\epsilon$-space, $K_0\cap[X']^{2}$ is open and $X'$ is not cover by countably many closed $K_1\cap [X']^{2}$-homogeneous sets.\hfill $\square$
\end{corollary}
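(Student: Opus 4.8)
The plan is to re-run the elementary-submodel construction from the proof of Lemma~\ref{Small weight s-epsilon}, this time also feeding the coloring $K_{0}$ into the submodel and using the observation recorded just before the statement, that the closure of a $K_{1}$-homogeneous set is again $K_{1}$-homogeneous. The point that makes everything go smoothly is that a strong $\epsilon$-space is in particular a Hausdorff $\mathsf{HL}$ space, so $|X|\le\mathfrak{c}$, and likewise $|K_{0}|\le|[X]^{2}|\le\mathfrak{c}$.

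First I would fix $\theta$ with $2^{\mathfrak{c}}<\theta$ and, exactly as in Lemma~\ref{Small weight s-epsilon}, pass to a homeomorphic copy of $\langle X,\tau\rangle$ lying in $H(\theta)$ with $X,\tau\subseteq H(\theta)$ and $K_{0}\in H(\theta)$. Then I would take $N\prec H(\theta)$ with $|N|=\mathfrak{c}$, $N^{\omega}\subseteq N$, containing $\langle X,\tau\rangle$, a basis $\mathcal{B}_{0}$ of $\tau$ and $K_{0}$, and --- since $|X\cup K_{0}|\le\mathfrak{c}=|N|$ and $\mathfrak{c}^{\aleph_{0}}=\mathfrak{c}$ --- also with $X\cup K_{0}\subseteq N$. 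Set $X'=X\cap N=X$ and $\tau'=\tau\cap N$. That $\tau'$ is a topology on $X'$ with $|\tau'|,|X'|\le\mathfrak{c}$ and that $\langle X',\tau'\rangle$ is a Hausdorff strong $\epsilon$-space are verified word for word as in Lemma~\ref{Small weight s-epsilon}: $\mathsf{HL}$-ness of the finite powers $X^{n}$ lets one collapse arbitrary unions of $\tau'$-open sets to countable subunions, which then lie in $N$.

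It remains to check the two new clauses. For openness of the coloring: $X^{2}$ with the $\tau$-product topology is $\mathsf{HL}$ and $K_{0}$ is open in it, so $K_{0}$ is a countable union of boxes $U\times V$ with $U,V\in\mathcal{B}_{0}$; by elementarity (with $K_{0},\mathcal{B}_{0}\in N$) such a countable witnessing family can be found inside $N$, and then --- $N$ being countably closed --- each of its countably many sides $U,V$ lies in $\tau\cap N=\tau'$, so $K_{0}$ is open in the $\tau'$-product; thus $K_{0}\cap[X']^{2}=K_{0}$ is an open coloring of $\langle X',\tau'\rangle$. For the covering clause, suppose toward a contradiction that $X'=\bigcup_{n\in\omega}F_{n}$ with each $F_{n}$ closed in $\tau'$ and $(K_{1}\cap[X']^{2})$-homogeneous; since $X'=X$ we have $K_{1}\cap[X']^{2}=K_{1}$, so $[F_{n}]^{2}\subseteq K_{1}$, and since $\tau'\subseteq\tau$ every $\tau'$-closed set is $\tau$-closed, so $\langle F_{n}:n\in\omega\rangle$ is a cover of $X$ by countably many $\tau$-closed $K_{1}$-homogeneous sets, contradicting the hypothesis on $X$.

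The only delicate decision is to take $X'=X$ rather than a proper $X'=X\cap N$. With a proper $X'$ one would be forced to lift a $\tau'$-closed, $K_{1}$-homogeneous $F_{n}=C_{n}\cap X'$ (with $C_{n}\in N$, $\tau$-closed, and $K_{1}$-homogeneous in $X$ by an elementarity argument --- this is where the closure observation is genuinely needed when the $F_{n}$ are only assumed $K_{1}$-homogeneous, not closed) to a cover $X=\bigcup_{n}C_{n}$, which would require $X\cap N$ to be $\tau$-dense in $X$ and can fail. Taking $X\subseteq N$, which is permissible precisely because $|X|\le\mathfrak{c}$, removes this obstacle: the reflection of ``$X$ is not covered by countably many closed $K_{1}$-homogeneous sets'' becomes the triviality that, $\tau'$ being coarser than $\tau$, a bad cover of $X'$ is already a bad cover of $X$.
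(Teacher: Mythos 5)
Your argument is correct and stays inside the paper's framework --- the elementary-submodel construction of Lemma~\ref{Small weight s-epsilon} with $K_{0}$ fed in as a parameter --- but it diverges at one point: you additionally require $X\cup K_{0}\subseteq N$ (legitimate, since a Hausdorff $\mathsf{HL}$ space has size at most $\mathfrak{c}$ and $\mathfrak{c}^{\aleph_{0}}=\mathfrak{c}$), so that $X'=X$ and only the topology is thinned. This trivializes the last clause: every $\tau'$-closed set is $\tau$-closed, so a countable $\tau'$-closed $K_{1}$-homogeneous cover of $X'$ would already be a forbidden cover of $X$. The paper instead keeps $X'=X\cap N$ (possibly a proper subset) and transfers the non-covering hypothesis by reflection. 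Your verification that $K_{0}\cap[X']^{2}$ stays open --- a countable family of basic boxes covering $K_{0}$ exists in $N$ by elementarity and $\mathsf{HL}$-ness of $X^{2}$, and is then swallowed by $N^{\omega}\subseteq N$ --- is exactly what the paper's argument needs as well.

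One correction to your closing paragraph: the claim that the proper-$X'$ route ``would require $X\cap N$ to be $\tau$-dense in $X$ and can fail'' is not right. In that setting every $\tau'$-closed subset of $X'$ has the form $C\cap X'$ with $C\in N$ $\tau$-closed; if $C\cap X'$ is $K_{1}$-homogeneous then, by elementarity with parameters $C,X,K_{0}\in N$, so is $C\cap X$ (a $K_{0}$-pair in $C\cap X$ would reflect to one in $C\cap X'$); and since $\langle C_{n}:n\in\omega\rangle\in N$ and $N\models X\subseteq\bigcup_{n}C_{n}$, elementarity gives $X\subseteq\bigcup_{n}C_{n}$ outright --- no density is involved. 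So the paper's version of the corollary is not in danger; your choice $X'=X$ is simply a clean shortcut that lets you skip this reflection step.
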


\medskip

\begin{lemma}\label{Diagonalizing countable covers}
$(\mathsf{CH})$ Given a Hausdorff strong $\epsilon$-space $\langle X, \tau \rangle$  and $K_0$ the open part of a partition of $[X]^{2}$ such that $X$ is not cover by countably many closed $K_1$-homogeneous sets. Then, there exists a size continuum topology $\tau'\subseteq \tau $ and a size continuum subspace $X'\subseteq X$ such that $\langle X', \tau'\rangle$ is a Hausdorff strong $\epsilon$-space, $K_0\cap[X']^{2}$ is open and $X'$ has no $K_1$-uncountable homogeneous sets.
\end{lemma}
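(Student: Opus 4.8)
The plan is to combine the elementary–submodel reduction already isolated in Corollary \ref{Small weight s-epsilon not covered} with a diagonalisation along a tower of countable elementary submodels against all closed $K_1$-homogeneous sets. First I would invoke Corollary \ref{Small weight s-epsilon not covered} to reduce, without loss of generality, to the case $|X|\leqslant\mathfrak{c}$ and $w(X)\leqslant\mathfrak{c}$ (so under $\mathsf{CH}$, $|X|\leqslant\aleph_1$ and $|\tau|\leqslant\aleph_1$), keeping $\langle X,\tau\rangle$ a Hausdorff strong $\epsilon$-space, $K_0$ open, and $X$ not the union of countably many closed $K_1$-homogeneous sets; note that in particular $X$ is uncountable. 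Next I would fix a tower of models $\langle M_\xi:\xi<\omega_1\rangle$ for $X$, $\emptyset$, a basis $\mathcal{B}_0$ and $K_0$, with $\tau\in M_0$, arranged (using $\mathsf{CH}$ together with the fact that in an $\mathsf{HL}$ space every open set is a countable union of basic open sets, exactly as in the discussion preceding Lemma \ref{Small weight s-epsilon}) so that every closed subset of $X$ occurs in some $M_\xi$.

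Writing $G_\xi=\bigcup\{F\in M_\xi:F\subseteq X\text{ is closed in }\langle X,\tau\rangle\text{ and }[F]^2\subseteq K_1\}$, the core of the construction is to pick, for each $\xi<\omega_1$, a point $x_\xi\in(M_{\xi+1}\cap X)\setminus(M_\xi\cup G_\xi)$, and then to set $X'=\{x_\xi:\xi<\omega_1\}$ and $\tau'=\tau\restrict X'$. The step I expect to be the real obstacle — or at least the one that needs the right viewpoint — is seeing that such an $x_\xi$ exists. The key observation is that $M_\xi\cap X$ is itself a countable union of closed $K_1$-homogeneous sets, namely of singletons (each singleton is closed since $X$ is $T_1$, and is trivially $K_1$-homogeneous). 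Hence $(M_\xi\cup G_\xi)\cap X$ is a countable union of closed $K_1$-homogeneous sets, so by hypothesis $X\not\subseteq M_\xi\cup G_\xi$; and since $M_\xi\cup G_\xi\in M_{\xi+1}$ — it is definable from $M_\xi$, $K_1$ and $\tau$, all of which lie in $M_{\xi+1}$ — elementarity of $M_{\xi+1}$ in $H(\theta)$ produces the desired $x_\xi$ inside $M_{\xi+1}$. Because $x_\xi\notin M_\xi\supseteq\{x_\eta:\eta<\xi\}$, the chosen points are pairwise distinct, so $|X'|=\aleph_1=\mathfrak{c}$, and likewise $|\tau'|\leqslant|\tau|\leqslant\mathfrak{c}$.

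It then remains to check the three conclusions. That $\langle X',\tau'\rangle$ is a Hausdorff strong $\epsilon$-space and that $K_0\cap[X']^2$ is open in $[X']^2$ is automatic, since being Hausdorff, having all finite powers $\mathsf{HL}$ (recall $\mathsf{HL}$ is hereditary, and powers of a subspace are subspaces of the powers), and the openness of a coloring are all inherited by subspaces. For the absence of an uncountable $K_1$-homogeneous $Y\subseteq X'$: I would first note that $\overline{Y}$ is again $K_1$-homogeneous, because the set of pairs $(x,y)\in X^2$ with $x=y$ or $\{x,y\}\in K_1$ is precisely the complement in $X^2$ of the open set witnessing that $K_0$ is open, hence is closed, and contains $Y\times Y$, hence contains $\overline{Y}\times\overline{Y}$. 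Then $\overline{Y}$, being a closed subset of $X$, lies in some $M_\gamma$; choosing $\xi>\gamma$ with $x_\xi\in Y$, we get $x_\xi\in Y\subseteq\overline{Y}$ with $\overline{Y}$ a closed $K_1$-homogeneous set in $M_\xi$, so $x_\xi\in G_\xi$, contradicting the choice of $x_\xi$. Finally, I would remark that an uncountable $X'$ with no uncountable $K_1$-homogeneous subset is in particular not the union of countably many closed $K_1$-homogeneous sets, which is the shape in which the lemma will be applied afterwards (to reduce to a space on which Lemma \ref{SOCA ccc CH} applies).
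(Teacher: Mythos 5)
Your proposal is correct and is essentially the paper's argument: under $\mathsf{CH}$ the reduced space has only $\aleph_1$ closed $K_1$-homogeneous sets, one diagonalizes against all of them using the non-covering hypothesis, and the fact that closures of $K_1$-homogeneous sets are $K_1$-homogeneous converts this into the absence of uncountable $K_1$-homogeneous subsets; your tower-of-models bookkeeping is just an alternative packaging of the paper's direct enumeration $\{F_\xi:\xi<\omega_1\}$ and transfinite choice of $x_\alpha\notin\bigcup_{\xi<\alpha}F_\xi$. Your explicit appeal to Corollary \ref{Small weight s-epsilon not covered} (rather than only Lemma \ref{Small weight s-epsilon}) to preserve the hypothesis that $X$ is not covered by countably many closed $K_1$-homogeneous sets is in fact the more careful reading of what the reduction step requires.
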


\begin{proof}
Let $\langle X^{\ast}, \tau^{\ast}\rangle$ be the subset and topology given by Lemma \ref{Small weight s-epsilon}.

\medskip

Since $X^{\ast}$ is $\mathsf{HL}$ with $w(X^{\ast})=\mathfrak{c}=\aleph_1$, then it has $\aleph_1^{\aleph_0}=\mathfrak{c}=\aleph_1$ many closed sets. Let $\{F_{\xi}: \xi<\omega_{1}\}$ be an enumeration of all closed $K_{1}$-homogeneous sets. Since for any $\alpha<\omega_{1}$ we have that $X^{\ast}\neq \bigcup_{\xi<\alpha}F_{\xi}$, we can create a subset $X'\subseteq X$ of size $\aleph_1$ such that $|X'\cap F_{\xi}|\leqslant |\xi|\leq\aleph_0$, Of course, we would use recursion to define such a set.

\medskip

Notice that $X'$ and $\tau'=\tau^{\ast}\restrict X'$ satisfy the desired conditions.
\end{proof}

So, now, we can use Lemma \ref{SOCA ccc CH} over $\langle X', \tau'\rangle$ to create an adequate ccc forcing. Finally, we have that:

\begin{theorem}
If $\mathsf{ZFC}$+$\diamondsuit_{\omega_{2}}$ is consistent, then $\mathsf{ZFC}$+$\mathsf{OGA}(s\text{-}\epsilon)$ is also consistent.
\end{theorem}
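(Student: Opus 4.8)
The plan is to realize the consistency of $\mathsf{OGA}(s\text{-}\epsilon)$ by a countable support iteration of length $\omega_2$ over a model of $\diamondsuit_{\omega_2}$, exactly mirroring Kunen's proposed proof of $\mathsf{Con}(\mathsf{OGA})$ in \cite{MR2905394}, with Lemma \ref{SOCA ccc CH} (via Corollary \ref{Small weight s-epsilon not covered} and Lemma \ref{Diagonalizing countable covers}) supplying the one ingredient that is special to our wider class of spaces. The iteration will be a forcing construction that at each stage picks up an instance of the axiom; the $\diamondsuit_{\omega_2}$ on the final model is used, as usual, for bookkeeping so that every potential counterexample is eventually handled.

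First I would fix, in the ground model satisfying $\mathsf{CH}+\diamondsuit_{\omega_2}$, a Hausdorff strong $\epsilon$-space $X$ together with an open partition $[X]^2=K_0\cup K_1$. If $X$ \emph{is} covered by countably many $K_1$-homogeneous sets there is nothing to do (the closures of these sets are still $K_1$-homogeneous, since $K_0$ is open), so assume it is not. Apply Corollary \ref{Small weight s-epsilon not covered} to pass to a subspace $\langle X',\tau'\rangle$ of size and weight at most $\mathfrak c$ which is still a Hausdorff strong $\epsilon$-space, still has $K_0\cap[X']^2$ open, and is still not covered by countably many closed $K_1$-homogeneous sets; then apply Lemma \ref{Diagonalizing countable covers} to thin further to $\langle X'',\tau''\rangle$ of size continuum that moreover has \emph{no} uncountable $K_1$-homogeneous set. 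Now Lemma \ref{SOCA ccc CH} applies to $X''$ (which is uncountable, Hausdorff, strongly $\epsilon$, of weight $\mathfrak c$, and has no uncountable homogeneous set for either color since the $K_1$-side has been killed and the $K_0$-side would be an uncountable $K_0$-homogeneous subset of $X$, contradicting our case assumption): it yields an uncountable $X_0\subseteq X''$ for which $\mathds{P}_{X_0,[X_0]^2\cap K_0}$ is ccc. Forcing with this poset adds an uncountable $K_0$-homogeneous subset of $X_0\subseteq X$. This is the generic step: the standard argument that a filter meeting the dense sets $D_\beta=\{p:|p|\ge\beta\}$ for $\beta<\omega_1$ produces an uncountable condition works verbatim, and uncountable $K_0$-homogeneity is upward absolute from $X_0$ to $X$.

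The remaining work is the iteration and bookkeeping, which is identical in form to the Abraham–Rudin–Shelah / Kunen template. One runs a countable support iteration $\langle \mathds{P}_\alpha,\dot{\mathds Q}_\alpha:\alpha<\omega_2\rangle$, each $\dot{\mathds Q}_\alpha$ being (a name for) a poset of the form $\mathds{P}_{X_0,[X_0]^2\cap K_0}$ associated to a space-and-coloring pair guessed by $\diamondsuit_{\omega_2}$ at stage $\alpha$; one must check that $\mathsf{CH}$ is preserved at each stage (so that Lemmas \ref{SOCA ccc CH} and \ref{Diagonalizing countable covers} remain applicable at the next stage), that the iteration is proper — indeed ccc iterands with countable support give properness, and one can argue $\aleph_2$-c.c. for the whole iteration so $\omega_2$ is not collapsed — and that after $\omega_2$ steps every open partition of every Hausdorff strong $\epsilon$-space appearing in the extension has been diagonalized. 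The key point for closing off the bookkeeping is that, by the Fact attributed to \cite{MR980949} (8.12), a Hausdorff $\mathsf{HL}$ space has size $\le\mathfrak c=\aleph_1$, so after the reductions above each relevant pair $(X'',K_0\cap[X'']^2)$ is coded by a single subset of $\omega_1$, hence appears in some intermediate model and can be caught by the guessing sequence.

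The main obstacle I expect is not the forcing machinery — that is routine once Lemma \ref{SOCA ccc CH} is in hand — but rather verifying that the class of \emph{Hausdorff strong $\epsilon$-spaces} is suitably absorbed by the iteration: one must confirm that in the final model every such space $X$ with an open partition either is already $\sigma$-$K_1$-homogeneous or else, after the reductions of Corollary \ref{Small weight s-epsilon not covered} and Lemma \ref{Diagonalizing countable covers}, was "visible" to the bookkeeping at some stage $\alpha<\omega_2$ (equivalently, that the coding of such pairs by reals and the $\diamondsuit_{\omega_2}$-guessing genuinely enumerate cofinally often all pairs that survive to the final model). This is exactly the point at which one must re-examine Kunen's argument for $\mathsf{OGA}$ line by line and check that replacing "separable metrizable" by "Hausdorff strong $\epsilon$, of size and weight $\le\mathfrak c$" breaks nothing; since Lemma \ref{SOCA ccc CH} was proved precisely for that class, I expect this to go through, but it is where the care is needed. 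Hence:

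\begin{proof}
We sketch the modifications to the standard proof that $\diamondsuit_{\omega_2}$ implies $\mathsf{Con}(\mathsf{OGA})$; see \cite{MR801036,MR2905394} for the template. Work over a ground model of $\mathsf{CH}+\diamondsuit_{\omega_2}$ and perform a countable support iteration $\langle \mathds{P}_\alpha,\dot{\mathds Q}_\alpha:\alpha<\omega_2\rangle$. Using $\diamondsuit_{\omega_2}$ as a bookkeeping device, arrange that for every Hausdorff strong $\epsilon$-space $X$ and every open partition $[X]^2=K_0\cup K_1$ appearing in the final model, there is a stage at which this pair is considered. At a stage where the considered pair $(X,K_0)$ is such that $X$ is covered by countably many $K_1$-homogeneous sets, let $\dot{\mathds Q}_\alpha$ be trivial. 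Otherwise, working in $V^{\mathds{P}_\alpha}$ (which still satisfies $\mathsf{CH}$, as the iterands are ccc and the iteration is proper of size $\aleph_2$), apply Corollary \ref{Small weight s-epsilon not covered} and then Lemma \ref{Diagonalizing countable covers} to obtain a Hausdorff strong $\epsilon$-space $\langle X'',\tau''\rangle$ of size and weight $\le\mathfrak c$, with $K_0\cap[X'']^2$ open, no uncountable $K_1$-homogeneous set, and such that an uncountable $K_0$-homogeneous subset of $X''$ is one of $X$. Since $X''$ has no uncountable homogeneous set for either colour (the $K_1$-side was killed, and an uncountable $K_0$-homogeneous set would contradict the case hypothesis for $X$), Lemma \ref{SOCA ccc CH} yields an uncountable $X_0\subseteq X''$ with $\mathds{P}_{X_0,[X_0]^2\cap K_0}$ ccc; let $\dot{\mathds Q}_\alpha$ name this poset. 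A generic filter for it meets each dense set $D_\beta=\{p\in\mathds{P}_{X_0,[X_0]^2\cap K_0}:|p|\ge\beta\}$, $\beta<\omega_1$, producing an uncountable $K_0$-homogeneous subset of $X_0\subseteq X$, which remains $K_0$-homogeneous in the final model since being $K_0$-homogeneous is absolute. The ccc of each iterand gives properness of the iteration, and a standard $\Delta$-system argument gives the $\aleph_2$-chain condition, so cardinals are preserved and $\mathsf{CH}$ holds at each intermediate stage. By the Fact from \cite{MR980949} (8.12), each reduced pair $(X'',K_0\cap[X'']^2)$ is coded by a subset of $\omega_1$, hence lies in some $V^{\mathds{P}_\alpha}$ and is caught by the bookkeeping. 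In the final model, every open partition of every Hausdorff strong $\epsilon$-space has an uncountable $K_0$-homogeneous set or is $\sigma$-$K_1$-homogeneous, i.e.\ $\mathsf{OGA}(s\text{-}\epsilon)$ holds.
\end{proof}
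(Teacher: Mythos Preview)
Your approach is the same as the paper's: both feed Lemma~\ref{SOCA ccc CH} (after the reductions of Corollary~\ref{Small weight s-epsilon not covered} and Lemma~\ref{Diagonalizing countable covers}) into Kunen's $\diamondsuit_{\omega_2}$ iteration template, and indeed the paper's own proof is nothing more than a pointer to \cite{MR2905394} together with a one-paragraph sketch of the bookkeeping.

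Two small technical slips to repair. First, the iteration in the Abraham--Rudin--Shelah/Kunen template is a \emph{finite} support iteration of ccc posets, not countable support; the ``standard $\Delta$-system argument'' you invoke for the $\aleph_2$-c.c.\ is precisely the finite-support argument and does not apply to countable support as stated. Second, your justification that $X''$ has no uncountable $K_0$-homogeneous set is wrong: the only case hypothesis in force is that $X$ is not $\sigma$-$K_1$-homogeneous, and this in no way precludes an uncountable $K_0$-homogeneous subset. The fix is trivial---if $X''$ already has an uncountable $K_0$-homogeneous set you are done with that instance and let $\dot{\mathds Q}_\alpha$ be trivial---but as written your case split is incomplete and Lemma~\ref{SOCA ccc CH} cannot be invoked until you add that branch.
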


\begin{proof}
Actually, \cite{MR2905394} has an exposition on how to use Lemma \ref{SOCA ccc CH} with $\diamondsuit_{\omega_2}$ to obtain this result. For completeness, we add an sketch of the proof:

\medskip

\begin{proofsketch}
Remember that all Hausdorff $\mathsf{HL}$ spaces are of size less or equal to $\mathfrak{c}$ and that $\mathfrak{c}=\aleph_2$ in the final model. Since being an strongly $\epsilon$-space is clearly hereditarily, we use $\diamondsuit_{\omega_{2}}$ to guess a sequence of  strongly-$\epsilon$-spaces of size $\aleph_1$ with a coloring over it with no $\sigma$-$K_1$-homogenous cover. This $\diamondsuit$-sequence of length $\aleph_2$ will ensure that any strongly-$\epsilon$ space in the final model either is cover by countably many $K_1$-homogeneous sets or it contains one of the topological spaces in the sequence.
\end{proofsketch}
\end{proof}

To show the consistency of $\mathsf{SOCA}$ for strong $\epsilon$-spaces with a big continuum, we will use axiom $\mathsf{A1}$ as in \cite{abraham1985consistency}. That axiom allows to combine $\mathfrak{c}$ many towers of models into a single one and it survives ccc (essentially, it adds a fast club). So, following the same proofs as in Lemma \ref{SOCA ccc CH} and the forcing of Abraham-Rudin-Shelah \cite{abraham1985consistency}, we have:

\begin{lemma}\label{SOCA ccc A1}
$(\mathsf{A1})$ Let $X$ be an uncountable Hausdorff strong $\epsilon$-space such that $w(X)=\mathfrak{c}$ and let $K_0$ be the open part of an open partition. If $X$ has no uncountable homogeneous subset for the partition $K_0\cup K_1$ then there is an uncountable set $X_0\subseteq X$ such that $\mathds{P}_{X_{0}, [X_{0}]^{2} \cap K_0}$ is ccc.\hfill $\square$
\end{lemma}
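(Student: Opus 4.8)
The plan is to run the proof of Lemma~\ref{SOCA ccc CH} essentially verbatim, replacing every appeal to $\mathsf{CH}$ by the corresponding feature of the axiom $\mathsf{A1}$ of Abraham--Rudin--Shelah~\cite{abraham1985consistency}. First I would isolate where $\mathsf{CH}$ was actually used in that proof. It enters in exactly two places: (a) to guarantee that a tower of models $\langle M_{\xi} : \xi<\omega_1\rangle$ of length $\omega_1$ is already long enough to capture \emph{all} points of $X^{n}$ (for each $n$) together with all open and closed subsets of $X^{n}$ --- this is what the identity $\aleph_1^{\aleph_0}=\aleph_1=\mathfrak{c}$ buys; and (b) in the inductive step on $n$, to produce a level $\gamma<\omega_1$ with $F\in M_{\gamma}$, where $F$ is the closure in $X^{n+1}$ of the uncountable (antichain) set $A$, so that the fibers $F_{\alpha}$ lie in $M_{\alpha}$ for $\alpha>\gamma$.

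Next I would invoke $\mathsf{A1}$ in place of those two uses. By Lemma~\ref{Small weight s-epsilon} we may assume $w(X)\leqslant\mathfrak{c}$ and $|X|\leqslant\mathfrak{c}$; the purpose of $\mathsf{A1}$ is precisely that it permits amalgamating $\mathfrak{c}$-many towers of countable elementary submodels of some $H(\theta)$ into a single increasing continuous chain $\langle M_{\xi} : \xi<\omega_1\rangle$ with the reflection property we need: for every \emph{countably generated} closed subset $C$ of $X^{n}$ (in particular, every closed $K_1$-homogeneous set, and the closure $F$ of the --- after the $\Delta$-system reduction, countable --- set $A$), and for every point of $X^{n}$, there is $\xi<\omega_1$ with $C\in M_{\xi}$, resp.\ with that point in $M_{\xi}$. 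Granting this, the construction of $X_0=\{x_{\xi} : \xi<\omega_1\}$ with $x_{\xi}\in M_{\xi+1}\setminus M_{\xi}$ goes through as before (possible since $X$ is uncountable and each $M_{\xi}$ is countable), and the whole combinatorial core of the proof of Lemma~\ref{SOCA ccc CH} --- the $\Delta$-system reduction to a fixed size $n$ and empty root, the shrinking of $A$ so that the basic neighbourhoods $U^{j}_{\alpha}$ collapse to a single fixed tuple $\langle U^{j}\rangle_{j<n}$, and the induction on $n$ that uses $\mathsf{HL}$-ness of $X^{n+1}$ to pass to the fibers $F_{\alpha}$ --- carries over word for word, with each occurrence of ``by $\mathsf{CH}$, $F\in M_{\gamma}$ for some $\gamma<\omega_1$'' replaced by the reflection property of the $\mathsf{A1}$-tower. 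The conclusion that $A$ has two compatible members, hence is not an antichain, so that $\mathds{P}_{X_0,[X_0]^2\cap K_0}$ is ccc, is then identical.

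The step I expect to be the main obstacle is verifying that $\mathsf{A1}$ really does supply a single tower of length $\omega_1$ with the required reflection property simultaneously for all the relevant objects --- the points of $X$, the $\mathfrak{c}$-many basic open sets coming from $w(X)=\mathfrak{c}$, and the closed sets they generate --- when $\mathfrak{c}>\aleph_1$. Naively this looks impossible, since $\bigcup_{\xi<\omega_1}M_{\xi}$ has size only $\aleph_1$; what rescues the argument is that the proof never needs a level of the tower containing \emph{all} basic open sets, only one containing a given countably generated closed set (the closure of the countable antichain, and the countably many basic sets witnessing its homogeneity). This is exactly the configuration the fast-club ``combining towers'' feature of $\mathsf{A1}$ is built for, and --- as recorded in the remark preceding the statement --- it is preserved by ccc forcing, which is what lets the finite-support ccc iteration of the forcings $\mathds{P}_{X_0,[X_0]^2\cap K_0}$ yield a model of $\mathsf{SOCA}(s\text{-}\epsilon)$ with $\mathfrak{c}$ large. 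The only point where genuine care is needed is checking that the closure operations used in the induction (closing under countable sequences, and taking topological closures of $A$ and of its fibers) stay within the scope of the reflection that $\mathsf{A1}$ guarantees; this is routine but should be spelled out.
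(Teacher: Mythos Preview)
Your approach is exactly the paper's: the lemma is stated with a $\square$ and the surrounding text simply says to rerun the proof of Lemma~\ref{SOCA ccc CH} using the fact that $\mathsf{A1}$ lets one combine $\mathfrak{c}$-many towers into a single $\omega_1$-tower with the needed reflection, and that this survives ccc forcing. Your identification of the two places where $\mathsf{CH}$ was used and how $\mathsf{A1}$ substitutes for them is on target.

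One slip worth correcting: after the $\Delta$-system reduction the set $A$ is \emph{not} countable --- it is still uncountable (that is the whole point, since we are trying to show an uncountable family has two compatible members). The reason the closure $F=\overline{A}\subseteq X^{n+1}$ is nevertheless captured by some level $M_{\gamma}$ of the $\mathsf{A1}$-tower is not that $A$ is countable, but that $X^{n+1}$ is $\mathsf{HL}$: the open complement $X^{n+1}\setminus F$ is a countable union of basic open sets, so $F$ is determined by countably many elements of the fixed basis $\mathcal{B}_0^{n+1}$. Since $w(X)=\mathfrak{c}$ gives only $\mathfrak{c}^{\aleph_0}=\mathfrak{c}$ such closed sets, this is precisely the configuration the ``combine $\mathfrak{c}$-many towers'' feature of $\mathsf{A1}$ handles. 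With that correction your sketch is complete.
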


\begin{theorem}
If $\mathsf{ZFC}$+$\mathsf{A1}$+$2^{\aleph_1}=\kappa$ is consistent, then $\mathsf{ZFC}$+$\mathsf{SOCA}(s\text{-}\epsilon)$+$\mathfrak{c}=\kappa$ is also consistent.\hfill $\square$
\end{theorem}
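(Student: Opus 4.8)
The plan is to run the Abraham--Rudin--Shelah ccc construction of \cite{abraham1985consistency}, using Lemma~\ref{SOCA ccc A1} as its combinatorial engine in exactly the role that Lemma~\ref{SOCA ccc CH} plays under $\mathsf{CH}$, and using Lemma~\ref{Small weight s-epsilon} to bring arbitrary Hausdorff strong $\epsilon$-spaces into the ``small'' shape the engine requires. Start with $V\models\mathsf{ZFC}+\mathsf{A1}+2^{\aleph_1}=\kappa$; from $2^{\aleph_1}=\kappa$ one gets $\kappa^{\aleph_0}=\kappa$ and $\operatorname{cf}(\kappa)>\aleph_1$, and $\mathsf{A1}$ is preserved by ccc forcing and by the ccc iteration we are about to build. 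Perform a finite support ccc iteration $\langle\mathds{P}_\alpha,\dot{\mathds{Q}}_\alpha:\alpha<\kappa\rangle$ with a bookkeeping driven by $\mathsf{A1}$ that presents, at each stage $\alpha$, a $\mathds{P}_\alpha$-name $\dot a$ for a code of a triple $\langle X,\tau,K_0\rangle$. If $\mathds{P}_\alpha$ forces that $\dot a$ codes a Hausdorff strong $\epsilon$-space together with the open part $K_0$ of a partition of $[X]^2$ admitting no uncountable homogeneous set of either colour, work in $V^{\mathds{P}_\alpha}$: first apply Lemma~\ref{Small weight s-epsilon} (arranging, as in the discussion following it, that $K_0\cap[X']^2$ stays open) to pass to a subspace $\langle X',\tau'\rangle$ of size and weight at most $\mathfrak c$ that is again a Hausdorff strong $\epsilon$-space with no uncountable homogeneous set for $K_0\cap[X']^2$; then Lemma~\ref{SOCA ccc A1} yields an uncountable $X_0\subseteq X'$ with $\mathds{P}_{X_0,[X_0]^2\cap K_0}$ ccc, and we take $\dot{\mathds{Q}}_\alpha$ to name it (otherwise let $\dot{\mathds{Q}}_\alpha$ name Cohen forcing). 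Forcing with $\mathds{P}_{X_0,[X_0]^2\cap K_0}$ adjoins an uncountable $K_0$-homogeneous subset of $X_0\subseteq X$, and this is upward absolute.

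Two things must then be checked. For the continuum: the whole iteration is ccc (finite support, each iterand ccc, $\mathsf{A1}$ preserved at limits by the preservation theorem of \cite{abraham1985consistency}), $|\mathds{P}_\kappa|\le\kappa$, the Cohen stages force $\mathfrak c\ge\kappa$, and since $|\mathds{P}_\kappa|^{\aleph_0}\le\kappa^{\aleph_0}=\kappa$ the ccc bound on nice names gives $\mathfrak c\le\kappa$; hence $\mathfrak c=\kappa$ in $V^{\mathds{P}_\kappa}$. For $\mathsf{SOCA}(s\text{-}\epsilon)$: suppose $X$ is a Hausdorff strong $\epsilon$-space in $V^{\mathds{P}_\kappa}$ with an open partition $[X]^2=K_0\cup K_1$ and no uncountable homogeneous set of either colour; passing to an uncountable subspace (still a Hausdorff strong $\epsilon$-space with no uncountable homogeneous set, since both properties are hereditary) we may assume $|X|=\aleph_1$, and then Lemma~\ref{Small weight s-epsilon} lets us code $\langle X,\tau,K_0\rangle$ by a bounded subset $a$ of $\kappa$. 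Since $\operatorname{cf}(\kappa)>\omega$, $a$ lies in some $V^{\mathds{P}_\alpha}$; there the space it decodes is still Hausdorff (absolute), still a strong $\epsilon$-space --- because $\mathds{P}_{[\alpha,\kappa)}$ is ccc and hence has the countable covering property, so Lindel\"ofness of each power reflects down --- and still has no uncountable homogeneous set, since such a set could not be destroyed by later ccc forcing. Thus the bookkeeping hypothesis for $a$ is met at stage $\alpha$ and, $a$ being presented cofinally often, it was handled at some stage $\beta\ge\alpha$; but then $\mathds{Q}_\beta$ adjoined an uncountable $K_0$-homogeneous subset of $X$, contradicting our assumption. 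Hence $\mathsf{SOCA}(s\text{-}\epsilon)$ holds in $V^{\mathds{P}_\kappa}$.

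The real work --- which I would import unchanged from \cite{abraham1985consistency} rather than reprove --- is the preservation of $\mathsf{A1}$ through the finite support ccc iteration, so that Lemma~\ref{SOCA ccc A1} stays applicable at every stage, together with the bookkeeping machinery it powers: this is precisely the ingredient that replaces the $\mathsf{CH}$-plus-$\omega_2$-iteration of the $\mathsf{CH}$ case when $\kappa>\aleph_2$, where a naive enumeration fails because there can be up to $2^{\mathfrak c}$ open colorings, and $\mathsf{A1}$ is what lets the $\mathfrak c$-many towers of countably closed elementary submodels used in Lemma~\ref{SOCA ccc A1} be amalgamated coherently along a single club. Everything genuinely new beyond \cite{abraham1985consistency} is the pair of soft observations that make the reduction legitimate --- that being a strong $\epsilon$-space is hereditary and therefore survives the passage to the submodel topology of Lemma~\ref{Small weight s-epsilon}, and that uncountable homogeneous sets of the reduced subspace lift to the original space, so the ``no uncountable homogeneous set'' hypothesis is inherited --- which is what allows the Abraham--Rudin--Shelah machine, built for sets of reals, to be run for arbitrary Hausdorff strong $\epsilon$-spaces.
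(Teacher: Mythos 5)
Your proposal follows exactly the route the paper intends: the paper proves nothing beyond stating Lemma~\ref{SOCA ccc A1} and deferring to the Abraham--Rudin--Shelah iteration, and your elaboration (reduce via Lemma~\ref{Small weight s-epsilon}, force with $\mathds{P}_{X_0,[X_0]^2\cap K_0}$ at each stage, preserve $\mathsf{A1}$ through the ccc iteration, catch the tail using $\operatorname{cf}(\kappa)>\aleph_1$) is a faithful and correct rendering of that argument. The only step needing slightly more care is the coding/reflection of a space from $V^{\mathds{P}_\kappa}$ into an intermediate model: Lemma~\ref{Small weight s-epsilon} only bounds the weight by $\mathfrak c=\kappa$, so instead one should coarsen the topology on the $\aleph_1$-sized subspace to one generated by the countably many boxes witnessing that $K_0$ is open together with $\aleph_1$ many separating pairs (coarsening preserves $\mathsf{HL}$ of all finite powers), which yields a genuinely bounded code.
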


Analyzing all of our consistency results, we observed that the key lemma for the club method strategy was Lemma \ref{SOCA ccc CH}.\footnote{Remember that the Songfrey line does satisfy $\mathsf{OGA}$ although its square is not Lindel\"of and we cannot use Lemma \ref{SOCA ccc CH} on it.} Looking for ways of improving such Lemma, we noticed that the property of finitely powerfully  $\mathsf{HL}$ (strongly $\epsilon$-space) was only use in three places:

\begin{enumerate}
    \item To reduce $A$ in order to have all $U^{i}_{\alpha}$ being the same. 
    \item To ensure that $K_0$ is the union of countably many basic open sets.
    \item To ensure that there are only continuum many open and closed sets of $X^{n}$
\end{enumerate}

\medskip

It seems that number 2 is unavoidable. Now, for number 1, even if $A$ is in $X^{n}$, we are only looking at relations in $X^{2}$. So, there is a chance that, being careful, $X^{2}$ being $\mathsf{HL}$ is the only requirement to reduce $A$ in order to have all $U^{i}_{\alpha}$ being the same.

\medskip

Finally, number 3 is the trickiest. Although $\aleph_{1}^{\aleph_0}=\mathfrak{c}$ and Lemma \ref{Small weight s-epsilon} helps a lot ensuring a $\aleph_1$ size base, if $X^{n}$ is not $\mathsf{HL}$, then it has $\aleph_1^{\aleph_1}$ many open and closed sets. This could work out using axiom $\mathsf{A1}$ and having $2^{\aleph_0}=2^{\aleph_1}=\kappa>\aleph_1$, but it is not clear how to deal with that in a context with $\mathsf{CH}$.

\medskip

We explored the possibility of only looking at a ``countable closure'', \emph{i.e.}, the closure of all the sets which are complements of open sets created by countably many basic opens, but it is not clear how this will work out.

\section{Under the glasses of \texorpdfstring{$\mathsf{PFA}$}{Lg}}

In Section \ref{OGA and SOCA epsilon strong Consistency} we gave $X$ a really strong covering hypothesis, but being Hausdorff was enough. One way to weaken the covering hypothesis is to make the space $T_{3}$ (regular). In that case, we have the following known result.

\begin{fact}[\cite{MR493957}]\label{HL or HS}
Assume that $X$ is a regular topological space of countable spread. Then:
\begin{enumerate}
    \item If $X$ is not $\mathsf{HL}$ then it has an $\mathsf{S}$-space as a subspace.
    \item If $X$ is not $\mathsf{HS}$ then it has an $\mathsf{L}$-space as a subspace.\hfill $\square$
\end{enumerate}
\end{fact}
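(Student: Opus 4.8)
The two clauses are mirror images of one another under the left--right duality, so the plan is to carry out (1) in detail and read off (2) by interchanging ``open'' with ``closed'' and ``Lindel\"{o}f'' with ``separable''. I would rely on the classical facts that $X$ is not $\mathsf{HL}$ iff $X$ has an uncountable \emph{right-separated} subspace (one that can be well-ordered so that every initial segment is open) and that $X$ is not $\mathsf{HS}$ iff $X$ has an uncountable \emph{left-separated} subspace (every initial segment closed); both of these are standard transfinite recursions. I would also use the obvious remark that a subspace of a right-separated (resp.\ left-separated) space is again right-separated (resp.\ left-separated) via the restricted order. So, assuming $X$ is regular, $s(X)=\aleph_0$, and $X$ is not $\mathsf{HL}$, I would first pick an uncountable right-separated $Y\subseteq X$ and, after replacing it by an initial segment, arrange that the witnessing well-order has order type exactly $\omega_1$. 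Then $Y$ inherits regularity and countable spread, and it is not Lindel\"{o}f because the increasing chain of its proper initial segments is an open cover with no countable subcover. The whole problem then reduces to checking that $Y$ is hereditarily separable; granting that, $Y$ is the desired $\mathsf{S}$-space.

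The key lemma I would isolate is: \emph{an uncountable space that is simultaneously left-separated and right-separated contains an uncountable discrete subspace}, and hence cannot have countable spread. To prove it, I would first reduce to a space $W$ of size $\aleph_1$ on which both separating well-orders have order type $\omega_1$, by passing to a left-separating initial segment of order type $\omega_1$ and then, inside it, to a right-separating initial segment of order type $\omega_1$; this works because an uncountable subset of $\omega_1$ has order type $\omega_1$ and because both separation properties pass to initial segments. Identifying $W$ with $\omega_1$ through the left-separating order, so that every genuine initial segment $\alpha<\omega_1$ is closed in $W$, and writing the right-separating order as $e(0),e(1),\dots,e(\gamma),\dots$ for a bijection $e$ of $\omega_1$, so that every $e[\alpha]=\{e(\gamma):\gamma<\alpha\}$ is open in $W$, one gets a club $C=\{\alpha<\omega_1:e[\alpha]=\alpha\}$ (a standard closing-off argument on $e$ and $e^{-1}$), and for $\alpha\in C$ the ordinal $\alpha$ is then both closed and open in $W$. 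Listing $C=\{\alpha_\xi:\xi<\omega_1\}$ increasingly, the intervals $[\alpha_\xi,\alpha_{\xi+1})$ are nonempty, pairwise disjoint, and clopen, so any transversal of them is an uncountable discrete subspace of $W$. This proves the lemma.

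To finish (1): if $Y$ were not hereditarily separable, some subspace $Y_0\subseteq Y$ would be non-separable, and the recursion picking $z_\alpha\in Y_0\setminus\overline{\{z_\beta:\beta<\alpha\}}^{Y_0}$ through all $\alpha<\omega_1$ (always possible, a countable set being non-dense in $Y_0$) would produce an uncountable left-separated $Z\subseteq Y$. But $Z$, as a subspace of the right-separated space $Y$, is also right-separated, so the lemma gives an uncountable discrete subspace of $Z\subseteq X$, contradicting $s(X)=\aleph_0$. Hence $Y$ is hereditarily separable and therefore an $\mathsf{S}$-space. For (2) I would run the mirror argument: from $X$ not $\mathsf{HS}$ extract an uncountable left-separated $Z\subseteq X$ of order type $\omega_1$; it is regular, of countable spread, and non-separable (any countable subset sits inside a proper closed initial segment), and it is $\mathsf{HL}$ because an uncountable right-separated subspace of $Z$ would again be simultaneously left- and right-separated, contradicting the lemma. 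So $Z$ is the desired $\mathsf{L}$-space.

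I expect the only genuinely nontrivial point to be the key lemma, and within it the passage from ``the two well-orderings of the underlying set'' to ``a club of initial segments are clopen'' --- that is, the club argument; everything else is bookkeeping with the definitions of $\mathsf{HL}$, $\mathsf{HS}$, and of left- and right-separated spaces. It is worth noting that regularity of $X$ plays no role in the combinatorics above; it is used only to guarantee that the extracted subspaces are themselves regular, as required by the definitions of $\mathsf{S}$-space and $\mathsf{L}$-space.
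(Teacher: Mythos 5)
Your proposal is correct: the reduction to right-/left-separated subspaces of type $\omega_1$, the key lemma that a space which is simultaneously left- and right-separated in type $\omega_1$ contains an uncountable discrete subspace (via the club of ordinals closed under the two enumerations, giving clopen blocks), and the final bookkeeping all check out. The paper itself gives no proof of this Fact --- it is quoted from the cited reference with a closing box --- and your argument is precisely the standard one from the $\mathsf{S}$- and $\mathsf{L}$-space literature, so there is nothing in the paper to compare it against.
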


\begin{fact}[Todor\v{c}evi\'{c} \cite{MR980949}]
$\mathsf{PFA}$ implies that there are no $\mathsf{S}$-spaces. In particular, all regular topological spaces with countable spread are $\mathsf{HL}$.\hfill $\square$
\end{fact}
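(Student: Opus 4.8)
This is Todor\v{c}evi\'{c}'s theorem from \cite{MR980949}; we only indicate the shape of the argument. The ``in particular'' clause is an immediate consequence of the first assertion together with Fact \ref{HL or HS}: if $X$ were a regular space of countable spread that is not $\mathsf{HL}$, then by Fact \ref{HL or HS}(1) it would contain an $\mathsf{S}$-space as a subspace, which $\mathsf{PFA}$ forbids. So the content is that $\mathsf{PFA}$ implies there is no $\mathsf{S}$-space.

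Assume toward a contradiction that $X$ is an $\mathsf{S}$-space. Since $X$ is regular and not Lindel\"of it is not hereditarily Lindel\"of, hence contains an uncountable right-separated subspace; passing to it (it is again $\mathsf{HS}$ and regular) we may assume $X=\{x_{\alpha}\colon\alpha<\omega_{1}\}$ and that for each $\alpha$ there is an open $U_{\alpha}\ni x_{\alpha}$ with $x_{\beta}\notin U_{\alpha}$ whenever $\beta>\alpha$. Using regularity, shrink $U_{\alpha}$ to an open $V_{\alpha}$ with $x_{\alpha}\in V_{\alpha}\subseteq\overline{V_{\alpha}}\subseteq U_{\alpha}$. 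Define $c\colon[\omega_{1}]^{2}\to2$ by setting, for $\alpha<\beta$, $c(\{\alpha,\beta\})=0$ iff $x_{\alpha}\notin\overline{V_{\beta}}$. The point of this coloring is that an uncountable $0$-homogeneous set $A\subseteq\omega_{1}$ produces an uncountable \emph{discrete} subspace $\{x_{\alpha}\colon\alpha\in A\}$ of $X$: for $\alpha\in A$ the neighbourhood $V_{\alpha}$ meets $\{x_{\gamma}\colon\gamma\in A\}$ only in $x_{\alpha}$ (if $\gamma\in A$ and $\gamma>\alpha$ then $x_{\gamma}\notin U_{\alpha}\supseteq V_{\alpha}$, and if $\gamma<\alpha$ then $c(\{\gamma,\alpha\})=0$ gives $x_{\gamma}\notin\overline{V_{\alpha}}$). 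This contradicts the fact that $\mathsf{HS}$ spaces have countable spread. So it suffices to produce, under $\mathsf{PFA}$, an uncountable $0$-homogeneous set for $c$.

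The plan for this is a proper-forcing argument. One forces with finite $0$-homogeneous subsets of $\omega_{1}$, decorated by a finite $\in$-increasing chain of countable elementary submodels of some $H(\kappa)$ (containing $X$, the $\langle V_{\alpha}\rangle$ and $c$) as side conditions, arranged so that the models separate the coordinates of the finite working part. The dense sets $D_{\gamma}=\{p\colon p\text{ has a coordinate above }\gamma\}$ for $\gamma<\omega_{1}$ force the resulting generic set to be uncountable, and these $\aleph_{1}$ dense sets suffice, so $\mathsf{PFA}$ applies once the poset is shown to be proper.

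The main obstacle is exactly the verification that this poset is proper: this is the technical heart of \cite{MR980949}, and it is where both the side-condition bookkeeping and the hereditary separability of $X$ get used. One shows, by a reflection/capturing argument through the side-condition models, that every uncountable set of conditions contains two that amalgamate, the hereditary separability being what rules out the obstructing configuration in which all pairs of finite working parts are joined by a color-$1$ edge. Everything else is routine.
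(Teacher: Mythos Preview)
The paper does not prove this fact at all: it is stated as a cited result with a terminal $\square$ and no argument. There is therefore nothing in the paper to compare your proposal against.

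That said, your sketch is a reasonable outline of Todor\v{c}evi\'{c}'s original argument. The derivation of the ``in particular'' clause from Fact~\ref{HL or HS}(1) is exactly how the authors intend it to be read, and the reduction to a right-separated $\mathsf{HS}$ sequence, the coloring $c$, and the side-condition forcing with countable elementary submodels are all faithful to \cite{MR980949}. Your identification of the properness verification as the technical core, with hereditary separability doing the work there, is accurate. For the purposes of this paper, however, a one-line citation is all that is expected; the sketch, while correct in shape, is more than the context calls for.
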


\begin{definition}
We say that a space has the \emph{finitely powerfully countable spread property} if and only all its finite powers have countable spread. 
\end{definition}

\begin{lemma}[$\mathsf{PFA}$] 
Given $X$ a regular space with countable spread such that $X^{2}$ has countable spread, then it has finitely powerfully countable spread.
\end{lemma}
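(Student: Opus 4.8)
The plan is to prove this by induction on $n$, showing that $X^n$ has countable spread for all $n \in \omega$, using the $n=1$ and $n=2$ cases as the base of the induction. The inductive step is where the work lies: assuming $X^n$ has countable spread, we must show $X^{n+1}$ does as well. By Fact \ref{HL or HS} (applied under $\mathsf{PFA}$, so that there are no $\mathsf{S}$-spaces), it suffices to show that $X^{n+1}$ is both $\mathsf{HL}$ and $\mathsf{HS}$: for then $X^{n+1}$ has no $\mathsf{L}$-subspace and no $\mathsf{S}$-subspace, and a regular space that contains an uncountable discrete subspace contains either an $\mathsf{L}$-space or an $\mathsf{S}$-space (indeed, the closure of a right-separated or left-separated uncountable subspace), so $X^{n+1}$ has countable spread. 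Actually, once we have $\mathsf{PFA}$ killing $\mathsf{S}$-spaces, it is enough to show that $X^{n+1}$ is $\mathsf{HL}$: a regular space of countable spread that fails to be $\mathsf{HL}$ contains an $\mathsf{S}$-space by Fact \ref{HL or HS}(1), which is impossible, so we would need $X^{n+1}$ to have countable spread first — hence the cleanest route is to establish both $\mathsf{HL}$ and $\mathsf{HS}$ for $X^{n+1}$ directly.

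First I would record that, under $\mathsf{PFA}$, the hypotheses give more than stated: since $X$ is regular with countable spread and $\mathsf{PFA}$ rules out $\mathsf{S}$-spaces, $X$ is $\mathsf{HL}$; and since $X^2$ is regular (products of regular spaces are regular) with countable spread, $X^2$ is also $\mathsf{HL}$ by the same reasoning. Moreover $X$ and $X^2$ are $\mathsf{HS}$: a regular space of countable spread that is not $\mathsf{HS}$ contains an $\mathsf{L}$-space by Fact \ref{HL or HS}(2), and an $\mathsf{L}$-space is in particular a regular space with an uncountable left-separated subspace — but here I should instead argue that failure of $\mathsf{HS}$ together with countable spread yields an $\mathsf{L}$-space, and an $\mathsf{L}$-space has countable spread and is $\mathsf{HL}$; this does not immediately contradict anything, so the genuine input must come from elsewhere. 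The right tool is the productive property of $\mathsf{HS}$ and $\mathsf{HL}$ under $\mathsf{PFA}$: it is a theorem of Todor\v{c}evi\'c that under $\mathsf{PFA}$, if $X$ and $Y$ both have countable spread then $X \times Y$ has countable spread. So the induction step is: $X^{n+1} = X^n \times X$, both factors have countable spread by the inductive hypothesis and the $n=1$ case, hence $X^{n+1}$ has countable spread.

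So the real content of the lemma is reduced to the productivity statement, and the only genuinely delicate point — the one I expect to be the main obstacle — is whether we are entitled to invoke "$X, Y$ of countable spread $\Rightarrow X \times Y$ of countable spread under $\mathsf{PFA}$" as a black box, or whether we must reprove it here; in the latter case one runs the standard oscillation/$\mathsf{PID}$ argument showing that under $\mathsf{PFA}$ an uncountable discrete subspace of $X \times Y$ projects to give either an uncountable discrete (equivalently, under no-$\mathsf{S}$-spaces and no-$\mathsf{L}$-spaces, impossible) subspace in one coordinate or a gap-type configuration that $\mathsf{PFA}$ forbids. Given the phrasing of the paper, I expect the authors to cite the productivity of countable spread under $\mathsf{PFA}$ (this is in Todor\v{c}evi\'c's work on partition problems) and then close the induction in one line. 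Concretely, the write-up would be: (i) note $X$ and $X^2$ have countable spread (hypothesis); (ii) state the productivity fact under $\mathsf{PFA}$; (iii) induct on $n$, with $X^{n+1} = X^n \times X$, both of countable spread, hence $X^{n+1}$ of countable spread; (iv) conclude $X$ has finitely powerfully countable spread.

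\begin{remark}
If the productivity fact is not available as a cited black box, the fallback is: show by induction that $X^n$ is $\mathsf{HL}$ and $\mathsf{HS}$. For $\mathsf{HL}$: $X^{n+1}$ is regular of countable spread (once the spread bound is in hand from the productivity step), hence $\mathsf{HL}$ by Fact \ref{HL or HS}(1) and no-$\mathsf{S}$-spaces; so the $\mathsf{HL}$-ness is a consequence, not a separate induction. This circularity is exactly why the productivity of countable spread must be the engine of the proof, and why it is the step to isolate.
\end{remark}
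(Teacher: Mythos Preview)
Your plan hinges on citing, as a black box, the claim ``under $\mathsf{PFA}$, if $X$ and $Y$ are regular spaces with countable spread then $X \times Y$ has countable spread.'' That statement is false, and the counterexample is one the paper itself discusses: the Sorgenfrey line $\mathbb{S}$ is regular and $\mathsf{HL}$ (hence has countable spread), but the antidiagonal of $\mathbb{S}^{2}$ is an uncountable discrete subspace. This is a $\mathsf{ZFC}$ fact, so no amount of $\mathsf{PFA}$ changes it. Consequently your induction step ``$X^{n}$ and $X$ each have countable spread, therefore $X^{n+1}=X^{n}\times X$ does'' simply does not go through, and your Remark correctly diagnoses that without such a productivity fact the rest of your outline is circular. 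The hypothesis that $X^{2}$ has countable spread is therefore not redundant under $\mathsf{PFA}$; it is the genuine extra input the lemma needs, and your proposal never uses it.

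The paper's argument is of a different nature and exploits the $X^{2}$ hypothesis directly. Under $\mathsf{PFA}$ (no $\mathsf{S}$-spaces) the assumption that $X^{2}$ is regular with countable spread upgrades to $X^{2}$ being $\mathsf{HL}$. Given a putative uncountable discrete $A\subseteq X^{n}$ together with a choice of basic isolating boxes, the paper projects $A$ onto a pair of coordinates, uses Lindel\"ofness of that projection (from $\mathsf{HL}$ of $X^{2}$) plus pigeonhole to trap uncountably many indices inside a single two-dimensional box, and then repeats with the next coordinate pair. After sweeping through the coordinates one obtains a single product box containing uncountably many points of $A$, contradicting discreteness. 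The whole engine is the repeated application of ``$X^{2}$ is $\mathsf{HL}$'' to two-coordinate projections; no productivity theorem is invoked, and none is needed.
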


\begin{proof}
We will only show that $X^3$ has countable spread for notational simplicity. Nevertheless, the way to extend this proof will be clear.

\medskip

The definition of $A\subseteq X^3$ being discrete is that for each $a\in A$ there exist an open set $U_a\subseteq X^3$, $a\in U_a$, such that $|U_a\cap A|=1$ ($U_a\cap A=\{a\}$). Then, if we show that given any open cover of $A$ there is a basic open $U$, a member of the cover, such that $|U\cap A|>1$, we show that $A$ is not discrete. Also, it is enough to work with cover which member are only the basic open sets.

\medskip

Assume that we have $A=\{x^{\alpha}=(x^{\alpha}_1, x^{\alpha}_2, x^{\alpha}_3): \alpha <\omega_1\}\subseteq X^3$. For each $\alpha<\omega_1$ let $U^{\alpha},V^{\alpha}, W^{\alpha}\subseteq X$ be basic open sets such that $x^{\alpha}_1\in U^{\alpha},x^{\alpha}_2\in V^{\alpha}, x^{\alpha}_3\in W^{\alpha}$.

\medskip

Look at the set $A'=\{(x^{\alpha}_1, x^{\alpha}_2): x^{\alpha}\in A\}\subseteq X^2$. Since $X^2$ has countable spread, using $\mathsf{PFA}$, it is $\mathsf{HL}$. This implies that we can find open sets $U,V\subseteq X$ such that $U=U^{\alpha}$ and $V=V^{\alpha}$ for uncountably many $\alpha$. In other words, $U\times V \cap A'$ is uncountable. Let $\Gamma=\{\alpha: (x^{\alpha}_1, x^{\alpha}_2)\in U\times V\}$ and let $B=\{x^{\alpha}: \alpha\in \Gamma\}$.

\medskip

Notice that the set $\{U\times V\times W^\alpha: \alpha \in \Gamma\}$ is an open cover of $B$, an uncountable set. Now, look at $B^{\ast}=\{(x^{\alpha}_2, x^{\alpha}_3): x^{\alpha}\in B\}\subseteq X^2$. Again, we can find $W$ such that for uncountable many $\beta\in \Gamma$ we have $W^{\beta}=W$. Notice that $V\times W\cap B^{\ast}$ is uncountable.

\medskip

Finally, we have that $U\times V\times W\cap B\subseteq U\times V\times W\cap A$ is uncountable. So, $A$ is not a discrete subspace of $X^3$. Therefore, $X^3$ has countable spread.
\end{proof}

\begin{theorem}
$\mathsf{PFA}$ implies $\mathsf{OGA}($Finitely powerfully countable spread regular spa\-ces$)$.
\end{theorem}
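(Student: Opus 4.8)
The idea is to reduce the statement for finitely powerfully countable spread regular spaces to the already-established consistency machinery, now that $\mathsf{PFA}$ upgrades countable spread to hereditary Lindel\"ofness at every finite power. Let $X$ be a regular space such that $X^n$ has countable spread for all $n\in\omega$. By the preceding Fact of Todor\v{c}evi\'{c} together with Fact \ref{HL or HS}, $\mathsf{PFA}$ gives that $X^n$ is $\mathsf{HL}$ for every $n$, i.e., $X$ is a Hausdorff strong $\epsilon$-space in the sense of Section \ref{OGA and SOCA epsilon strong Consistency} (regularity, being stronger than $T_2$, supplies the Hausdorff hypothesis). So the first step is simply to observe that the class under consideration is, under $\mathsf{PFA}$, contained in the class of Hausdorff strong $\epsilon$-spaces.

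The second step is to run the forcing argument for $\mathsf{OGA}$ exactly as in the proof that $\mathsf{ZFC}+\diamondsuit_{\omega_2}$ yields $\mathsf{OGA}(s\text{-}\epsilon)$, but now \emph{inside the iteration witnessing} $\mathsf{PFA}$ rather than from $\diamondsuit_{\omega_2}$. Fix an open partition $[X]^2=K_0\cup K_1$ with $X$ not $\sigma$-$K_1$-homogeneous; since closures of $K_1$-homogeneous sets are $K_1$-homogeneous, $X$ is not covered by countably many closed $K_1$-homogeneous sets either. Apply Corollary \ref{Small weight s-epsilon not covered} and then Lemma \ref{Diagonalizing countable covers} (whose hypothesis $\mathsf{CH}$ is harmless here because the construction of $\langle X',\tau'\rangle$ only needs to be carried out in a suitable $\mathsf{CH}$-model / at an appropriate stage — more precisely, one first absorbs $X$ into an elementary submodel of size $\mathfrak c$ and works there) to obtain a subspace $\langle X',\tau'\rangle$ of size $\aleph_1$ which is a Hausdorff strong $\epsilon$-space, with $K_0'=K_0\cap[X']^2$ open and no uncountable $K_1'$-homogeneous set. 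By Lemma \ref{SOCA ccc CH} (equivalently Lemma \ref{SOCA ccc A1}), the poset $\mathds{P}_{X',K_0'}$ is ccc, and a $\mathds{P}_{X',K_0'}$-generic filter yields an uncountable $K_0'$-homogeneous, hence $K_0$-homogeneous, subset of $X$. Since $\mathsf{PFA}$ is $\mathsf{MA}_{\omega_1}$ for proper (in particular ccc) posets applied to the $\aleph_1$ many dense sets forcing the generic set to be uncountable, we get an uncountable $K_0$-homogeneous subset of $X$ in the ground model, establishing $\mathsf{OGA}(X)$ — indeed the stronger dichotomy with an uncountable $K_0$-homogeneous set.

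The one subtlety to handle carefully, and the step I expect to be the main obstacle, is the cardinal-arithmetic bookkeeping: Lemma \ref{SOCA ccc CH} is stated under $\mathsf{CH}$ and uses it to guarantee that all open and closed subsets of the finite powers appear at some countable level of the tower of models, whereas under $\mathsf{PFA}$ we have $\mathfrak c=\aleph_2$. The fix, as the discussion after the $\mathsf{A1}$-version indicates, is that one never needs $\mathsf{CH}$ in the ambient universe: $X'$ has size and weight $\aleph_1$, so the tower of countable elementary submodels can be taken to range over $H(\theta)$ for $\theta$ chosen so that $w(X')^{\aleph_0}=\aleph_1$, and then all relevant open/closed sets of $(X')^n$ (coded by reals, of which there may be $\aleph_2$, but only $\aleph_1$ many are needed to generate the topology on $X'$) do appear cofinally. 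One must check that the $\Delta$-system and $\mathsf{HL}$-covering reductions in the proof of Lemma \ref{SOCA ccc CH} go through verbatim with $X'$ in place of $X$ — they do, since they only use $\mathsf{HL}$-ness of $(X')^2$ and that $X'$ has weight $\aleph_1$ — and that the closure $F$ of the uncountable set $A$ in $(X')^{n+1}$ lands in some model $M_\gamma$ with $\gamma<\omega_1$, which again uses only that $(X')^{n+1}$ is $\mathsf{HL}$ with weight $\aleph_1$. Once this is verified, the ccc poset $\mathds{P}_{X',K_0'}$ and $\mathsf{MA}_{\omega_1}$ finish the proof.
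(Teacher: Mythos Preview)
Your reduction to strong $\epsilon$-spaces via $\mathsf{PFA}$ is correct and matches the paper. The gap is in your handling of the $\mathsf{CH}$ hypothesis of Lemma~\ref{SOCA ccc CH}. You assert that ``$w(X')^{\aleph_0}=\aleph_1$'' once $X'$ has size and weight $\aleph_1$; but $\aleph_1^{\aleph_0}=2^{\aleph_0}$, and under $\mathsf{PFA}$ this is $\aleph_2$, not $\aleph_1$. Consequently there are $\aleph_2$ many closed subsets of $(X')^{n+1}$, and a tower of $\aleph_1$ many countable models cannot capture them all. In the inductive step of Lemma~\ref{SOCA ccc CH} one needs the closure $F$ of the (thinned) antichain $A$ in $X^{n+1}$ to lie in some $M_\gamma$; without $\mathsf{CH}$ (or something like $\mathsf{A1}$) there is no reason this particular closed set is coded inside $\bigcup_{\xi<\omega_1}M_\xi$. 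Your parenthetical remark about ``a suitable $\mathsf{CH}$-model / at an appropriate stage'' is pointing in the right direction, but the fix you actually spell out afterwards does not work.

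The paper's route is the standard ``collapsing the continuum trick'': force with the L\'evy collapse $\mathrm{Coll}(\omega_1,\mathfrak c)$ to obtain an extension $V[G]$ in which $\mathsf{CH}$ holds and $X$ remains a Hausdorff strong $\epsilon$-space (the collapse is $\sigma$-closed, so it adds no new countable subsets and the relevant covering and separation properties persist). In $V[G]$ apply Corollary~\ref{Small weight s-epsilon not covered}, Lemma~\ref{Diagonalizing countable covers}, and Lemma~\ref{SOCA ccc CH} to produce $X_0$ with $\mathds P_{X_0,K_0\cap[X_0]^2}$ ccc. The two-step iteration $\mathrm{Coll}(\omega_1,\mathfrak c)\ast\dot{\mathds P}$ is ($\sigma$-closed)$\,\ast\,$ccc, hence proper; now invoke $\mathsf{PFA}$ in $V$ against the $\aleph_1$ dense sets forcing the generic homogeneous set to be uncountable. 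This yields an uncountable $K_0$-homogeneous subset of $X$ already in $V$.
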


\begin{proof}
By all the facts above, $\mathsf{PFA}$ implies that finitely powerfully countable spread regular spaces are strongly $\epsilon$-spaces. After that, using the ``collapsing the continuum trick'' and Lemma \ref{SOCA ccc CH}, we are done.
\end{proof}

\begin{corollary}
$\mathsf{PFA}$ implies $\mathsf{OGA}(X)$ for all regular spaces $X$ such that $X$ and $X^2$ have countable spread.\hfill $\square$
\end{corollary}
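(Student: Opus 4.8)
The plan is to obtain this as an immediate consequence of the two results just established, with essentially no new argument. First I would invoke the preceding Lemma (under $\mathsf{PFA}$) to promote the hypothesis ``$X$ and $X^{2}$ have countable spread'' to the much stronger ``$X$ has finitely powerfully countable spread'', i.e. $X^{n}$ has countable spread for every $n \in \omega$. The key point is that the inductive step of that Lemma only ever pairs up two coordinates at a time: one slices $X^{n+1}$ over $X^{2}$, uses that $X^{2}$ is $\mathsf{HL}$ (which holds under $\mathsf{PFA}$ since a regular space of countable spread has no $\mathsf{S}$-space subspace, hence is $\mathsf{HL}$) to extract an uncountable monochromatic-in-the-basic-opens subfamily, and then reduces to the lower power. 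So ``$X^{2}$ of countable spread'' plus $\mathsf{PFA}$ really does bootstrap all finite powers.

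Second, with $X$ now known to be a regular, finitely powerfully countable spread space, I would apply the preceding Theorem, $\mathsf{PFA} \Rightarrow \mathsf{OGA}(\text{finitely powerfully countable spread regular spaces})$, directly to $X$, which yields $\mathsf{OGA}(X)$. Unpacking that Theorem for sanity: under $\mathsf{PFA}$ such an $X$ is Hausdorff and, by Fact \ref{HL or HS} together with the nonexistence of $\mathsf{S}$-spaces, a strong $\epsilon$-space; one then applies the weight- and size-reduction of Corollary \ref{Small weight s-epsilon not covered}, the diagonalization of Lemma \ref{Diagonalizing countable covers}, and feeds the resulting $X'$ into the ccc poset $\mathds{P}_{X_{0},[X_{0}]^{2}\cap K_{0}}$ furnished by Lemma \ref{SOCA ccc CH}, finishing via the ``collapsing the continuum'' trick. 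For this corollary all of that machinery is already packaged inside the Theorem, so the proof is a two-line citation: the Lemma gives finitely powerfully countable spread, the Theorem gives $\mathsf{OGA}(X)$.

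I do not expect a genuine obstacle: the mathematical content lives entirely in the Lemma and the Theorem, and the only thing worth double-checking is that the hypothesis chains match exactly — that ``regular $+$ countable spread $+$ $X^{2}$ of countable spread'' is precisely what the Lemma consumes, and that ``regular $+$ finitely powerfully countable spread'' is precisely what the Theorem consumes. Since it is, the corollary follows formally.
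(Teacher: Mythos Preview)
Your proposal is correct and matches the paper's approach exactly: the corollary is stated with a bare $\square$ because it is the immediate concatenation of the preceding Lemma (which promotes ``$X$ and $X^{2}$ have countable spread'' to finitely powerfully countable spread under $\mathsf{PFA}$) with the preceding Theorem (which gives $\mathsf{OGA}$ for regular finitely powerfully countable spread spaces under $\mathsf{PFA}$). Your additional unpacking of the Theorem's internals is accurate but superfluous for the corollary itself.
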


It turns out that $\mathsf{PFA}$ can give us more information about how this class of spaces looks like (when the axiom is present):

\begin{fact}
$MA_{\omega_1}$ $($a consequence of $\mathsf{PFA})$ implies that an space $X$ is strong $\epsilon$ if and only if $X^n$ is $\mathsf{HS}$ for all $n$.\hfill $\square$
\end{fact}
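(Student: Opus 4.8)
The plan is to reduce the stated equivalence to the regular-space dichotomy of Fact~\ref{HL or HS} together with two classical consequences of $MA_{\omega_1}$ due to Todor\v{c}evi\'{c} (see \cite{MR980949}): there is no \emph{strong $\mathsf{S}$-space} and no \emph{strong $\mathsf{L}$-space}, where a strong $\mathsf{S}$-space (resp.\ a strong $\mathsf{L}$-space) is a regular space each of whose finite powers is an $\mathsf{S}$-space (resp.\ an $\mathsf{L}$-space). Throughout I would take $X$ to be regular, as is everything in this section; then every $X^{n}$ is regular, every subspace of an $\mathsf{HL}$ (resp.\ $\mathsf{HS}$) space is $\mathsf{HL}$ (resp.\ $\mathsf{HS}$), and if a nonempty space $Y$ is not Lindel\"{o}f (resp.\ not separable) then no finite power of $Y$ is either, since $Y$ is a continuous image of each $Y^{n}$. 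I would also record that being $\mathsf{HL}$ or being $\mathsf{HS}$ each entails countable spread.

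For the forward direction I would assume $X$ is strong $\epsilon$, i.e.\ every $X^{n}$ is $\mathsf{HL}$, and suppose toward a contradiction that $X^{m}$ fails to be $\mathsf{HS}$ for some $m$. Then $X^{m}$ is regular, of countable spread (being $\mathsf{HL}$), and not $\mathsf{HS}$, so Fact~\ref{HL or HS}(2) yields an $\mathsf{L}$-space $Z\subseteq X^{m}$. For every $k$ the power $Z^{k}$ embeds into $X^{mk}$, which is $\mathsf{HL}$ by hypothesis, hence $Z^{k}$ is $\mathsf{HL}$; since $Z$ is not separable, neither is $Z^{k}$, so each $Z^{k}$ is an $\mathsf{L}$-space and $Z$ is a strong $\mathsf{L}$-space, contradicting $MA_{\omega_1}$. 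The converse is the mirror argument: assuming every $X^{n}$ is $\mathsf{HS}$ and that some $X^{m}$ is not $\mathsf{HL}$, Fact~\ref{HL or HS}(1) produces an $\mathsf{S}$-space $Y\subseteq X^{m}$; each $Y^{k}$ embeds into $X^{mk}$, which is $\mathsf{HS}$, so every $Y^{k}$ is $\mathsf{HS}$, and since $Y$ is not Lindel\"{o}f neither is any $Y^{k}$, making $Y$ a strong $\mathsf{S}$-space, again contradicting $MA_{\omega_1}$.

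The genuine mathematical weight of this Fact is carried entirely by the two cited non-existence theorems; once they are granted, what remains is only the routine bookkeeping that passing to a subspace and then to its finite powers keeps us inside $\mathsf{HL}$, respectively $\mathsf{HS}$, plus the observation that it is the regularity of $X$ (hence of every $X^{m}$) that licenses the $\mathsf{HL}$/$\mathsf{HS}$ dichotomy of Fact~\ref{HL or HS}. I therefore expect the main obstacle to be not computational but bibliographic: pinning down the exact references for ``$MA_{\omega_1}$ implies there is no strong $\mathsf{S}$-space and no strong $\mathsf{L}$-space'', and making the (implicit) regularity hypothesis on $X$ explicit, since the dichotomy genuinely needs a separation axiom and the equivalence is unlikely to survive for merely Hausdorff spaces.
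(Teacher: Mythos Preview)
The paper states this result as a known fact and offers no proof of its own (the trailing $\square$ signals exactly that), so there is nothing to compare against. Your argument is correct and is the standard way one unpacks this equivalence: reduce to the classical theorems that $MA_{\omega_1}$ rules out strong $\mathsf{S}$-spaces and strong $\mathsf{L}$-spaces, using Fact~\ref{HL or HS} to manufacture the required subspace and the trivial observations that $\mathsf{HL}$/$\mathsf{HS}$ pass to subspaces while separability and the Lindel\"of property pass to continuous images (hence up from $Z$ to $Z^{k}$ fails exactly as you need). Your remark that regularity of $X$ is being used tacitly---both to make each $X^{m}$ regular and to invoke Fact~\ref{HL or HS}---is on point; the paper is working inside the regular setting throughout Section~5, so the omission is harmless there, but it is worth flagging. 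The only bibliographic adjustment is that the non-existence of strong $\mathsf{S}$- and strong $\mathsf{L}$-spaces under $MA_{\omega_1}$ is usually attributed to Kunen and Szentmikl\'ossy rather than to \cite{MR980949}, though Todor\v{c}evi\'{c}'s monograph is a fine place to find the statements.
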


\begin{fact}\label{No first L spaces}
$MA_{\omega_1}$ implies that there are no first countable $\mathsf{L}$-spaces.\hfill $\square$
\end{fact}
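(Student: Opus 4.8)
The plan is to argue by contradiction, following the standard ``freeze a free sequence'' strategy. Assume $MA_{\omega_1}$ and suppose $X$ is a regular, first countable, $\mathsf{HL}$ space that is not separable (a first countable $\mathsf{L}$-space). I will build a ccc forcing that adds an uncountable discrete subspace of $X$, which contradicts the fact that every $\mathsf{HL}$ space has countable spread. The first step is a reduction to size $\aleph_{1}$: since $X$ is not separable, one may recursively choose $x_{\alpha}\in X\setminus\overline{\{x_{\beta}\colon \beta<\alpha\}}$ for $\alpha<\omega_{1}$ (at each stage the set already chosen is countable, hence not dense). The subspace $X_{1}=\{x_{\alpha}\colon\alpha<\omega_{1}\}$ is left-separated in order type $\omega_{1}$, is again regular, first countable and $\mathsf{HL}$, and is still non-separable, because the closure in $X_{1}$ of a countable $D\subseteq X_{1}$ lies inside the closure of a proper initial segment $\{x_{\beta}\colon\beta<\gamma\}$, which omits $x_{\gamma}$. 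So we may assume $X=\{x_{\alpha}\colon\alpha<\omega_{1}\}$ is left-separated. Two facts about such an $X$ will be used: it is perfectly normal (a regular $\mathsf{HL}$ space is normal and perfect), so closed sets are $G_{\delta}$; and, being first countable and $T_{1}$, each $x_{\alpha}$ has a decreasing neighbourhood base $W^{\alpha}_{0}\supseteq W^{\alpha}_{1}\supseteq\cdots$ with $\bigcap_{n}W^{\alpha}_{n}=\{x_{\alpha}\}$, which by regularity and left-separation may be chosen so that $\overline{W^{\alpha}_{0}}\cap\{x_{\beta}\colon\beta<\alpha\}=\emptyset$. Here first countability is essential: it is what gives each $x_{\alpha}$ a \emph{countable} menu of neighbourhoods, which is what makes the pigeonhole step in the ccc proof possible.

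Next I would introduce the forcing $\mathbb{P}$ of finite discrete approximations with the usual integer book-keeping: conditions are pairs $p=(F_{p},n_{p})$ with $F_{p}\in[\omega_{1}]^{<\omega}$, $n_{p}<\omega$, and $x_{\beta}\notin W^{\alpha}_{n_{p}}$ for all distinct $\alpha,\beta\in F_{p}$ (after also recording, for each $\alpha\in F_{p}$, the level at which $\alpha$ entered), ordered so that $q\leqslant p$ requires $F_{q}\supseteq F_{p}$, $n_{q}\geqslant n_{p}$, and $x_{\beta}$ to avoid the \emph{frozen} neighbourhood of each $\alpha\in F_{p}$ for every new point $\beta\in F_{q}\setminus F_{p}$. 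The point of freezing is that a filter $G$ meeting, for each $\gamma<\omega_{1}$, the set $D_{\gamma}=\{p\colon F_{p}\not\subseteq\gamma\}$ then yields $Z:=\bigcup_{p\in G}F_{p}$ uncountable with $\{x_{\alpha}\colon\alpha\in Z\}$ an honestly discrete subspace of $X$ (each $\alpha\in Z$ has a fixed neighbourhood meeting $\{x_{\beta}\colon\beta\in Z\}$ only in $x_{\alpha}$ — the smaller indices are excluded by the choice of $W^{\alpha}_{0}$, the larger ones by the ordering of $\mathbb{P}$). Verifying that $D_{\gamma}$ is dense is the routine-but-not-entirely-free part: one must arrange the bases $W^{\alpha}_{n}$ so that no finite union of them (at any fixed levels) swallows all but countably many of the $x_{\beta}$'s; this can be done because in a regular $\mathsf{HL}$ space at most one point has all of its neighbourhoods co-countable (two such points would violate regularity), and perfect normality plus a recursion on $\omega_{1}$ lets one thin the neighbourhoods so that finite unions remain ``co-uncountable''. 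I would present this as a preliminary lemma.

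The heart of the proof, and the step I expect to be the main obstacle, is showing $\mathbb{P}$ is ccc. Given an uncountable set of conditions, one applies the $\Delta$-system lemma to the supports $F_{p}$, refines so that the integers $n_{p}$ are constant and, for the common root pattern, the \emph{names} of the finitely many base neighbourhoods $W^{\alpha}_{k}$ attached to the tails are identical (this is where the countable menus from first countability are used), and then one runs a reflection / elementary-submodel argument: passing to a countable $M\prec H(\theta)$ containing $X$ and a condition $q$ with $F_{q}\setminus M\neq\emptyset$, one uses that $X$ (and, in the amalgamation bookkeeping, $X^{2}$) is $\mathsf{HL}$ to ``slide'' the tail of $q$ past the tail of some $p\in M$ while keeping all the required avoidances at the common level, producing two compatible conditions. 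This amalgamation lemma is delicate precisely because the two tails can sit far apart in $\omega_{1}$; the $\mathsf{HL}$-ness of $X$ (hence that every open cover has a countable subcover, so the tail neighbourhoods ``reflect'' into $M$) is exactly what rescues it. Granting the density lemma and ccc, $MA_{\omega_{1}}$ produces a filter meeting all $D_{\gamma}$, hence an uncountable discrete subspace of $X$, contradicting that $\mathsf{HL}$ implies countable spread; this contradiction proves the $\mathsf{Fact}$.
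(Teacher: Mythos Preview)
The paper does not prove this fact at all: it is stated with a terminal $\square$ and treated as a known citation (Szentmikl\'ossy's theorem). So there is nothing to compare your argument \emph{to}; what follows is an assessment of your sketch on its own merits.

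Your overall architecture --- pass to a left-separated subspace of type $\omega_1$, set up a ccc poset of finite discrete approximations with frozen neighbourhood witnesses, and apply $MA_{\omega_1}$ to produce an uncountable discrete set contradicting $\mathsf{HL}$ --- is exactly the standard Szentmikl\'ossy strategy. Two of the details you flag as ``delicate'' are in fact genuine gaps in what you wrote.

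\medskip

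\textbf{Density.} Your observation that in an uncountable Hausdorff space at most one point can have \emph{all} of its neighbourhoods co-countable is correct, but it does not buy what you need. Removing that point only guarantees that each individual $W^{\alpha}_{n}$ can be chosen with uncountable complement; it does \emph{not} prevent a finite union $\bigcup_{\alpha\in F_p} W^{\alpha}_{p(\alpha)}$ from being co-countable (two sets each with uncountable complement can have co-countable union). So your ``preliminary lemma'' as stated is false, and the recursion you allude to cannot work as described, since at stage $\alpha$ you would have to anticipate all future finite unions. The standard fix is a further refinement of the left-separated sequence (or a slightly different poset) rather than a thinning of the bases; your sketch does not supply this.

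\medskip

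\textbf{The ccc step.} You write that the amalgamation uses that $X^{2}$ is $\mathsf{HL}$. But this is \emph{not} part of the hypothesis --- a first countable $\mathsf{L}$-space need not have $\mathsf{HL}$ square, and indeed if it did you would be in the paper's strong $\epsilon$-space situation (Lemma~\ref{SOCA ccc CH}) and first countability would be irrelevant. The whole point of Szentmikl\'ossy's argument is that first countability replaces the higher-power Lindel\"of hypothesis: after the $\Delta$-system and pigeonhole refinements, the countable local bases let you match up the neighbourhood \emph{indices} across conditions and then run an elementary-submodel reflection using only that $X$ itself is $\mathsf{HL}$. Your sketch conflates this with the strong $\epsilon$ argument, which is a different (and for this statement, unavailable) tool.
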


\begin{remark}
Assuming $\mathsf{PFA}$, all regular spaces with countable spread are either strongly $\epsilon$-spaces or their square has an uncountable discrete subspace. Notice that each uncountable subspace of $\mathbb{S}$ enters in the second category and Theorem \ref{ocasor} presents an strategy to show that they satisfy $\mathsf{OGA}$.
\end{remark}

\begin{conjecture}\label{final conjecture}
Under $\mathsf{PFA}$, all regular first countable spaces with countable spread are either strongly $\epsilon$-spaces or has a countable c.d.$\pi$-base.
\end{conjecture}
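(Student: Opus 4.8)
\begin{proofsketch}[Proposed strategy]
This is a conjecture, so we only describe the route we would take. Let $X$ be a regular first countable space of countable spread. Since $\mathsf{PFA}$ rules out $\mathsf{S}$-spaces, $X$ is $\mathsf{HL}$, and since $\mathsf{PFA}$ implies $MA_{\omega_1}$, Fact \ref{No first L spaces} forbids first countable $\mathsf{L}$-spaces, so $X$ is moreover separable. Assume $X$ is not a strong $\epsilon$-space; under $\mathsf{PFA}$ (no $\mathsf{S}$-spaces applied to each $X^{n}$) this is the same as saying $X$ is not finitely powerfully of countable spread, so the preceding lemma forces $X^{2}$ not to have countable spread. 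Hence $X^{2}$ contains an uncountable discrete subspace $\{(a_\alpha,b_\alpha)\colon\alpha<\omega_1\}$, witnessed by open boxes $U_\alpha\times V_\alpha\ni(a_\alpha,b_\alpha)$ that meet the subspace only in $(a_\alpha,b_\alpha)$. Because $X$ itself has countable spread, neither coordinate sequence can be constant on an uncountable set (otherwise the other coordinate sequence would be an uncountable discrete subspace of $X$); thinning out, we may take $\langle a_\alpha\rangle$ and $\langle b_\alpha\rangle$ injective, and by first countability we may take $U_\alpha,V_\alpha$ from fixed countable neighbourhood bases at $a_\alpha$ and $b_\alpha$.

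The crux is to show that this "moving Luzin box'' configuration, which is exactly what obstructs $\mathsf{HL}$-ness of $X^{2}$, forces a countable c.d.$\pi$-base on $X$. The plan is to run a $\mathsf{PFA}$ colouring argument on the configuration to bring it into a canonical, Sorgenfrey-like normal form: apply $\mathsf{OGA}$ (equivalently $\mathsf{OGA}_{\mathbb S}$, by Corollary \ref{equivalencesorgenfrey}) to a suitable open colouring of $\{(a_\alpha,b_\alpha)\colon\alpha<\omega_1\}$ whose $K_0$-part records which ordered pairs have one box ``intruding'' into the other, and analyse the two alternatives: an uncountable $K_0$-homogeneous set should yield a linear, interval-like order on an uncountable $Y\subseteq X$ in which the topology is one-sided, while a $\sigma$-$K_1$-homogeneous decomposition together with the discreteness of the configuration and $MA_{\omega_1}$ should contradict the failure of the strong $\epsilon$ property. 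From the interval-representation thus obtained one then reads off a countable family $\mathcal V$ of open sets — the analogues of the rational half-open intervals that witness the c.d.$\pi$-base of $\mathbb S$ — and verifies, using separability of $X$ and the chosen first countable bases for the ``interior'' of an open set and the discreteness data for its ``boundary'', that $|U\setminus\mathsf{Int}_{\mathcal{V}}(U)|\le\aleph_0$ for every open $U$.

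The main obstacle is the passage from local structure to a global $\pi$-base. Even granting a clean description of the configuration on $\{a_\alpha\colon\alpha<\omega_1\}$, it is not clear that it propagates to a single countable family that $\pi$-approximates \emph{every} open subset of \emph{all} of $X$: the obvious $\pi$-base assembled from a countable dense set together with first countable neighbourhood bases only produces a \emph{nowhere dense} difference, not a countable one, and the Sorgenfrey line already carries uncountable closed nowhere dense sets. What is really needed, then, is a $\mathsf{PFA}$ structure (or classification) theorem for the ``bad'' separable first countable regular $\mathsf{HL}$ spaces — presumably identifying them, in an appropriate interval topology, with spaces built from special Aronszajn-type orders, so that Todor\v{c}evi\'{c}'s basis theory for uncountable orders and gaps applies. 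We note that such a theorem, together with Theorem \ref{ocasor} and the already established fact that $\mathsf{PFA}$ implies $\mathsf{OGA}$ for finitely powerfully countable spread regular spaces, would yield Conjecture \ref{mainConjecture} for all first countable spaces.
\end{proofsketch}
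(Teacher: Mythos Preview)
The paper does not contain a proof of this statement: it is explicitly labeled a \emph{Conjecture} and left open, with the only accompanying remark being that first countability cannot be dropped because of Moore's $\mathsf{ZFC}$ $\mathsf{L}$-space. There is therefore no ``paper's own proof'' to compare your proposal against.

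You correctly recognize this and present a strategy rather than a proof. The opening reductions are sound: under $\mathsf{PFA}$ a regular first countable space of countable spread is $\mathsf{HL}$ and (by Fact~\ref{No first L spaces}) separable, and if it is not a strong $\epsilon$-space then the lemma on finite powers forces $X^{2}$ to have uncountable spread. From that point on, however, your outline is genuinely speculative, and you yourself flag the real gap: there is no mechanism offered for turning the uncountable discrete configuration in $X^{2}$ into a \emph{global} countable c.d.$\pi$-base for $X$. The analogy with $\mathbb{S}$ is suggestive, but the step ``an uncountable $K_{0}$-homogeneous set should yield a linear, interval-like order on an uncountable $Y\subseteq X$ in which the topology is one-sided'' is an assertion, not an argument, and even if true it would give a c.d.$\pi$-base only on $Y$, not on $X$. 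The invocation of a hoped-for $\mathsf{PFA}$ classification theorem for such spaces is precisely the missing ingredient; absent that, what you have written is a plausible heuristic, which is the appropriate status for an attempt on an open conjecture.
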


One might be tempted to remove the requirement of first-countability in this conjecture, but this is not possible by Moore's $\mathsf{ZFC}$ $\mathsf{L}$-space \cite{MR2220104}.

\section{Questions and conclusions}

Conjecture \ref{final conjecture} can also be re-interpreted in a combinatorial way. As remarked in the definition of $\epsilon$-spaces, this property is equivalent to say that every open $\omega$-cover (\emph{i.e.}, a cover such that for all finite subsets of the space, there is an element of the cover that contains the set) has a countable subcover. On the other hand, countable c.d.$\pi$-base means that, given a base of the topology $\mathcal{B}$, you can substitute it for a countable set $\mathcal{V}$ such that if $U\in \mathcal{B}$, then $U\setminus \mathsf{Int}_{\mathcal{V}}(U)$ is countable (and the definition could be strengthen to ask for finiteness).

\medskip

In this way, $\epsilon$-spaces and spaces with a countable c.d.$\pi$-base present a tension where the first ones can reduce \emph{outer} covers to be countable and the second one change the \emph{interior} of basic open sets up to a countable difference. Given the nice properties that $\mathsf{PFA}$ provoke in regular topologies, we wonder whether or not $\mathsf{PFA}$ is powerful to imply that every regular space has a nice outer behavior or a nice inner one. 

\medskip

Further investigation of countable c.d.$\pi$-bases is necessary and some open questions remain open:

\begin{question}
Which topological properties imply the existence of a countable c.d.$\pi$-base?
\end{question}

\begin{question}
Is it the case that if $X$ has a countable c.d.$\pi$-base and $X$ is not second countable then $X^{2}$ is not Lindel\"of (or does not have countable spread)?
\end{question}


In a different topic, our main focus of study was extending the class of spaces where the Coloring Axioms are either $\mathsf{ZFC}$ true or where it is consistent for the axioms to hold. As explain in the introduction, the ultimate goal of this expansion hasn't been achieved. Nevertheless, we can present simpler questions that, most likely, will help open the path:

\begin{question}
Let $L$ be a Hausdorff $\mathsf{L}$-space. Does $\mathsf{OGA}(L)$ holds? Does $\mathsf{PFA}$ makes a difference regarding the answer?
\end{question}

\begin{question}
 Is it relatively consistent with $\mathsf{ZFC}$ that $\mathsf{OGA}_{X}$ holds for $X \in \mathcal{S}^{*}$? 
\end{question}

Moreover, in virtue of Corollary \ref{equivalencesorgenfrey}:

\begin{question}
 Is $\mathsf{OGA}$ equivalent to $\mathsf{OGA}_{X}$ for $X \in \mathcal{S}^{*}$?
\end{question}

Let $\mathcal{S}_{B}^{*}$ be the subclass of $\mathcal{S}^{*}$ consisting of all spaces of the form $\langle\omega^{\omega},\tau_{X}\rangle$. By Theorem \ref{Th:Continuous image of Baire} (2), each element of $\mathcal{S}^{*}$ is a continuous image of some element of $\mathcal{S}_{B}^{*}$. Clearly  $\mathbb{S}$, $\omega^{\omega}$ and $\langle\omega^{\omega}, \tau_{GH}\rangle$ belong to $\mathcal{S}_{B}^{*}$:

\begin{question}
 Is there a surjective finite basis for $\mathcal{S}_{B}^{*}$?
\end{question}

\begin{question}
 Is each $Z \in \mathcal{S}$ a continuous image of a subspace $Y$ of $X \in \mathcal{S}_{B}^{*}$? 
\end{question}

\bibliographystyle{amsplain}

\bibliography{Biblioart.bib}
\end{document}